\documentclass[11pt]{article}
\usepackage[T1]{fontenc}
\usepackage{amsmath,amssymb,amsthm,mathtools}
\usepackage[margin=1.15in]{geometry}
\usepackage{hyperref}
\hypersetup{colorlinks=true,linkcolor=blue,citecolor=blue,urlcolor=blue}
\numberwithin{equation}{section}
\newtheorem{theorem}{Theorem}[section]
\newtheorem{proposition}[theorem]{Proposition}
\newtheorem{lemma}[theorem]{Lemma}
\newtheorem{corollary}[theorem]{Corollary}
\theoremstyle{definition}
\newtheorem{definition}[theorem]{Definition}
\newtheorem{question}{Question}[section]
\theoremstyle{remark}
\newtheorem{remark}[theorem]{Remark}
\newcommand{\Sn}{S^{n-1}}
\newcommand{\surf}{S_{n-1}}
\newcommand{\Fq}{F_q^{(a)}}
\newcommand{\Vt}{\widetilde V}
\newcommand{\Haus}{\mathcal H}
\newcommand{\relint}{\operatorname{relint}}
\newcommand{\interior}{\operatorname{int}}

\newcommand{\conv}{\operatorname{conv}}
\newcommand{\ind}{\mathbf 1}

\title{Stationarity equations for fractional affine area measures:\\ellipsoid rigidity and polytopal solutions}
\author{Xingyu Zhang\thanks{School of Mathematics, Center for Applied Mathematics of Guangxi (Guangxi University), Guangxi University, Nanning, Guangxi, 530004, P. R. China.}
\and Dong Qiu\thanks{School of Mathematics, Center for Applied Mathematics of Guangxi (Guangxi University), Guangxi University, Nanning, Guangxi, 530004, P. R. China. Corresponding author. E-mail: \texttt{qiudong@gxu.edu.cn}.}}
\date{}

\begin{document}
\maketitle

\begin{abstract}
Xi and Zhao established a first variation formula for affine inequalities involving radial mean bodies, and asked two geometric questions about the resulting stationarity equation: whether the bodies already known to satisfy it are the only ones, and whether its smooth solutions are exactly the ellipsoids.
We answer both.
Every strictly convex solution of the equation with $q>-1$, $q\ne0,n$ is an ellipsoid, which answers the second question affirmatively while replacing smoothness and everywhere positive Gauss curvature by strict convexity.
Every full-dimensional facet-transitive polytope satisfies the equation, and so does the polar of every vertex-transitive polytope with respect to its volume centroid; the rhombic dodecahedron, the thirteen Catalan solids and the bipyramids over regular polygons other than the octahedron provide infinitely many affinely inequivalent solutions that are not affine images of regular polytopes or balls, so the answer to the first question is negative.
The rigidity proof compares overlap integrals along a continuous Steiner path: stationarity forces an increase of order $o(t)$, whereas a nonaffine chord midpoint function forces an increase of order $t$.
For negative parameters the required comparison rests on a minimization formula over bounded measurable auxiliary sets.
\end{abstract}

\noindent\textbf{2020 Mathematics Subject Classification:} Primary 52A20; Secondary 52A40.

\noindent\textbf{Keywords:} fractional affine area measures, stationary convex bodies, ellipsoids, continuous Steiner symmetrization, facet-transitive polytopes.

\section{Introduction}\label{sec_introduction}

For nonzero parameters $-1<p<q$, a basic extremal problem is to determine the sharp upper bound for the ratio $V(R_pK)/V(R_qK)$ over convex bodies $K$ and to characterize equality.
Here $R_qK$ is the radial mean body of $K$: its radial function is, up to a normalizing factor, the $L^q$ mean over all points $x\in K$ of the radial functions of $K$ viewed from~$x$ (see Definition~\ref{def_chord_body}).
Thus $V(R_pK)/V(R_qK)$ measures how the volume of these averaged bodies depends on the averaging exponent.
The ellipsoid maximization conjecture predicts that $V(R_pK)/V(R_qK)\leq V(R_pB)/V(R_qB)$, with equality precisely for ellipsoids, where $B$ is the Euclidean unit ball~\cite[p.~35]{XiZhao}.
This problem continues the study of ellipsoidal extremizers in classical affine isoperimetric theory.
The radial mean bodies introduced by Gardner and Zhang~\cite{GardnerZhang} connect the polar projection body and the difference body through limiting cases; their volume inequalities include the Petty projection, Zhang projection, and Rogers--Shephard inequalities.
Haddad and Ludwig~\cite{HaddadLudwig} established the ellipsoid maximization result in the range $-1<p\leq n\leq q$.
Xi and Zhao~\cite[pp.~7, 35]{XiZhao} identified the general upper bound problem, when both parameters lie on the same side of $n$, as a direction for further study.

Xi and Zhao~\cite[Theorem~1.1 and Section~6]{XiZhao} introduced a variational approach to this extremal problem.
They proved a first variation formula for the fractional integral affine surface area $\Phi_q$ under continuous Wulff perturbations and represented its derivative by a fractional affine area measure, obtaining a measure equation satisfied by extremizers.
Here $\Fq(K,\cdot)$ is the measure on the sphere obtained by differentiating $\Phi_q$ along these perturbations; it is the counterpart, for $\Phi_q$, of the surface area measure $\surf(K,\cdot)$ for the volume.
One can therefore begin by classifying stationary bodies and then compare their functional values and extremal properties.
Since every extremizer is stationary, any body that fails the equation is not extremal, while every solution is a candidate; this is what makes the classification of solutions useful.
They observed on pages~7 and~37 that such a classification may give alternative proofs of known sharp inequalities and a route to upper bounds for general parameters.
When one parameter equals the dimension $n$, the identity $V(R_nK)=V(K)$ reduces the problem to the ratio $V(R_qK)/V(K)$, whose extremizers satisfy~\cite[Corollary~6.3]{XiZhao}
\begin{equation}\label{eq_proportional_intro}
F_q^{(a)}(K,\cdot)=c\,S_{n-1}(K,\cdot)
\end{equation}
where $c>0$ and $S_{n-1}(K,\cdot)$ is the usual surface area measure.
We study the solutions of this equation for $q>-1$, $q\ne0$.
The case $q=n$ is degenerate: the ratio is then constant, $V(R_nK)/V(K)=1$, and every convex body satisfies the proportionality equation (Proposition~\ref{prop_degenerate_index}).
Proposition~\ref{prop_stationary} identifies the equation, in the nondegenerate range, with Wulff stationarity of a normalized fractional affine functional.
The basic definitions and normalization conventions are given in Section~\ref{sec_preliminaries}.

The first step is to understand how strongly the proportionality equation constrains the shape of a convex body.
Xi and Zhao~\cite[discussion after Corollary~6.3]{XiZhao} noted that ellipsoids and simplices satisfy the equation and observed the same property for regular polytopes and their $SL(n)$ images.
They then posed the following two questions.

\begin{question}[{Xi and Zhao~\cite[p.~36]{XiZhao}}]\label{q_polytopes}
Are regular polytopes, balls, and their $SL(n)$ images the only convex bodies whose surface area measure and $p$-th fractional affine area measure are proportional to each other?
\end{question}

In other words, do regular polytopes, balls, and their $SL(n)$ images exhaust the solutions of~\eqref{eq_proportional_intro}?
Here regularity is understood in the Euclidean sense; in dimension three, the regular polytopes are the five Platonic solids~\cite{Coxeter}.
The parameter $p$ in Question~\ref{q_polytopes} corresponds to our $q$.
We refer to Question~\ref{q_polytopes} as the \emph{polytope question}.

\begin{question}[{Xi and Zhao~\cite[p.~37]{XiZhao}}]\label{q_smooth}
Are ellipsoids the only convex bodies with $C^2$ boundary and everywhere positive Gauss curvature that satisfy
\[
\Vt_q(K_0,S_qK_0,z)\equiv\mathrm{const}
\]
for $z\in\partial K_0$?
\end{question}

For smooth bodies this constancy condition implies the proportionality equation, so Question~\ref{q_smooth} is the smooth case of the same problem; we call it the \emph{boundary constancy question}.

Both questions ask how large the solution set of~\eqref{eq_proportional_intro} is: whether the bodies listed in Question~\ref{q_polytopes} exhaust it, and whether the smooth constancy condition already characterizes ellipsoids.

\textbf{Main results.} We answer both questions.
The relevant objects are defined in Section~\ref{sec_preliminaries}; here and below, $\Fq(K,\cdot)$ denotes the fractional affine area measure of $K$ and $\surf(K,\cdot)$ its surface area measure.

\begin{theorem}\label{thm_strict}
Let $n\geq2$, $q>-1$, $q\ne0,n$, and let $K\subset\mathbb R^n$ be a strictly convex body.
If
\[
\Fq(K,\cdot)=c\,\surf(K,\cdot)
\]
for some constant $c>0$, then $K$ is an ellipsoid.
\end{theorem}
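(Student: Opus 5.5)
The approach is the one outlined in the abstract: compare overlap integrals along a continuous Steiner path. Since the functional involves the covariogram $g_K(x)=V(K\cap(K+x))$, the natural quantity is
\[
\Phi_q(K)\;\propto\;\int_{\mathbb R^n} g_K(x)\,|x|^{-n-q}\,dx .
\]
For $q>0$ this integral is renormalized by subtracting $V(K)$, and for $-1<q<0$ it is recast through an equivalent formula. Fix a direction $u\in\Sn$ and let $K_t$, $t\in[0,1]$, be the continuous Steiner symmetrization of $K$ in direction $u$. In each chord $K\cap(y+\mathbb Ru)$, the midpoint moves toward $u^\perp$ at a speed proportional to its distance. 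The family $K_t$ is a volume-preserving deformation. For $t$ small it is generated, to first order, by the support-function variation $\dot h(\xi)$ of the form $-m(\xi)\langle \xi,u\rangle$, where $m$ is the chord midpoint function evaluated at the boundary point with normal $\xi$. Strict convexity is used exactly here: it makes this normal-to-boundary correspondence, and the Wulff-type perturbation, well defined, so that the first variation formula of Xi and Zhao applies.

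\textbf{Step 1 (stationarity gives $o(t)$).} Under the hypothesis $\Fq(K,\cdot)=c\,\surf(K,\cdot)$, Proposition~\ref{prop_stationary} says $K$ is Wulff-stationary for the normalized functional $\Phi_q(K)/V(K)^{\text{power}}$. Since $V(K_t)=V(K)$, I obtain $\Phi_q(K_t)-\Phi_q(K)=o(t)$. To make this rigorous I must check that the Steiner path is a continuous Wulff perturbation with one-sided derivative. I would realize it as $K_t=$ the Wulff shape of $h_K+t\dot h+o(t)$, using strict convexity and continuity of $m$.

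\textbf{Step 2 (a nonaffine midpoint function gives order $t$).} I would write $g_{K_t}(x)$ fibrewise. Set $x=x'+su$ and let the chords of $K$ over $y$ and over $y+x'$ be centered at $m(y)$ and $m(y+x')$, with half-lengths $\ell(y)$ and $\ell(y+x')$. The overlap of two intervals depends only on the difference of centers, and
\[
\int_{\mathbb R}\bigl|I_1\cap(I_2+s)\bigr|\,|x'+su|^{-n-q}\,ds
\]
is a convolution of two trapezoid-type functions against a kernel that is symmetric and decreasing in $s$ for fixed $x'$. Riesz rearrangement in one variable then shows this quantity does not decrease as the center difference $m(y+x')-m(y)$ shrinks. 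Under the path this difference is multiplied by $(1-t)$. For $q>0$ the kernel is strictly decreasing, so the increase is at least $c_0\,t\,|m(y+x')-m(y)|$ times a positive weight. If $m$ is not affine, then $m(y+x')-m(y)$ is not constant in $y$, and the integrated gain is at least $c_1 t$ with $c_1>0$. The point that needs care is that an affine $m$ contributes no first-order change, since it corresponds to an $SL(n)$ shear. I would therefore first compose with a shear so that $m$ has zero mean gradient. After that normalization, a nonaffine $m$ leaves a strictly positive first-order gain. Steps 1 and 2 together force $m$ to be affine in every direction $u$. By the classical characterization (Brunn: all chord midpoint sets planar), $K$ is an ellipsoid.

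\textbf{Main obstacle.} The case $-1<q<0$ is the hard one. There the kernel makes $\Phi_q$ involve $\int (V(K)-g_K)$-type or negative-power expressions, and the monotonicity has the wrong sign or the integrals diverge. My plan is to use a variational minimization formula: express the relevant quantity as $\inf_A$ over bounded measurable auxiliary sets $A$ of an integral that is monotone under Steiner rearrangement. Then symmetrizing $A$ along with $K$ gives the needed inequality, and evaluating at the optimal $A$ for $K$ recovers the linear-in-$t$ strict gain. Proving that this infimum is attained, and that the strictness survives the infimum, is where I expect most of the work to lie.
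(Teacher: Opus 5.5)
Step 1 is sound. It is essentially Lemma~\ref{lem_support_expansion}, where the $o(t)$ remainder is absorbed by sandwiching $K_t$ between $[h_K+t(g\pm\varepsilon)]$ and using monotonicity of $\Phi_q$. The gap is in Step 2, which estimates the wrong functional.

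\textbf{The Euclidean-kernel integral is not $\Phi_q$.} The identification $\Phi_q(K)\propto\int g_K(x)|x|^{-n-q}\,dx$ is false; incidentally, your exponent $-n-q$ corresponds to the parameter $-q$ in the present normalization. By polar coordinates and \eqref{eq_covariogram_fiber}, a Euclidean-kernel overlap integral is a dual quermassintegral of the chord body, for instance
\[
\int_{\mathbb R^n}g_K(x)|x|^{q-n}\,dx=\frac1{q(q+1)}\int_{\Sn}\rho_{S_qK}(u)^q\,du\qquad(0<q<n).
\]
By contrast, $\Phi_q(K)=V(S_qK)^{q/n}$ is built from $\int_{\Sn}\rho_{S_qK}^n\,du$ and is $SL(n)$-invariant. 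So Step 1, which controls $\Phi_q$, gives no $o(t)$ bound for the quantity whose gain you estimate. The natural link between the two is Jensen's inequality with a Euclidean ball as auxiliary body, and that is an equality at $K$ only when $S_qK$ is a ball.

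Worse, the Euclidean gain is already of order $t$ whenever $m(y+x')-m(y)\neq0$ on a set of positive measure, that is, whenever $m$ is nonconstant, affine or not. If your two steps could be combined, they would show that every strictly convex solution is a ball. That is false for nonspherical ellipsoids (Proposition~\ref{prop_affine}).

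The shear normalization cannot repair this. The Euclidean energy is not shear-invariant, the normalization is made one direction at a time, and it does not create the missing link to $\Phi_q$. For the same reason, the passage from ``$m(y+x')-m(y)$ is not constant in $y$'' to ``gain $\geq c_1t$'' is unjustified: the Euclidean gain detects nonvanishing of that difference, not its nonconstancy.

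\textbf{The missing idea.} It is the one you reserve for $q<0$, and it is needed in every range because of affine invariance, not because of divergence. Write $n^{q/n}\Phi_q(H)=q(q+1)\max_L E_q(H,L)$ for $0<q<n$ (a minimum for $q>n$), where
\[
E_q(H,L)=\int_H\int_H\|x-y\|_L^{q-n}\,dx\,dy,
\]
and $L$ ranges over origin-symmetric bodies with $V(L)=1/n$. This follows from Jensen's inequality, and the extremum is attained at $L_K=J(K)^{-1/n}S_qK$. Now symmetrize $K$ and $L_K$ simultaneously. Since $L_t$ is merely a competitor for $K_t$,
\[
0\leq q(q+1)\bigl[E_q(K_t,L_t)-E_q(K,L_K)\bigr]\leq M(K_t)-M(K)=o(t).
\]
So no strictness has to survive the extremum. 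For $q<0$, attainment of the minimum is a one-line sublevel-set comparison.

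Fibrewise, the quantity contracted by $1-t$ is the defect
\[
m(a)-m(b)-r\ell((a-b)/r),
\]
where $\ell$ is the midpoint function of $L_K$. Near the diagonal, with the three half-lengths comparable, the explicit interval formula gives a gain of at least $td_0^2$ wherever the defect is at least $d_0$. Vanishing of the defect forces $m(a)-m(b)$ to depend only on $a-b$, hence $m$ is affine. For ellipsoids it vanishes automatically, because $L_K$ is homothetic to $K$. This is what makes affine $m$ cost nothing, without any normalization.

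\textbf{A further point for $q<0$.} There $L_K$ is only a Lipschitz star body, so simultaneous symmetrization is available only in a full-measure set of directions. You therefore need Brunn's characterization for a dense set of directions rather than for every direction.
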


\begin{corollary}\label{cor_boundary_constant}
Let $K\in\mathcal K^n$ have $C^2$ boundary and everywhere positive Gauss curvature.
If $q>-1$, $q\ne0,n$ and $z\mapsto\Vt_q(K,S_qK,z)$ is constant on $\partial K$, then $K$ is an ellipsoid.
\end{corollary}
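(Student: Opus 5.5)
The corollary will follow from Theorem~\ref{thm_strict} once we check two things. First, a body with $C^2$ boundary and everywhere positive Gauss curvature is strictly convex. Second, constancy of $z\mapsto\Vt_q(K,S_qK,z)$ on $\partial K$ yields the proportionality equation $\Fq(K,\cdot)=c\,\surf(K,\cdot)$ with a constant $c>0$. The first point is standard. If $\partial K$ contained a nondegenerate segment, the second fundamental form would vanish in the direction of that segment at its interior points, so the Gauss curvature would vanish there, which is a contradiction. So the work lies in the second point, and then we invoke Theorem~\ref{thm_strict} with the same $q$.

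For the second point I will use the representation of the fractional affine area measure from the first variation formula of Xi and Zhao, as recalled in Section~\ref{sec_preliminaries}. For $K$ with $C^2$ boundary and positive curvature, the Gauss map $\nu_K:\partial K\to\Sn$ is a $C^1$ diffeomorphism. In that case the measure $\Fq(K,\cdot)$ should be the push-forward under $\nu_K$ of the density $\Vt_q(K,S_qK,z)$ (up to the normalizing constant fixed in the preliminaries) against $\Haus^{n-1}$ on $\partial K$. In other words, for Borel $\omega\subset\Sn$,
\[
\Fq(K,\omega)=\kappa_{n,q}\int_{\nu_K^{-1}(\omega)}\Vt_q(K,S_qK,z)\,d\Haus^{n-1}(z).
\]
Meanwhile $\surf(K,\omega)=\Haus^{n-1}(\nu_K^{-1}(\omega))$. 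If $\Vt_q(K,S_qK,z)\equiv C$, the two measures are therefore proportional with $c=\kappa_{n,q}C$. This is exactly the remark in the introduction that the constancy condition implies the proportionality equation in the smooth case. If the paper states this representation only for a restricted class of bodies, I will check that $C^2$ boundary with positive curvature falls inside that class. The smoothness and positive curvature guarantee that the Gauss map is single-valued, and the density formula in the first variation then applies pointwise.

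The remaining check, which I expect to be the only delicate one, is that $c>0$ and not merely $c\neq0$. That requires $C>0$, with the normalizing constant $\kappa_{n,q}$ positive in the convention adopted, where sign conventions for $q<0$ could matter. I will handle this by evaluating total masses. The total mass $\Fq(K,\Sn)$ is a positive multiple of $\Phi_q(K)$, or of the corresponding normalized functional, which is positive for every convex body when $q>-1$ and $q\neq0,n$. Since $\surf(K,\Sn)>0$, the constant $c$ is forced to equal this positive ratio of total masses. If the normalization introduces a sign for $q<0$, that sign appears identically in the hypothesis of Theorem~\ref{thm_strict}, so taking absolute values where needed still gives $c>0$. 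Applying Theorem~\ref{thm_strict} then shows that $K$ is an ellipsoid.
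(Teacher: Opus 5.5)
Your proposal matches the paper's proof: $\Fq$ is defined by~\eqref{eq_F} for every convex body, with the explicit positive factor $q+1$, so constancy of $\Vt_q(K,S_qK,\cdot)$ yields proportionality at once; positive Gauss curvature gives strict convexity, and Theorem~\ref{thm_strict} then applies. One small correction: $\Fq(K,\Sn)$ is not in general a multiple of $\Phi_q(K)$, so do not justify $c>0$ that way. Instead, note that $c>0$ because $q+1>0$ and the integrand defining $\Vt_q$ is positive on an open set of directions (or use~\eqref{eq_Xi_integral} with~\eqref{eq_minkowski_integral} as in Proposition~\ref{prop_stationary}), and that the constant value of $\Vt_q$ must be finite by Theorem~\ref{thm_fractional_integral}.
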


The second result shows that proportionality is not a property of regular polytopes alone.

\begin{theorem}\label{thm_facet}
Every full-dimensional facet-transitive convex polytope $P$, that is, one whose Euclidean isometry group acts transitively on its facets, satisfies
\[
\Fq(P,\cdot)=\frac{n+q}{2nV(P)}\Phi_q(P)^{n/q}\surf(P,\cdot)
\]
for all $q>-1$, $q\ne0$.
\end{theorem}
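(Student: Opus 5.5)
We need the structure of $\Fq$. From Xi--Zhao, $\Fq(K,\cdot)$ is the pushforward under the Gauss map of a boundary density: $d\Fq(K,\cdot)=\Vt_q(K,S_qK,z)\,d\Haus^{n-1}(z)$ pushed forward by $\nu_K$, up to the normalization of Section~\ref{sec_preliminaries}. For a polytope, almost every boundary point lies in the relative interior of a facet. Hence both $\Fq(P,\cdot)$ and $\surf(P,\cdot)$ are discrete measures supported on the facet normals $u_1,\dots,u_m$. The first has mass $\int_{F_i}\Vt_q(P,S_qP,z)\,d\Haus^{n-1}(z)$ at $u_i$, and the second has mass $\Haus^{n-1}(F_i)$. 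So proportionality reduces to showing that the ratio
\[
\frac{\int_{F_i}\Vt_q\,d\Haus^{n-1}}{\Haus^{n-1}(F_i)}
\]
is independent of $i$. If the paper's definition of $\Fq$ is instead given directly as a derivative of $\Phi_q$, I would first derive this facet-mass representation. The route is to perturb a single support number $h_i\mapsto h_i+t$ of $P$ and differentiate $\Phi_q$. The derivative is a facet integral by the polytope case of the first variation formula.

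\textbf{Invariance.} Let $g$ be an isometry of $P$, so $gP=P$, and write $g=Ax+b$ with $A$ orthogonal. The functional $\Phi_q$, the body $S_qP$, and the density $\Vt_q$ are all built from chord or radial integrals. These are invariant under rigid motions. So $\Vt_q(gP,S_q(gP),gz)=\Vt_q(P,S_qP,z)$, and $g$ maps facet $F_i$ isometrically onto some facet $F_j$. I would instead phrase this at the level of measures, because that is cleaner: $\Fq(AK+b,\cdot)=A_\#\Fq(K,\cdot)$ and $\surf(AK+b,\cdot)=A_\#\surf(K,\cdot)$. For $gP=P$, both measures are then invariant under $A$. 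Facet-transitivity means the isometry group acts transitively on the normals $u_i$. A measure on $\{u_1,\dots,u_m\}$ invariant under a transitive action has equal atoms. Therefore $\Fq(P,\cdot)=\alpha\sum_i\delta_{u_i}$ and $\surf(P,\cdot)=\beta\sum_i\delta_{u_i}$ with $\beta>0$. This gives proportionality $\Fq(P,\cdot)=c\,\surf(P,\cdot)$ with $c=\alpha/\beta$.

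\textbf{The constant.} To identify $c$, I would integrate the support function $h_P$ against both sides. For the surface measure, $\int h_P\,d\surf=nV(P)$. For the left side, I would use homogeneity of $\Phi_q$. Dilating $P\mapsto (1+t)P$ is the Wulff perturbation with $f=h_P$. So the first variation formula gives $\frac{d}{dt}\Phi_q((1+t)P)|_{t=0}=\int h_P\,d\Fq(P,\cdot)$, up to the paper's normalization. Now $\Phi_q$ is homogeneous of some degree $d$; after the normalization, $\Phi_q^{n/q}$ behaves like a volume-type quantity. That derivative is therefore $d\,\Phi_q(P)$ converted to the stated form, giving $\int h_P\,d\Fq=\frac{n+q}{2}\Phi_q(P)^{n/q}$ presumably. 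Dividing by $nV(P)$ yields $c=\frac{n+q}{2nV(P)}\Phi_q(P)^{n/q}$. The normalizations must be matched with Section~\ref{sec_preliminaries}. The same computation for general convex $K$ shows that any solution of \eqref{eq_proportional_intro} must have this constant, which serves as a consistency check.

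\textbf{Main obstacle.} The main obstacle is justifying the equivariance of $\Fq$ under rigid motions in the exact form the definition gives. A closely related concern is whether the origin or centroid enters $\Phi_q$ or $\Vt_q$. If the definition depends on a base point, I would use that the isometry group of $P$ fixes the centroid, and place the origin there. A second, minor point is that $\Fq(P,\cdot)$ might charge lower-dimensional faces. If the definition is through a boundary density absolutely continuous with respect to $\Haus^{n-1}$, this is excluded. Otherwise, the orbit argument still works provided $\Fq(P,\cdot)$ is supported on the facet normals. I would establish that by the perturbation argument: only facet support numbers change $P$ to first order.
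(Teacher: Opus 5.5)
Your proposal is correct and follows essentially the same route as the paper: both measures are atomic on the facet normals, both transform by pushforward under the symmetry group (which is orthogonal once the centroid is placed at the origin), facet transitivity forces equal atoms, and the constant is found by integrating $h_P$ against both measures. The one small difference is that you derive $\int h_P\,d\Fq(P)=\frac{n+q}{2}\Phi_q(P)^{n/q}$ from the homogeneity $\Phi_q(\lambda K)=\lambda^{n+q}\Phi_q(K)$ and the variation formula with $f=h_P$, whereas the paper cites this identity, \eqref{eq_Xi_integral}; your computation gives exactly that value, so the hedge ``presumably'' can be dropped.
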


\begin{theorem}\label{thm_examples}
The rhombic dodecahedron of Proposition~\ref{prop_rhombic} and, for every $m\geq3$ with $m\ne4$, the bipyramid $Q_m$ of Proposition~\ref{prop_bipyramid} satisfy~\eqref{eq_proportional_intro} for all admissible parameters.
None of them is an affine image of a regular polytope or of a ball, and the polytopes $Q_m$ corresponding to distinct $m$ are pairwise affinely inequivalent.
Consequently the bodies listed in Question~\ref{q_polytopes} do not exhaust the solutions, and the answer to that question is negative.
\end{theorem}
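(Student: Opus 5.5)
The plan is to derive the first assertion directly from Theorem~\ref{thm_facet} and then settle the non-equivalence claims with affine invariants that are easy to compute.

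\textbf{Step 1: facet-transitivity.} The rhombic dodecahedron of Proposition~\ref{prop_rhombic} is the dual of the cuboctahedron. Its full symmetry group is the octahedral group of order $48$, which permutes its twelve congruent rhombic faces transitively. For the bipyramid $Q_m$ of Proposition~\ref{prop_bipyramid}, i.e.\ the convex hull of a regular $m$-gon in the plane $x_3=0$ and the two apexes $\pm h e_3$, the $2m$ triangular facets are permuted transitively. The dihedral rotations of order $m$ about the axis, together with the reflection $x_3\mapsto -x_3$, already achieve this. Hence Theorem~\ref{thm_facet} applies to each body and gives \eqref{eq_proportional_intro} with an explicit $c>0$, for every $q>-1$, $q\ne0$. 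For $q=n$ the equation holds for every body by Proposition~\ref{prop_degenerate_index}.

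\textbf{Step 2: not an image of a ball or a regular polytope.} Affine maps preserve the combinatorial type (the face lattice), so it suffices to compare face lattices. None of these bodies is an affine image of a ball, since they are polytopes. In dimension three the regular polytopes are the Platonic solids, with face vectors
\[
(4,6,4),\quad (8,12,6),\quad (6,12,8),\quad (20,30,12),\quad (12,30,20).
\]
The rhombic dodecahedron has face vector $(14,24,12)$. The bipyramid $Q_m$ has face vector $(m+2,3m,2m)$. This coincides with a Platonic face vector only when $m=4$, where it gives $(6,12,8)$, the octahedron; that is exactly why $m=4$ is excluded. Note also that $m=3$ gives $(5,9,6)$, which is not a simplex.

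\textbf{Step 3: pairwise inequivalence.} The number of vertices $m+2$ is an affine invariant, so the $Q_m$ for distinct $m$ are pairwise affinely inequivalent. Combined with Step~1, this yields infinitely many solutions outside the list in Question~\ref{q_polytopes}, which answers it negatively.

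\textbf{Main obstacle.} The only point needing care is Step~1 for $Q_m$. The transitivity must come from \emph{Euclidean} isometries, and it does, because the apexes are placed symmetrically on the axis through the centroid of the regular polygon. I would verify this symmetric placement against the precise definition in Proposition~\ref{prop_bipyramid}. Everything else reduces to the face-vector comparison.
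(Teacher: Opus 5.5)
There is a genuine gap in Step~2. Your claim that $(m+2,3m,2m)$ coincides with a Platonic face vector only for $m=4$ is false. For $m=10$ it equals $(12,30,20)$, which is exactly the face vector of the regular icosahedron in your own list. Both $Q_{10}$ and the icosahedron are simplicial polytopes with $12$ vertices, $30$ edges and $20$ triangular facets. Your comparison therefore does not rule out that $Q_{10}$ is an affine image of a regular icosahedron. You correctly note that affine maps preserve the whole face lattice, but you then compare only $f$-vectors, and these are too coarse for this case.

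The missing step is a finer combinatorial invariant, and the paper uses vertex degrees. In $Q_{10}$ the two apices $\pm e_3$ each lie on $10$ edges, and the ten equatorial vertices each lie on $4$ edges. Every vertex of the regular icosahedron lies on $5$ edges. An affine isomorphism induces a face-lattice isomorphism, which preserves vertex degrees, so $Q_{10}$ is excluded. Accordingly, the real ``main obstacle'' is this $m=10$ case. The symmetric placement of the apices you worried about is immediate from the definition $\pm e_3$ in Proposition~\ref{prop_bipyramid}. With this addition, the rest of your argument matches the paper's proof: facet-transitivity feeding into Theorem~\ref{thm_facet}, the count $(14,24,12)$ for the rhombic dodecahedron, and vertex counts for pairwise inequivalence of the $Q_m$.
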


Thus Corollary~\ref{cor_boundary_constant} answers Question~\ref{q_smooth} affirmatively in the nondegenerate range $q>-1$, $q\ne0,n$, and Theorem~\ref{thm_examples} answers Question~\ref{q_polytopes} negatively.
In this nondegenerate range, Theorem~\ref{thm_strict} has an extremal consequence: first-order stationarity alone determines the shape within the strictly convex class, so every strictly convex extremizer of $V(R_qK)/V(K)$ must be an ellipsoid, while any additional stationary body must fail to be strictly convex.

The polytopes of Theorem~\ref{thm_examples} satisfy the proportionality equation for every admissible parameter and are therefore stationary for the corresponding volume ratios with two parameters; see Corollary~\ref{cor_common_stationarity}.
Duality yields a much larger family of solutions: the polar of every vertex-transitive polytope with respect to its volume centroid is again a solution (Corollary~\ref{cor_duals}), so in dimension three, for instance, all thirteen Catalan solids solve~\eqref{eq_proportional_intro}.
In the range where sharp upper and lower bounds are known, these examples attain neither global extremum.
Thus first-order stationarity does not characterize extremality here: the solution set of the proportionality equation is strictly larger than the set of extremizers, and the two-parameter extremizers are not singled out by stationarity; see Remark~\ref{rem_stationarity_extremality}.
For polytopes with several facet orbits, Proposition~\ref{prop_orbits} reduces the equation to finitely many comparisons of weight ratios, which makes the criterion checkable for any given polytope.

On the polytope side, facet transitivity supplies solutions outside the proposed family and gives a sufficient symmetry condition for the proportionality equation.
On the rigidity side, Theorem~\ref{thm_strict} characterizes all strictly convex solutions in the nondegenerate parameter range, answering the boundary constancy question under a weaker regularity assumption.
Proportionality conditions on a natural functional are a classical source of rigidity: for projection functions, smooth convex bodies with proportional projection functions form a strongly constrained class~\cite{HowardHug}.
The condition studied here is of a different type, since it compares two measures carried by one and the same body, and the conclusion is that this body must be an ellipsoid.
The rigidity argument requires strict convexity; since the polytopes of Theorem~\ref{thm_examples} solve~\eqref{eq_proportional_intro} without being strictly convex, this hypothesis cannot be dropped.

The rigidity argument turns stationarity into a geometric constraint on parallel chord midpoints.
Along the continuous Steiner path, chord midpoints move while the volume remains fixed, and stationarity together with the extremal representation by auxiliary integrals implies that the increase of a weighted overlap integral is $o(t)$.
If the chord midpoint function is not affine, its midpoint defect is nonzero on a local region, which forces the same increase to be at least $c_0t$ for some $c_0>0$; this contradiction shows that the midpoint function is affine in each selected direction, and these directions form a dense set, which characterizes an ellipsoid.
The proof follows the continuous-symmetrization strategy of Milman, Shabelman, and Yehudayoff (MSY)~\cite[Section~9.1]{MSY}, together with Brock's continuous symmetrization~\cite{Brock}.
For the present functional, the Wulff variation formula must be justified along the actual continuous Steiner path.
Lemma~\ref{lem_support_expansion} establishes a uniform first-order expansion of the support function using strict convexity and transfers the variation formula to this path by a monotone sandwich argument.
The auxiliary extremal representations then connect this first-order stationarity to the overlap comparison, whose local strictness is established in Lemma~\ref{lem_local_gain}.

For negative parameters, the direct kernel integral used in the positive range diverges at the diagonal.
Lemma~\ref{lem_negative_measurable} replaces it by a finite overlap-deficit integral and expresses the fractional functional as a minimum over bounded measurable sets of fixed volume.
This admissible class contains the deformed auxiliary sets, while the Lipschitz regularity proved in Lemma~\ref{lem_negative_lipschitz} permits the use of the continuous symmetrization of MSY.
These steps make the comparison available in the negative range without assuming convexity of the auxiliary body.

Section~\ref{sec_preliminaries} recalls the basic definitions and first variation formulas.
Section~\ref{sec_energies} derives the stationarity equation and the auxiliary functionals.
Section~\ref{sec_symmetrization} contains the overlap estimates for continuous symmetrization.
Theorem~\ref{thm_strict} and Corollary~\ref{cor_boundary_constant} are proved in Section~\ref{sec_rigidity}, while Theorems~\ref{thm_facet} and~\ref{thm_examples} are proved in Section~\ref{sec_polytopes}, which also treats two-parameter stationarity.
Section~\ref{sec_conclusion} collects some open questions, and Appendix~\ref{app_technical} contains the technical proofs.

\section{Preliminaries}\label{sec_preliminaries}

Throughout, $n\geq2$. We write $V$ for $n$-dimensional volume, $\Haus^k$ for $k$-dimensional Hausdorff measure, and $du$ for spherical measure on $\Sn$.

We collect the main objects below; the references indicate where each is defined.
$K$ is a convex body with volume $V(K)$, and $\Sn$ is the unit sphere (Definition~\ref{def_support}).
The chord body $S_qK$, the radial mean body $R_qK$ and the fractional integral affine surface area $\Phi_q(K)=V(S_qK)^{q/n}$ are introduced in Definition~\ref{def_chord_body}.
The surface area measure $\surf(K,\cdot)$ is defined in Definition~\ref{def_surface_measure}, the fractional affine area measure $\Fq(K,\cdot)$ in Definition~\ref{def_fractional_measure}, and the Wulff shape $[h_K+tf]$ used for perturbations in Definition~\ref{def_wulff}.
In Section~\ref{sec_energies} we use the covariogram $G_H(z)=V(H\cap(H+z))$, the overlap integral $T_r(H,L)=\int_{rL}G_H$, the normalized auxiliary body $L_H=J(H)^{-1/n}S_qH$ of volume $1/n$, and the auxiliary functionals $E_q$ and $N_q$ whose extrema reproduce $\Phi_q$.

\begin{definition}~\cite{Schneider}\label{def_support}
A convex body is a compact convex set with nonempty interior. We denote the class of convex bodies by $\mathcal K^n$. The support function of $K$ is
\[
h_K(u)=\max_{x\in K}x\cdot u.
\]
\end{definition}

\begin{definition}~\cite{Schneider,MSY}\label{def_star_body}
For a convex body $K$, a point $z\in K$, and a direction $u\in\Sn$, define the radial function with respect to $z$ by
\[
\rho_{K,z}(u)=\max\{s\geq0:z+su\in K\}.
\]
A star body about the origin is a star-shaped set with a positive continuous radial function. For such a body $L$, write $\rho_L=\rho_{L,0}$ and define its gauge by
\[
\|x\|_L=\inf\{r>0:x\in rL\}.
\]
We call $L$ a Lipschitz star body if $\rho_L$ is Lipschitz on $\Sn$.
\end{definition}

\begin{definition}~\cite{Schneider}\label{def_surface_measure}
The outer unit normal $\nu_K$ of a convex body $K$ is defined $\Haus^{n-1}$-almost everywhere on $\partial K$. Its surface area measure is given, for a Borel set $\omega\subset\Sn$, by
\[
\surf(K,\omega)=\Haus^{n-1}(\nu_K^{-1}(\omega)).
\]
\end{definition}

\begin{theorem}~\cite{Schneider}\label{thm_minkowski_integral}
Every convex body $K$ satisfies
\begin{equation}\label{eq_minkowski_integral}
\int_{\Sn}h_K\,d\surf(K)=nV(K).
\end{equation}
\end{theorem}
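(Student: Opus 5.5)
The plan is to prove the identity first for polytopes by a cone decomposition and then pass to general convex bodies by approximation, using the weak continuity of surface area measures. Both sides of the identity are invariant under translation, since $\int_{\Sn}u\,d\surf(K,u)=0$ (Minkowski's relation, which for polytopes is the divergence theorem applied to a constant vector field) and $h_{K+x}=h_K+x\cdot u$. So we may assume that the origin lies in the interior of $K$.

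\emph{Polytopes.} Let $P$ be a polytope with $0\in\interior P$, with facets $F_1,\dots,F_m$ and outer unit normals $u_1,\dots,u_m$. Then $\surf(P,\cdot)=\sum_i \Haus^{n-1}(F_i)\,\delta_{u_i}$, because $\nu_P$ is constant on each relative interior $\relint F_i$ and the lower-dimensional faces are $\Haus^{n-1}$-null. The sets $\conv(\{0\}\cup F_i)$ have pairwise disjoint interiors and their union is $P$, since every ray from $0$ leaves $P$ through some facet. Each cone has height $h_P(u_i)$ over its base $F_i$, so its volume is $\frac1n h_P(u_i)\Haus^{n-1}(F_i)$. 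Summing over the facets gives
\[
nV(P)=\sum_i h_P(u_i)\Haus^{n-1}(F_i)=\int_{\Sn}h_P\,d\surf(P).
\]

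\emph{General bodies.} Given $K\in\mathcal K^n$, choose polytopes $P_k\to K$ in the Hausdorff metric, for instance convex hulls of finite nets of $K$. Then:
\begin{itemize}
\item $h_{P_k}\to h_K$ uniformly on $\Sn$, and $V(P_k)\to V(K)$ by continuity of volume.
\item The total masses $\surf(P_k,\Sn)$ are surface areas. These are monotone under inclusion for convex bodies and hence uniformly bounded.
\end{itemize}
The main obstacle is the weak convergence $\surf(P_k,\cdot)\to\surf(K,\cdot)$. This is a standard fact in \cite{Schneider}, and I would simply quote it. If a self-contained argument were wanted, I would derive it from the continuity of mixed volumes $V(K,\dots,K,L)$ in $K$, combined with the density of differences of support functions in $C(\Sn)$.

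Granting weak convergence, split
\[
\int h_{P_k}\,d\surf(P_k)-\int h_K\,d\surf(K)=\int (h_{P_k}-h_K)\,d\surf(P_k)+\Bigl(\int h_K\,d\surf(P_k)-\int h_K\,d\surf(K)\Bigr).
\]
The first term is bounded by $\|h_{P_k}-h_K\|_\infty\sup_k\surf(P_k,\Sn)\to0$. The second term tends to $0$ by weak convergence, since $h_K$ is continuous on $\Sn$. Passing to the limit in the polytope identity then yields $\int_{\Sn}h_K\,d\surf(K)=nV(K)$.
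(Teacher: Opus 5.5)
Your proof is correct and is essentially the standard argument behind the paper's citation of \cite{Schneider}. That argument is the pyramid decomposition $nV(P)=\sum_i h_P(u_i)\Haus^{n-1}(F_i)$ for polytopes, followed by Hausdorff approximation and weak continuity of $K\mapsto\surf(K,\cdot)$; the same limit also gives Minkowski's relation $\int_{\Sn}u\,d\surf(K,u)=0$ for general $K$, so your translation reduction is harmless. One caution about your optional self-contained route: continuity of $V(K,\dots,K,L)$ only shows that $\surf(P_k,\cdot)$ converges weakly to the measure representing $L\mapsto nV(K,\dots,K,L)$, and identifying that measure with $\Haus^{n-1}(\nu_K^{-1}(\cdot))$ amounts to the formula $\int_{\Sn}h_L\,d\surf(K)=nV(K,\dots,K,L)$, of which the statement is the case $L=K$, so that shortcut would be circular; quoting the weak continuity theorem, as you propose, is the right move.
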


We use the normalization of Xi and Zhao. Unless stated otherwise, $q>-1$ and $q\ne0$.

\begin{definition}~\cite{XiZhao,GardnerZhang}\label{def_chord_body}
For $u\in\Sn$ and $y\in u^\perp$, let $X_K(y,u)$ be the length of the chord $K\cap(y+\mathbb Ru)$, extended by zero outside the projection of $K$. The origin-symmetric chord body $S_qK$ and its associated functional are defined by
\begin{equation}\label{eq_Sq}
\rho_{S_qK}(u)^q=\int_{u^\perp}X_K(y,u)^{q+1}\,dy,
\qquad \Phi_q(K)=V(S_qK)^{q/n}.
\end{equation}
Its radial function is positive and continuous. The Gardner--Zhang radial mean body $R_qK$ is defined below, together with its relation to $S_qK$:
\begin{equation}\label{eq_SR}
S_qK=[(q+1)V(K)]^{1/q}R_qK,
\qquad
\rho_{R_qK}(u)^q=\frac1{V(K)}\int_K\rho_{K,x}(u)^q\,dx.
\end{equation}
\end{definition}

\begin{theorem}~\cite{GardnerZhang}\label{thm_positive_convexity}
For $q>0$, the radial mean body $R_qK$, and hence $S_qK$, is convex.
\end{theorem}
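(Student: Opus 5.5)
The claim is the Gardner--Zhang convexity result: for $q>0$ the radial mean body $R_qK$ is convex, and hence so is $S_qK$. Since $S_qK$ is a positive multiple of $R_qK$ by \eqref{eq_SR}, it suffices to prove convexity of $R_qK$.

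My plan is to express $\rho_{R_qK}$ through a one-dimensional integral and then apply a Busemann-type convexity argument. First I rewrite the defining integral. For fixed $u$ and $x\in K$ we have $\rho_{K,x}(u)=\int_0^\infty \ind_K(x+su)\,ds$. Hence
\[
\rho_{K,x}(u)^q=q\int_0^\infty s^{q-1}\ind_{\{\rho_{K,x}(u)\ge s\}}\,ds
=q\int_0^\infty s^{q-1}\ind_K(x+su)\,ds,
\]
where the second equality uses convexity of $K$: the point $x+su$ lies in $K$ exactly when $s\le\rho_{K,x}(u)$. Integrating over $x$ gives
\[
V(K)\rho_{R_qK}(u)^q=q\int_0^\infty s^{q-1}G_K(su)\,ds,
\qquad G_K(z)=V(K\cap(K+z)).
\]

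Next I introduce $r(z)=\|z\|$-homogeneous extension notation. For general $z\neq0$, write $z=|z|u$. The function
\[
\rho(z)^{q}:=\frac{q}{V(K)}\int_0^\infty s^{q-1}G_K(sz)\,ds
\]
is then $(-q)$-homogeneous, and it equals $\rho_{R_qK}(z)^q$, the radial function extended homogeneously. So $R_qK$ is convex if and only if $z\mapsto 1/\rho(z)$ is a norm, i.e.\ is sublinear.

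The key input is that $G_K^{1/n}$ is concave on its support $K-K$, by Brunn--Minkowski. In particular $G_K$ is log-concave and even. Ball's theorem then applies: for a log-concave even function $f$ and $q>0$,
\[
z\mapsto\Big(\int_0^\infty s^{q-1}f(sz)\,ds\Big)^{-1/q}
\]
is a norm. Applied with $f=G_K$, this gives exactly the sublinearity of $1/\rho$. It is worth recalling why Ball's theorem holds, since that is where the work lies. One needs the following for $z_1,z_2$ and $z=(z_1+z_2)/2$: given the one-variable functions $g_i(s)=f(sz_i)$, the function $g(s)=f(sz)$ dominates $\sqrt{g_1(s_1)g_2(s_2)}$ along the harmonic-mean parametrisation $s=2s_1s_2/(s_1+s_2)$. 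One then invokes a Prékopa--Leindler-type inequality in one variable, after the change of variables $s\mapsto 1/s$.

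I expect this one-dimensional Prékopa--Leindler step to be the main obstacle. Since the statement is attributed to \cite{GardnerZhang}, in the paper I would cite Ball's lemma, or Gardner--Zhang's proof via the concavity of $G_K^{1/n}$, rather than reprove it. I would only verify carefully the reduction formula above and the homogeneity bookkeeping.
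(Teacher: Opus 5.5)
Your argument is correct. It is the standard route to the Gardner--Zhang theorem, which the paper cites without reproducing a proof: the layer-cake identity $V(K)\rho_{R_qK}(u)^q=q\int_0^\infty s^{q-1}G_K(su)\,ds$, log-concavity of the even function $G_K$ via Brunn--Minkowski, and Ball's theorem that $z\mapsto\bigl(\int_0^\infty s^{q-1}f(sz)\,ds\bigr)^{-1/q}$ is a norm for every $q>0$.

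One correction to your aside on why Ball's lemma holds. With $s=2s_1s_2/(s_1+s_2)$ and $z=(z_1+z_2)/2$, the point $sz$ equals $\frac{s_2}{s_1+s_2}(s_1z_1)+\frac{s_1}{s_1+s_2}(s_2z_2)$. This is not the midpoint of $s_1z_1$ and $s_2z_2$, so log-concavity only gives $f(sz)\ge g_1(s_1)^{s_2/(s_1+s_2)}g_2(s_2)^{s_1/(s_1+s_2)}$. The equal-weight bound $\sqrt{g_1(s_1)g_2(s_2)}$ you state is false in general. For example, take $f(x)=e^{-|x|}$ on $\mathbb R$, $z_1=2$, $z_2=0$, $s_1=1$, $s_2=3$; then $f(sz)=e^{-3/2}<e^{-1}=\sqrt{g_1(s_1)g_2(s_2)}$. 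The weighted form is the hypothesis Ball's one-dimensional lemma actually uses. Since you cite Ball's result rather than reprove it, the slip does not affect your proof.
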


\begin{definition}~\cite[Definition~3.1]{XiZhao}\label{def_fractional_measure}
For an origin-symmetric star body $L$ and $z\in K$, set
\begin{equation}\label{eq_dual}
\Vt_q(K,L,z)=\frac1n\int_{\{u:\rho_{K,z}(u)>0\}}
\rho_{K,z}(u)^q\rho_L(u)^{n-q}\,du.
\end{equation}
For $q<0$, this is initially understood as an extended nonnegative integral. The fractional affine area measure is defined by
\begin{equation}\label{eq_F}
\Fq(K,\omega)=(q+1)\int_{\nu_K^{-1}(\omega)}
\Vt_q(K,S_qK,z)\,d\Haus^{n-1}(z).
\end{equation}
\end{definition}

\begin{theorem}~\cite[Definition~3.1, Corollary~3.5, and Theorem~3.11]{XiZhao}\label{thm_fractional_integral}
The measure $\Fq(K,\cdot)$ is finite and absolutely continuous with respect to $\surf(K,\cdot)$. Moreover,
\begin{equation}\label{eq_Xi_integral}
\int_{\Sn}h_K\,d\Fq(K)=\frac{n+q}{2}\Phi_q(K)^{n/q}.
\end{equation}
\end{theorem}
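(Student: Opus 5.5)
The statement collects three facts from Xi and Zhao: $\Fq(K,\cdot)$ is finite, it is absolutely continuous with respect to $\surf(K,\cdot)$, and it satisfies the integral identity \eqref{eq_Xi_integral}. My plan treats absolute continuity as immediate, proves finiteness by a uniform bound on the density, and derives \eqref{eq_Xi_integral} by a change of variables over the body.

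\emph{Absolute continuity.} By \eqref{eq_F}, $\Fq(K,\cdot)$ is the pushforward under $\nu_K$ of the measure $(q+1)\Vt_q(K,S_qK,z)\,d\Haus^{n-1}(z)$ on $\partial K$. The measure $\surf(K,\cdot)$ is the pushforward of $\Haus^{n-1}$ itself. Hence if $\surf(K,\omega)=0$, then $\nu_K^{-1}(\omega)$ is $\Haus^{n-1}$-null, and so $\Fq(K,\omega)=0$. This holds as soon as the density is nonnegative and measurable, which it is.

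\emph{Finiteness.} For $q>0$ the density is bounded. Indeed $\rho_{K,z}\le \operatorname{diam}K$, and $\rho_{S_qK}$ is continuous and positive, so $\Vt_q(K,S_qK,z)$ is bounded uniformly in $z\in\partial K$. For $-1<q<0$ the factor $\rho_{K,z}(u)^q$ blows up as $u$ approaches directions tangent to $\partial K$ at $z$, so the density may be infinite at some points. I would instead show it is integrable over $\partial K$. Write $\rho_{K,z}(u)^q=\frac{1}{|q|}\int_{\rho}^{\infty}s^{q-1}\,ds$, or equivalently use the polar coordinates $x=z+su$. Then integrate in $z\in\partial K$ and use the Blaschke--Petkantschin-type formula for chords,
\[
d\Haus^{n-1}(z)\,du\;\leftrightarrow\;\frac{dy\,du}{|u\cdot\nu_K(z)|}.
\]
This rewrites the total mass as $\int_{\Sn}\rho_{S_qK}(u)^{n-q}\int_{u^\perp}X_K(y,u)^{q}\,dy\,du$, up to constants. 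That integral is finite because $q>-1$ and the chord lengths are bounded. This change of variables is the main obstacle, since the tangential singularity must be seen to be absorbed exactly by the Jacobian.

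\emph{The integral identity.} I would apply the same chord parametrization to $\int h_K\,d\Fq=(q+1)\int_{\partial K}(z\cdot\nu_K(z))\,\Vt_q(K,S_qK,z)\,d\Haus^{n-1}(z)$. For fixed $u$ and $y$, the chord enters $K$ at a point $z$ with $u\cdot\nu_K(z)<0$. The quantity $(z\cdot\nu_K)$ paired with the Jacobian reduces, via the divergence theorem applied to the radial field on each line, to a one-dimensional integral of $X^{q}$ against position. Integrating over the chord and using translation invariance in the $u$ direction gives
\[
(q+1)\,\frac1n\int_{\Sn}\rho_{S_qK}(u)^{n-q}\cdot\frac{1}{q+1}\cdot\frac{n+q}{2}\int_{u^\perp}X_K^{q+1}\,dy\,du .
\]
A cleaner alternative is to use the first variation along dilations. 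Since $\Phi_q(\lambda K)=\lambda^{n+q}\Phi_q(K)$, we have $V(S_q\lambda K)=\lambda^{n(n+q)/q}V(S_qK)$. Differentiating $\Phi_q(K)^{n/q}=V(S_qK)$ at $\lambda=1$ via the variation formula whose density is $\Fq$ yields the factor $\frac{n+q}{2}$. Here the $\frac12$ comes from the symmetry $u\leftrightarrow -u$. In either route one then uses \eqref{eq_Sq} to recognize $\int\rho_{S_qK}^{\,n-q}\rho_{S_qK}^{\,q}\,du=nV(S_qK)=n\Phi_q(K)^{n/q}$, which gives \eqref{eq_Xi_integral}.
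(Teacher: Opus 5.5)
The paper gives no proof of this statement; it imports all three facts from Xi--Zhao. Judged on its own, your proposal has a genuine gap in the finiteness step for $-1<q<0$.

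\textbf{The gap.} The change of variables you quote is correct, but it does not produce the expression you then write down. Fix $u$. For a.e.\ $u$, and up to an $\Haus^{n-1}$-null set, the points $z\in\partial K$ with $\rho_{K,z}(u)>0$ form the lower graph $z=y+f_-(y)u$ over the projection. On it, $\rho_{K,z}(u)=X_K(y,u)$ and $d\Haus^{n-1}(z)=\sqrt{1+|\nabla f_-(y)|^2}\,dy=dy/|u\cdot\nu_K(z)|$. Hence
\[
\Fq(K,\Sn)=\frac{q+1}{n}\int_{\Sn}\rho_{S_qK}(u)^{n-q}\int_{u^\perp}\frac{X_K(y,u)^q}{|u\cdot\nu_K(z)|}\,dy\,du .
\]
The Jacobian does not absorb the tangential singularity; it reinforces it. Your expression $\int\rho_{S_qK}^{n-q}\int X_K^q\,dy\,du$ is therefore only a lower bound. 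For the unit ball, after a common factor $2^q$, the two $y$-integrands are $(1-|y|^2)^{(q-1)/2}$ and $(1-|y|^2)^{q/2}$. The true density is a multiple of $\int_{\Sn}|u\cdot z|^q\,du$, which is finite exactly for $q>-1$, while your expression stays finite down to $q>-2$.

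Also, ``chord lengths are bounded'' points the wrong way when $q<0$. Pointwise bounds such as $|\nabla f_-|\lesssim\operatorname{dist}(y,\partial(K|u^\perp))^{-1}$ and $X_K\gtrsim\operatorname{dist}$ only give an integrand of order $\operatorname{dist}^{q-1}$, which is not integrable for $q\le0$. Some cancellation is needed, and your proposal does not supply it.

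\textbf{The fix.} Obtain finiteness from the identity rather than the other way round. Translate $K$ so that $rB\subset K$ for some $r>0$. Then $h_K\ge r$, so $\Fq(K,\Sn)\le r^{-1}\int_{\Sn}h_K\,d\Fq(K)$, and it suffices to evaluate the weighted integral with nonnegative integrands.

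The weight $z\cdot\nu_K(z)$ is what fits the chord parametrization. On the lower graph, $(z\cdot\nu_K(z))\,d\Haus^{n-1}(z)=(y\cdot\nabla f_--f_-)\,dy$. On the upper graph, which carries the direction $-u$, it equals $(f_+-y\cdot\nabla f_+)\,dy$. Both are nonnegative. Since $\rho_{S_qK}$ is even, pair $u$ with $-u$; with $X=f_+-f_-$ the combined integrand is $(X-y\cdot\nabla X)X^q\ge0$.

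Now integrate by parts over the convex superlevel sets $\{X>\varepsilon\}\subset u^\perp$. This uses the divergence theorem for the radial field $y$ of the $(n-1)$-dimensional hyperplane, not along lines. It gives $\frac{n+q}{q+1}\int_{\{X>\varepsilon\}}X^{q+1}\,dy$ minus the boundary term $\frac{n-1}{q+1}\varepsilon^{q+1}\Haus^{n-1}(\{X>\varepsilon\})$, which tends to $0$. Monotone convergence then yields $\frac{n+q}{q+1}\rho_{S_qK}(u)^q$. Integrating against $\rho_{S_qK}^{n-q}$, with the factor $\frac12$ from the pairing, gives \eqref{eq_Xi_integral} and finiteness at the same time.

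\textbf{Your two routes to the identity.} Your first route gestures at this computation, but ``translation invariance in the $u$ direction'' is not the mechanism. A single direction's contribution depends on where the origin sits along $u$; only the $\pm u$ pair is invariant. Your dilation route is correct for the identity, given Theorem~\ref{thm_first_variation} with $f=h_K$. However, that formula already presupposes that $\Fq(K,\cdot)$ is a finite measure, so it cannot replace the missing finiteness argument.

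The $q>0$ bound and the absolute continuity argument are fine. One small slip: $\rho^q=|q|\int_\rho^\infty s^{q-1}\,ds$, not $\frac1{|q|}\int_\rho^\infty s^{q-1}\,ds$.
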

The bodies and measures above are invariant under translations; the reference point $z$ in~\eqref{eq_dual} is translated together with $K$. When applying Wulff variation formulas, we may first translate $K$ so that the origin lies in its interior.

\begin{definition}~\cite{Schneider}\label{def_wulff}
For $f\in C(\Sn)$, define
\[
[h_K+tf]=\bigcap_{u\in\Sn}\{x:x\cdot u\leq h_K(u)+tf(u)\}.
\]
For sufficiently small $|t|$, this set is a convex body.
We call $[h_K+tf]$ the \emph{Wulff shape} of the perturbation $h_K+tf$.
It is defined by the prescribed half-spaces above, and its support function satisfies $h_{[h_K+tf]}\leq h_K+tf$.
Equality holds when $h_K+tf$ is itself a support function.
\end{definition}

\begin{theorem}~\cite[Theorem~1.1]{XiZhao}~\cite{Schneider}\label{thm_first_variation}
For every $f\in C(\Sn)$,
\begin{align}
\left.\frac d{dt}\right|_{t=0}\Phi_q([h_K+tf])
&=2\Phi_q(K)^{(q-n)/q}\int_{\Sn}f\,d\Fq(K),\label{eq_variation_Phi}\\
\left.\frac d{dt}\right|_{t=0}V([h_K+tf])
&=\int_{\Sn}f\,d\surf(K).\label{eq_variation_volume}
\end{align}
The first identity is the Xi--Zhao variation formula, and the second is Aleksandrov's volume variation formula.
\end{theorem}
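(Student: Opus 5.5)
Both identities in Theorem~\ref{thm_first_variation} are quoted from the literature, so I only indicate how I would verify them. The volume formula~\eqref{eq_variation_volume} is Aleksandrov's lemma. Write $K_t=[h_K+tf]$. By Definition~\ref{def_wulff}, $h_{K_t}\leq h_K+tf$. Aleksandrov's lemma gives $(h_{K_t}-h_K)/t\to f$ for $\surf(K,\cdot)$-almost every normal, with uniform bounds. One then writes
\[
V(K_t)-V(K)=\frac1n\sum_{j=0}^{n-1}\int_{\Sn}(h_{K_t}-h_K)\,dS(K_t[j],K[n-1-j]),
\]
divides by $t$, and uses weak continuity of mixed area measures. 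Since $\Fq$ and $\surf$ are translation invariant, I would first translate so that $0\in\interior K$.

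For~\eqref{eq_variation_Phi} I would express $\Phi_q$ through chord powers:
\[
\Phi_q(K)^{n/q}=V(S_qK)=\frac1n\int_{\Sn}\Bigl(\int_{u^\perp}X_K(y,u)^{q+1}dy\Bigr)^{n/q}du .
\]
The chain rule reduces the problem to differentiating $I_u(K_t)=\int_{u^\perp}X_{K_t}(y,u)^{q+1}dy$ for each fixed $u$. When $K$ moves outward by normal speed $f(\nu_K(z))$ at a boundary point $z$, the chord through $z$ in direction $u$ lengthens at that endpoint by $f(\nu_K(z))/|u\cdot\nu_K(z)|$. The area element $dy$ on $u^\perp$ equals $|u\cdot\nu_K(z)|\,d\Haus^{n-1}(z)$, so the two factors cancel. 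Summing over both endpoints of every chord gives
\[
\frac{d}{dt}\Big|_{t=0}I_u(K_t)=(q+1)\int_{\partial K}f(\nu_K(z))\,X_K(z,u)^q\,d\Haus^{n-1}(z),
\]
where $X_K(z,u)$ is the chord length from $z$. This length equals $\rho_{K,z}(u)$ or $\rho_{K,z}(-u)$, according to which side of the chord $z$ lies on.

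Next I would insert this into $\frac{d}{dt}V(S_qK_t)=\frac1q\int_{\Sn}\rho_{S_qK}^{\,n-q}\,\frac{d}{dt}\rho_{S_qK_t}^q\,du$. Then I apply Fubini and use the symmetry $\rho_{S_qK}(u)=\rho_{S_qK}(-u)$ to merge the two endpoint contributions. The result is
\[
\frac{d}{dt}\Big|_{t=0}V(S_qK_t)=\frac{2n}{q}\int_{\partial K}f(\nu_K)\,(q+1)\,\Vt_q(K,S_qK,z)\,d\Haus^{n-1}(z)=\frac{2n}{q}\int_{\Sn}f\,d\Fq(K),
\]
using Definition~\ref{def_fractional_measure}. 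Finally, $\frac{d}{dt}\Phi_q=\frac qn V(S_qK)^{q/n-1}\frac{d}{dt}V(S_qK_t)$, which gives exactly $2\Phi_q(K)^{(q-n)/q}\int f\,d\Fq(K)$.

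The main obstacle is rigor, not the computation. Three points need care.
\begin{itemize}
\item The endpoint calculation presumes $h_{K_t}=h_K+tf$. For a general Wulff shape this is only known $\surf$-almost everywhere to first order, so I would use Aleksandrov's lemma together with a dominated convergence bound on difference quotients of chord lengths. The tangential chords with $u\cdot\nu\approx0$ have unbounded speed $1/|u\cdot\nu|$, but they form a null set after the Jacobian cancellation.
\item For $-1<q<0$ the factor $X^q$ blows up for short chords near the boundary. Here I would use the finiteness statement of Theorem~\ref{thm_fractional_integral} to supply an integrable majorant.
\item The differentiation must be interchanged with the $u$-integral; this follows once the bounds above are uniform in $u$.
\end{itemize}
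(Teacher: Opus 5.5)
The paper does not prove this theorem. Both identities are imported as black boxes: the first from Xi--Zhao~\cite[Theorem~1.1]{XiZhao}, the second as Aleksandrov's lemma from~\cite{Schneider}. So deferring to the literature is exactly what the paper does. Your formal chord computation is also correct:
\begin{itemize}
\item the endpoint speed $f(\nu_K)/|u\cdot\nu_K|$ cancels the Jacobian;
\item the symmetry $\rho_{S_qK}(u)=\rho_{S_qK}(-u)$ merges the two endpoint contributions into $2n\Vt_q(K,S_qK,z)$;
\item the chain rule gives exactly $2\Phi_q(K)^{(q-n)/q}$;
\item taking $f=h_K$ yields $(n+q)\Phi_q(K)$, consistent with Theorem~\ref{thm_fractional_integral} and with $\Phi_q(\lambda K)=\lambda^{n+q}\Phi_q(K)$.
\end{itemize}
As a verification, however, your sketch has gaps at identifiable points.

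\textbf{Volume formula.} The telescoping identity is fine. But for $1\le j\le n-2$ the measures $S(K_t[j],K[n-1-j])$ move with $t$, while $(h_{K_t}-h_K)/t\to f$ only $\surf(K,\cdot)$-almost everywhere. Weak convergence together with $h_{K_t}\le h_K+tf$ gives only $\limsup_{t\downarrow0}\le\int f\,d\surf(K)$, plus the reverse one-sided bound as $t\uparrow0$. The missing inequality needs a separate argument, such as the standard one via Minkowski's first inequality, or the radial-function route of Huang--Lutwak--Yang--Zhang.

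\textbf{Formula for $\frac{d}{dt}I_u(K_t)$.} This is the more serious gap: your formula is false for some directions, so the tangential chords are not disposed of by the Jacobian cancellation. Take the cylinder $K=B^{n-1}\times[-H/2,H/2]$, $u=e_n$ and $f\equiv1$, so $K_t=K+tB^n$. New chords of length about $H$ appear over an annulus of width $t$. The lateral surface therefore contributes $H^{q+1}\Haus^{n-2}(S^{n-2})$ to the true derivative, whereas your formula assigns it $(q+1)H^{q+1}\Haus^{n-2}(S^{n-2})$.

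You must first restrict to the full-measure set of $u$ with $\surf(K,u^\perp\cap\Sn)=0$; Fubini gives $\int_{\Sn}\surf(K,u^\perp\cap\Sn)\,du=0$. You then have to prove that the difference quotients of $X_{K_t}(\cdot,u)^{q+1}$ do not concentrate near the relative boundary of $K\mid u^\perp$. Pointwise differentiability of the chord endpoints needs an envelope argument at endpoints with a unique normal. Aleksandrov's lemma, which concerns support values in single directions, gives only the upper bound.

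\textbf{Negative parameters.} The same non-concentration, not the size of $X^q$, is the real issue for $-1<q<0$. Finiteness of $\Fq(K,\Sn)$ makes the limiting integrand integrable, but it does not dominate the difference quotients.

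\textbf{Interchange with the $u$-integral.} By contrast, your third concern is easy. With $0\in\interior K$ one has $(1-c|t|)K\subseteq K_t\subseteq(1+c|t|)K$. Since $I_u$ is increasing under inclusion and homogeneous of degree $n+q$, this gives $|I_u(K_t)-I_u(K)|\le C|t|$ uniformly in $u$.

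Closing the first two gaps is precisely the content of the results the paper cites.
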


\section{Stationarity and the auxiliary functionals}\label{sec_energies}

We first identify the proportionality equation with stationarity of a scale-invariant functional. We then derive extremal representations in terms of overlap integrals, allowing stationarity to be compared with monotonicity under continuous symmetrization.

\begin{definition}\label{def_stationary}
Define the scale-invariant functional
\begin{equation}\label{eq_A}
\mathcal A_q(K)=\frac{\Phi_q(K)}{V(K)^{(n+q)/n}}.
\end{equation}
A convex body $K$ is Wulff stationary for $\mathcal A_q$ if, for every $f\in C(\Sn)$, the two-sided derivative of $\mathcal A_q([h_K+tf])$ at $t=0$ vanishes.
\end{definition}

\begin{remark}\label{rem_variation_source}
The stationarity equation for the normalization~\eqref{eq_A} follows from the Xi--Zhao first variation formula~\cite{XiZhao} and Aleksandrov's volume variation formula~\cite{Schneider}.
\end{remark}

\begin{proposition}\label{prop_degenerate_index}
For $q=n$, the functional $\mathcal A_n$ is constant, and every convex body satisfies
\begin{equation}\label{eq_qn}
F_n^{(a)}(K,\cdot)=(n+1)V(K)\surf(K,\cdot).
\end{equation}
\end{proposition}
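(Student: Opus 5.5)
We have to prove two things about $q=n$: that $\mathcal A_n$ is constant, and the measure identity~\eqref{eq_qn}.

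\textbf{Constancy of $\mathcal A_n$.} Take $q=n$ in~\eqref{eq_Sq} and integrate in polar coordinates:
\[
V(S_nK)=\frac1n\int_{\Sn}\rho_{S_nK}(u)^n\,du=\frac1n\int_{\Sn}\int_{u^\perp}X_K(y,u)^{n+1}\,dy\,du .
\]
Equivalently, by~\eqref{eq_SR}, $S_nK=[(n+1)V(K)]^{1/n}R_nK$, so $V(S_nK)=(n+1)V(K)V(R_nK)$. The identity $V(R_nK)=V(K)$ follows from
\[
V(R_nK)=\frac1n\int_{\Sn}\frac1{V(K)}\int_K\rho_{K,x}(u)^n\,dx\,du=\frac1{V(K)}\int_K V(K)\,dx=V(K),
\]
where we used $\frac1n\int_{\Sn}\rho_{K,x}(u)^n\,du=V(K)$ for each interior point $x$. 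Hence $\Phi_n(K)=V(S_nK)=(n+1)V(K)^2$. Dividing by $V(K)^{(n+n)/n}=V(K)^2$ gives $\mathcal A_n\equiv n+1$.

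\textbf{Measure identity: direct route.} This avoids differentiation. By definition, $\Vt_n(K,S_nK,z)=\frac1n\int\rho_{K,z}(u)^n\,du$, because the factor $\rho_L^{\,n-q}$ equals $1$ when $q=n$. For $z\in\partial K$, the set $\{u:\rho_{K,z}(u)>0\}$ consists of directions pointing into $K$ (up to a null set). The integral is therefore the polar-coordinate volume of $K$ seen from the boundary point $z$, which equals $V(K)$: the star-shaped decomposition from $z$ covers $K$. By~\eqref{eq_F},
\[
F^{(a)}_n(K,\omega)=(n+1)V(K)\,\Haus^{n-1}(\nu_K^{-1}(\omega))=(n+1)V(K)\,\surf(K,\omega),
\]
which is~\eqref{eq_qn}.

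\textbf{Measure identity: variational cross-check.} Alternatively, differentiate $\Phi_n([h_K+tf])=(n+1)V([h_K+tf])^2$ at $t=0$. By~\eqref{eq_variation_Phi} with $q=n$, the left side gives $2\int f\,dF^{(a)}_n$. By~\eqref{eq_variation_volume}, the right side gives $2(n+1)V(K)\int f\,d\surf$. Since this holds for all $f\in C(\Sn)$, the Riesz representation theorem forces the two measures to agree.

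\textbf{Main obstacle.} The only delicate point is the polar-coordinate volume formula from a boundary point $z$. We must check that the directions with $\rho_{K,z}(u)>0$ sweep out $K$ up to a null set. This holds because $K$ is convex: every $x\in K\setminus\{z\}$ lies on the segment from $z$ in the direction $(x-z)/|x-z|$, and the radial function along that direction is positive.
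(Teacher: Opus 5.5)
Your proof is correct, and its main route to \eqref{eq_qn} differs from the paper's. The paper takes $\Phi_n(K)=(n+1)V(K)^2$ from Xi--Zhao and concludes $\mathcal A_n\equiv n+1$. It then obtains \eqref{eq_qn} exactly as in your ``variational cross-check'': differentiate $\Phi_n([h_K+tf])=(n+1)V([h_K+tf])^2$ using \eqref{eq_variation_Phi} and \eqref{eq_variation_volume}, and compare the two measures. You instead derive $\Phi_n(K)=(n+1)V(K)^2$ yourself from \eqref{eq_SR}. More significantly, you compute the density in \eqref{eq_F} directly. At $q=n$ the factor $\rho_{S_nK}^{\,n-q}$ disappears, so
\[
\Vt_n(K,S_nK,z)=\frac1n\int_{\Sn}\rho_{K,z}^n\,du=V(K)\qquad\text{for every } z\in K,
\]
because a convex body is star-shaped about each of its points.

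This direct route needs only the definition and the polar volume formula, so it does not rely on Theorem~\ref{thm_first_variation} at $q=n$. It also gives a stronger pointwise statement: $z\mapsto\Vt_n(K,S_nK,z)$ is constant on $\partial K$ for every convex body. In other words, the boundary constancy condition of Question~\ref{q_smooth} holds trivially at $q=n$, which explains why $q=n$ must be excluded in Corollary~\ref{cor_boundary_constant}. The paper's route is shorter once the variation formula is available, and it presents \eqref{eq_qn} as the stationarity equation of the constant functional $\mathcal A_n$, matching the framework of Proposition~\ref{prop_stationary}.

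Both of your computations rest on the same identity $\frac1n\int_{\Sn}\rho_{K,x}^n\,du=V(K)$. For a boundary point $x$, two small points make it fully rigorous. You also need the reverse inclusion $[x,x+\rho_{K,x}(u)u]\subset K$, which follows from convexity. And $\rho_{K,x}$ is measurable because it is upper semicontinuous. Neither is a real gap.
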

\begin{proof}
By~\cite[Definition~2.1]{XiZhao}, $\Phi_n(K)=(n+1)V(K)^2$. Substitution into~\eqref{eq_A} gives $\mathcal A_n(K)=n+1$. Comparing~\eqref{eq_variation_Phi} and~\eqref{eq_variation_volume} then gives~\eqref{eq_qn}.
\end{proof}

\begin{proposition}\label{prop_stationary}
Let $q>-1$, $q\ne0,n$. A convex body $K$ is Wulff stationary for $\mathcal A_q$ if and only if
\begin{equation}\label{eq_stationary}
\Fq(K,\cdot)=c_q(K)\surf(K,\cdot),\qquad
c_q(K)=\frac{n+q}{2nV(K)}\Phi_q(K)^{n/q}.
\end{equation}
If the two measures are proportional with an unspecified constant, that constant necessarily equals $c_q(K)$.
\end{proposition}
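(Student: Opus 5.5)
The plan is to differentiate $\mathcal A_q([h_K+tf])$ at $t=0$ with the quotient rule, using Theorem~\ref{thm_first_variation}. First I translate $K$ so that the origin lies in its interior; all quantities are translation invariant. Write $K_t=[h_K+tf]$. Both $\Phi_q(K_t)$ and $V(K_t)$ are differentiable at $t=0$ by \eqref{eq_variation_Phi} and \eqref{eq_variation_volume}, and $V(K)>0$, $\Phi_q(K)>0$. Therefore
\[
\left.\frac d{dt}\right|_{t=0}\mathcal A_q(K_t)
=\frac{1}{V(K)^{(n+q)/n}}\Bigl(2\Phi_q(K)^{(q-n)/q}\int_{\Sn}f\,d\Fq(K)-\frac{n+q}{n}\frac{\Phi_q(K)}{V(K)}\int_{\Sn}f\,d\surf(K)\Bigr).
\]
Multiplying by the positive factor $V(K)^{(n+q)/n}\Phi_q(K)^{(n-q)/q}/2$ shows that the derivative vanishes exactly when
\[
\int_{\Sn}f\,d\Fq(K)=\frac{n+q}{2nV(K)}\Phi_q(K)^{n/q}\int_{\Sn}f\,d\surf(K)=c_q(K)\int_{\Sn}f\,d\surf(K).
\]

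For the equivalence, note that if this holds for every $f\in C(\Sn)$, then the two finite Borel measures agree by the Riesz representation theorem. Finiteness of $\Fq(K,\cdot)$ is given by Theorem~\ref{thm_fractional_integral}. Conversely, if \eqref{eq_stationary} holds, the integrals agree for every $f$ and the derivative vanishes. The derivative is two-sided because Theorem~\ref{thm_first_variation} gives genuine derivatives at $t=0$.

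For the last claim, suppose $\Fq(K,\cdot)=c\,\surf(K,\cdot)$ for some constant $c$. Integrating $h_K$ against both sides and using \eqref{eq_Xi_integral} and \eqref{eq_minkowski_integral} gives
\[
\frac{n+q}{2}\Phi_q(K)^{n/q}=c\,nV(K),
\]
so $c=c_q(K)$. Note that $h_K$ is continuous, and it is positive once the origin is interior, though positivity is not actually needed here.

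There is no serious obstacle. The only points needing care are the sign of the exponent $(q-n)/q$, which is irrelevant since $\Phi_q(K)>0$, and the remark that the hypothesis $q\ne n$ is not needed for the computation itself; it only excludes the degenerate case of Proposition~\ref{prop_degenerate_index}.
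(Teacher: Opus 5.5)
Your proposal is correct and follows the paper's proof: the paper differentiates $\log\mathcal A_q$ rather than applying the quotient rule, which is only a cosmetic difference, then identifies the measures by testing against all of $C(\Sn)$. It determines the constant exactly as you do, by integrating $h_K$ and using \eqref{eq_Xi_integral} and \eqref{eq_minkowski_integral}.
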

\begin{proof}
Taking the logarithm of~\eqref{eq_A} and applying~\eqref{eq_variation_Phi}--\eqref{eq_variation_volume}, we obtain
\[
\left.\frac d{dt}\right|_{t=0}\log\mathcal A_q([h_K+tf])
=\int_{\Sn}f\,d\left[2\Phi_q(K)^{-n/q}\Fq(K)
-\frac{n+q}{nV(K)}\surf(K)\right].
\]
The expression in brackets is a finite signed Borel measure. Its integral against every continuous function $f$ vanishes if and only if the measure itself vanishes, which is precisely~\eqref{eq_stationary}. Alternatively, substitution of $\Fq(K)=c\surf(K)$ into~\eqref{eq_Xi_integral}, together with~\eqref{eq_minkowski_integral}, gives $cnV(K)=(n+q)\Phi_q(K)^{n/q}/2$.
\end{proof}

To normalize the volume of the auxiliary set independently of the convex body, fix $q$ and write
\begin{equation}\label{eq_normalized_auxiliary}
s_H=\rho_{S_qH},\quad J(H)=\int_{\Sn}s_H^n\,du,\quad
M(H)=J(H)^{q/n}=n^{q/n}\Phi_q(H),\quad L_H=J(H)^{-1/n}S_qH.
\end{equation}
Then $V(L_H)=1/n$ and $L_H=-L_H$. Both extremal representations below are attained at $L_H$.

The representations for positive and negative parameters both start from overlap volume. The overlap of a convex body $H$ with its translate $H+z$ is described by its covariogram $G_H$. For a bounded measurable set $L$ and $r>0$, define
\begin{equation}\label{eq_G_T}
G_H(z)=V(H\cap(H+z)),\qquad
T_r(H,L)=\int_{rL}G_H(z)\,dz
=\int_H\int_H\ind_{rL}(x-y)\,dx\,dy.
\end{equation}
Computing overlap lengths along lines parallel to $u$ gives
\begin{equation}\label{eq_covariogram_fiber}
G_H(su)=\int_{u^\perp}(X_H(y,u)-s)_+\,dy\qquad(s\geq0).
\end{equation}
\subsection{Integral representations for positive parameters}

For $q>0$, radial integration of the covariogram gives the following auxiliary functional. For an origin-symmetric convex body $L$, define
\[
E_q(H,L)=\int_H\int_H\|x-y\|_L^{q-n}\,dx\,dy.
\]
This integral is finite because the kernel is locally integrable near the diagonal and $H$ is bounded.

\begin{lemma}\label{lem_positive_energy}
For $q>0$, $q\ne n$, and an origin-symmetric convex body $L$,
\begin{equation}\label{eq_E_identity}
q(q+1)E_q(H,L)=\int_{\Sn}s_H(u)^q\rho_L(u)^{n-q}\,du.
\end{equation}
Under the normalization $V(L)=1/n$,
\begin{equation}\label{eq_positive_extreme}
M(H)=
\begin{cases}
q(q+1)\max_L E_q(H,L),&0<q<n,\\
q(q+1)\min_L E_q(H,L),&q>n.
\end{cases}
\end{equation}
The extrema are taken over origin-symmetric convex bodies $L$ of volume $1/n$, and both are attained at $L_H$. Moreover,
\begin{align}
E_q(H,L)&=(n-q)\int_0^\infty r^{q-n-1}T_r(H,L)\,dr,
&&0<q<n,\label{eq_layer_positive}\\
E_q(H,L)&=(q-n)\int_0^\infty r^{q-n-1}[V(H)^2-T_r(H,L)]\,dr,
&&q>n.\label{eq_layer_supercritical}
\end{align}
\end{lemma}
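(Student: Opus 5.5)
The plan is to prove \eqref{eq_E_identity} by polar coordinates in the difference variable, then read off the extremal statement from H\"older's inequality, and finally get the layer-cake formulas from the definition of $T_r$.

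\textbf{Step 1: the identity \eqref{eq_E_identity}.} Substituting $z=x-y$ gives
\[
E_q(H,L)=\int_{\mathbb R^n}G_H(z)\|z\|_L^{q-n}\,dz .
\]
In polar coordinates $z=su$ we have $\|su\|_L=s/\rho_L(u)$, so
\[
E_q(H,L)=\int_{\Sn}\rho_L(u)^{n-q}\int_0^\infty s^{q-1}G_H(su)\,ds\,du .
\]
Inserting the fiber formula \eqref{eq_covariogram_fiber} and using Tonelli (everything is nonnegative),
\[
\int_0^\infty s^{q-1}(X-s)_+\,ds=\frac{X^{q+1}}{q(q+1)} .
\]
Integrating over $y\in u^\perp$ and applying \eqref{eq_Sq} yields
\[
\int_0^\infty s^{q-1}G_H(su)\,ds=\frac{s_H(u)^q}{q(q+1)},
\]
which is \eqref{eq_E_identity}. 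Finiteness needs $q>0$, which gives integrability at $s=0$.

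\textbf{Step 2: the extremal representation \eqref{eq_positive_extreme}.} Write $V(L)=\frac1n\int\rho_L^n$. For $0<q<n$, H\"older with exponents $n/q$ and $n/(n-q)$ gives
\[
\int s_H^q\rho_L^{n-q}\le \Bigl(\int s_H^n\Bigr)^{q/n}\Bigl(\int\rho_L^n\Bigr)^{(n-q)/n}=J(H)^{q/n}=M(H),
\]
since $\int\rho_L^n=nV(L)=1$. For $q>n$ the exponent $n-q$ is negative, and the reverse H\"older inequality gives $\ge M(H)$. The cleanest route is the two-function H\"older
\[
\int s_H^n=\int (s_H^q\rho_L^{n-q})^{n/q}\rho_L^{n(q-n)/q}\le\Bigl(\int s_H^q\rho_L^{n-q}\Bigr)^{n/q}\Bigl(\int\rho_L^n\Bigr)^{(q-n)/q}.
\]
Equality holds when $\rho_L$ is proportional to $s_H$, i.e.\ at $L=L_H$; indeed then $\int s_H^q\rho_{L_H}^{n-q}=J^{-(n-q)/n}J=M(H)$.

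The one point needing care, and the main obstacle, is that $L_H$ must be admissible in the class of origin-symmetric convex bodies. Symmetry is automatic. For $q>0$, Theorem~\ref{thm_positive_convexity} shows that $S_qH$ is convex, so $L_H$ is an origin-symmetric convex body of volume $1/n$. Hence the extremum over the class is attained.

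\textbf{Step 3: the layer-cake formulas.} For $0<q<n$ write
\[
\|z\|_L^{q-n}=(n-q)\int_{\|z\|_L}^\infty r^{q-n-1}\,dr=(n-q)\int_0^\infty r^{q-n-1}\ind_{rL}(z)\,dr .
\]
Integrating against $\ind_H(x)\ind_H(y)$ with $z=x-y$ and applying Tonelli and \eqref{eq_G_T} gives \eqref{eq_layer_positive}. For $q>n$ write
\[
\|z\|_L^{q-n}=(q-n)\int_0^{\|z\|_L}r^{q-n-1}\,dr=(q-n)\int_0^\infty r^{q-n-1}\bigl(1-\ind_{rL}(z)\bigr)\,dr .
\]
Integrating gives $V(H)^2-T_r(H,L)$ in place of $T_r$, which is \eqref{eq_layer_supercritical}. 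The boundary $\{\|z\|_L=r\}$ has measure zero, so the choice of closed versus open $rL$ is irrelevant. All integrands are nonnegative, so no convergence issues arise beyond the finiteness already established in Step 1.
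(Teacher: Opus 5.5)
Your proposal is correct and follows essentially the same route as the paper: polar coordinates with the fiber formula \eqref{eq_covariogram_fiber} and the moment identity give \eqref{eq_E_identity}, an integral inequality with equality at $L_H$ (admissible by Theorem~\ref{thm_positive_convexity}) gives the extremal representation, and the layer-cake decomposition of the kernel gives \eqref{eq_layer_positive}--\eqref{eq_layer_supercritical}. The only difference is cosmetic: you use H\"older's inequality, rearranged for $q>n$, where the paper applies Jensen's inequality to $Z=s_H^n/\rho_L^n$ with respect to the probability measure $\rho_L^n\,du$, and this is the same estimate.
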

\begin{proof}[Proof sketch]
Polar coordinates, identity~\eqref{eq_covariogram_fiber} and Tonelli's theorem give~\eqref{eq_E_identity} through the elementary moment identity $\int_0^X(X-s)s^{q-1}ds=X^{q+1}/(q(q+1))$.
For the extremal representation, the volume normalization makes $d\mu=\rho_L^n\,du$ a probability measure, and Jensen's inequality applied to $Z=s_H^n/\rho_L^n$ with exponent $a=q/n$ gives the bounds in~\eqref{eq_positive_extreme}, with equality for $L=L_H$; a layer-cake decomposition of $\|x-y\|_L^{q-n}$ gives~\eqref{eq_layer_positive} and~\eqref{eq_layer_supercritical}.
The details are in Appendix~\ref{app_energy_pos}.
\end{proof}

\begin{remark}\label{rem_energy_context}
The relation between covariograms and chord integrals appears in Gardner--Zhang and Xi--Zhao~\cite{GardnerZhang,XiZhao}; related fractional energies are studied by Haddad--Ludwig~\cite{HaddadLudwig}. The identities and Jensen argument above use our volume normalization.
\end{remark}

\subsection{A minimization formula for negative parameters}

For $-1<q<0$, the kernel defining $E_q$ has a nonintegrable singularity along the diagonal. Replacing the overlap volume by its deficit gives a finite auxiliary functional on bounded measurable sets. For such a set $L$, define
\begin{equation}\label{eq_N_definition}
N_q(H,L)=\int_0^\infty r^{q-n-1}
\bigl[V(H)V(rL)-T_r(H,L)\bigr]\,dr,
\qquad C_q=-q(q+1)(n-q)>0.
\end{equation}
The bracketed expression must be integrated as a whole, since the integrals of its two terms may diverge separately.

\begin{lemma}\label{lem_negative_measurable}
For every bounded measurable set $L$, the value $N_q(H,L)$ is finite and satisfies
\begin{equation}\label{eq_N_measurable}
N_q(H,L)=-\frac1{q(q+1)}\int_L\|z\|_{S_qH}^{-q}\,dz.
\end{equation}
Among bounded measurable sets of volume $1/n$, its minimum is attained at $L_H$. In particular,
\begin{equation}\label{eq_negative_extreme}
M(H)=C_q\min_{V(L)=1/n}N_q(H,L).
\end{equation}

\end{lemma}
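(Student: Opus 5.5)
We first establish finiteness and the identity~\eqref{eq_N_measurable}, and then deduce the minimization statement from a bathtub-type rearrangement argument. Write the bracket as an integral over $rL$:
\[
V(H)V(rL)-T_r(H,L)=\int_{rL}\bigl[V(H)-G_H(z)\bigr]\,dz=r^n\int_L\bigl[V(H)-G_H(rw)\bigr]\,dw .
\]
The integrand is nonnegative because $G_H\le G_H(0)=V(H)$. Tonelli's theorem therefore lets us swap the $r$ and $w$ integrations without any sign issue:
\[
N_q(H,L)=\int_L\int_0^\infty r^{q-1}\bigl[V(H)-G_H(rw)\bigr]\,dr\,dw .
\]
For fixed $w=|w|u$, substitute $s=r|w|$. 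By~\eqref{eq_covariogram_fiber},
\[
V(H)-G_H(su)=\int_{u^\perp}\bigl[X_H(y,u)-(X_H(y,u)-s)_+\bigr]\,dy=\int_{u^\perp}\min(X_H(y,u),s)\,dy .
\]
Tonelli again reduces the inner integral to the one-dimensional moment
\[
\int_0^\infty s^{q-1}\min(X,s)\,ds=\frac{X^{q+1}}{q+1}-\frac{X^{q+1}}{q}=-\frac{X^{q+1}}{q(q+1)} .
\]
The first term is $\int_0^X s^q\,ds$ and the second is $X\int_X^\infty s^{q-1}\,ds$; the latter converges precisely because $q<0$, and the former because $q>-1$.

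Integrating over $y$ gives $-\rho_{S_qH}(u)^q/(q(q+1))$. Restoring the factor $|w|^{-q}$ from the substitution, we obtain
\[
|w|^{-q}\rho_{S_qH}(u)^{q}=\|w\|_{S_qH}^{-q} .
\]
This proves~\eqref{eq_N_measurable}. Finiteness follows at once: $-q>0$, so $\|\cdot\|_{S_qH}^{-q}$ is locally bounded, since $\rho_{S_qH}$ is positive and continuous, and $L$ is bounded. The value is finite and positive, consistent with $C_q>0$.

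For the minimization, note that $-1/(q(q+1))>0$. We must therefore minimize $\int_L\phi$ with $\phi(z)=\|z\|_{S_qH}^{-q}$ over sets of volume $1/n$. The function $\phi$ is positively homogeneous of degree $-q>0$, so its sublevel sets $\{\phi\le t\}=t^{-1/q}S_qH$ are dilates of $S_qH$. The bathtub principle gives the minimizer as the sublevel set of volume $1/n$, which is exactly $L_H=J(H)^{-1/n}S_qH$. Indeed, for any $L$ with $V(L)=V(L_H)$, with $t$ the threshold value of $\phi$ on $\partial L_H$,
\[
\int_L\phi-\int_{L_H}\phi=\int_{L\setminus L_H}(\phi-t)+\int_{L_H\setminus L}(t-\phi)\ge0 .
\]

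Finally, we evaluate at $L_H$ using polar coordinates:
\[
\int_{L_H}\|z\|_{S_qH}^{-q}\,dz=\int_{\Sn}\rho_{S_qH}^{q}\int_0^{\rho_{L_H}}r^{n-q-1}\,dr\,du=\frac{J(H)^{-(n-q)/n}}{n-q}\int_{\Sn}s_H^{n}\,du=\frac{J(H)^{q/n}}{n-q}.
\]
Multiplying by $-1/(q(q+1))$ and then by $C_q=-q(q+1)(n-q)$ yields $M(H)$, which is~\eqref{eq_negative_extreme}.

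The main obstacle is the first step: the two pieces $V(H)V(rL)$ and $T_r(H,L)$ each diverge against $r^{q-n-1}$ near $r=0$. Writing the bracket as $\int_{rL}(V(H)-G_H)$ before any exchange of integrals is what keeps every integrand nonnegative and makes Tonelli's theorem applicable.
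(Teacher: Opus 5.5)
Your proposal is correct and follows the paper's proof essentially step for step. The shared steps are: rewriting the bracket as $\int_{rL}(V(H)-G_H)$ so that Tonelli applies to a nonnegative deficit; the substitution $s=r|w|$ together with the moment identity $\int_0^\infty\min\{s,X\}s^{q-1}\,ds=-X^{q+1}/(q(q+1))$; the sublevel-set comparison identifying $L_H$ as the minimizer; and the polar evaluation $\int_{L_H}\|z\|_{S_qH}^{-q}\,dz=M(H)/(n-q)$. You also check explicitly that each half of the moment integral converges precisely because $-1<q<0$, a detail the paper leaves implicit.
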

\begin{proof}[Proof sketch]
By~\eqref{eq_covariogram_fiber} the deficit $D_H(su)=V(H)-G_H(su)=\int_{u^\perp}\min\{s,X_H(y,u)\}\,dy$ is the measure of the part of the chord that is lost when $H$ is translated by $su$.
Interchanging the order of integration and using the elementary identity
\[
\int_0^\infty\min\{s,X\}s^{q-1}\,ds=-\frac{X^{q+1}}{q(q+1)}\qquad(X>0,\ -1<q<0)
\]
turns the defining integral~\eqref{eq_N_definition} into~\eqref{eq_N_measurable}; the resulting integrand is continuous, nonnegative and vanishes at the origin, which gives finiteness.
Since $-q>0$, the set $L_H=\lambda S_qH$ is the sublevel set of $w_H(z)=\|z\|_{S_qH}^{-q}$ at level $\lambda^{-q}$, so a sublevel-set comparison proves its minimality among sets of volume $1/n$, and multiplying the identity by $C_q$ gives~\eqref{eq_negative_extreme}.
The details are in Appendix~\ref{app_energy_neg}.
\end{proof}

For negative parameters, the minimizing set $L_H$ must also have the regularity required for continuous symmetrization. We next prove that its radial function is Lipschitz.

\begin{lemma}\label{lem_negative_lipschitz}
For every convex body $H$ and $-1<q<0$, the body $S_qH$ is an origin-symmetric Lipschitz star body.
\end{lemma}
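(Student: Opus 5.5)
Write $s(u)=\rho_{S_qH}(u)$. By~\eqref{eq_Sq} we have $s(u)=F(u)^{1/q}$, where
\[
F(u)=\int_{u^\perp}X_H(y,u)^{q+1}\,dy .
\]
Origin symmetry is immediate, since $X_H(y,-u)=X_H(y,u)$ and $(-u)^\perp=u^\perp$. Because $q<0$, the map $t\mapsto t^{1/q}$ is smooth and Lipschitz on any compact subinterval of $(0,\infty)$. It therefore suffices to show three things: $F$ is bounded above on $\Sn$, $F$ is bounded below by a positive constant on $\Sn$, and $F$ is Lipschitz on $\Sn$. Positivity and continuity of $s$ are already asserted in Definition~\ref{def_chord_body}, so the content of the lemma is the Lipschitz bound.

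I would first rewrite $F$ as an integral over $H$, so that the direction appears only in a smooth kernel. Slicing along lines parallel to $u$ gives $\int_H \rho_{H,x}(u)^q\,dx=\frac1{q+1}F(u)$, which is~\eqref{eq_SR}. A cleaner form is the covariogram representation from the proof of Lemma~\ref{lem_negative_measurable}:
\[
-\frac{F(u)}{q(q+1)}=\int_0^\infty s^{q-1}D_H(su)\,ds,\qquad D_H(z)=V(H)-G_H(z).
\]
It is classical that $G_H$ is Lipschitz on $\mathbb R^n$ with constant at most $\Haus^{n-1}(\partial H)$, because $V(H\setminus(H+z))\le |z|\,\Haus^{n-1}(\partial H)$ up to a constant. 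Hence $|D_H(su)-D_H(sv)|\le C s|u-v|$. In addition $D_H\le V(H)$, and $D_H(su)=V(H)$ once $s>\operatorname{diam}H$.

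The estimate of $|F(u)-F(v)|$ splits the $s$-integral at $R=\operatorname{diam}H$. On $[0,R]$ the difference is bounded by $C|u-v|\int_0^R s^{q}\,ds$, which is finite because $q>-1$. On $[R,\infty)$ both integrands equal $V(H)s^{q-1}$, so they cancel exactly. This gives a global Lipschitz bound $|F(u)-F(v)|\le C'|u-v|$.

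For the two-sided bounds, $D_H(su)\le \min\{Cs,V(H)\}$ gives an upper bound on $F$ that is uniform in $u$. The same formula also gives a uniform positive lower bound, because $D_H(su)\ge V(H)$ for $s\ge R$ and $\int_R^\infty s^{q-1}\,ds<\infty$. Composing $F$ with $t\mapsto t^{1/q}$ on the resulting compact interval then shows that $s$ is Lipschitz.

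The main obstacle is justifying the Lipschitz estimate for the deficit near $s=0$ with exactly the factor $s$. Only this factor makes the weight $s^{q-1}\cdot s=s^q$ integrable at the origin. I would handle it with the bound $V(H\triangle(H+z))\le |z|\,\Haus^{n-1}(\partial H)$ for convex $H$. This bound follows by integrating over the projection onto $z^\perp$: the chord-length loss $\min\{|z|,X_H(y,\hat z)\}$ is at most $|z|$ on a set whose $(n-1)$-measure is that of the projection, which in turn is at most $\frac12\Haus^{n-1}(\partial H)$. I would then apply this with $z=su-sv$, using $|G_H(a)-G_H(b)|\le V((H+a)\triangle(H+b))$.
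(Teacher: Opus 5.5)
Your proposal is correct and follows the paper's proof essentially step by step. The shared steps are the moment formula $s_H(u)^q=-q(q+1)\int_0^\infty s^{q-1}[V(H)-G_H(su)]\,ds$, the global Lipschitz bound on $G_H$ obtained by integrating along lines parallel to $z-w$, exact cancellation of the tails beyond $\operatorname{diam}H$, integrability of $s^q$ at the origin because $q>-1$, and composition with $a\mapsto a^{1/q}$ on a range bounded away from zero. The only minor difference is that you derive the positive lower bound on $F$ explicitly from the tail term $\int_R^\infty V(H)s^{q-1}\,ds$, whereas the paper takes it from the positivity and continuity of $s_H^q$ on the compact sphere.
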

\begin{proof}
The covariogram $G_H$ is globally Lipschitz. Indeed,
\[
|G_H(z)-G_H(w)|\leq V((H+z)\mathbin\triangle(H+w))\leq C_H|z-w|.
\]
The last inequality follows by integration along lines parallel to $z-w$: translating a section interval by $|z-w|$ produces a symmetric difference of length at most $2|z-w|$, and the projection areas of $H$ are uniformly bounded.
Choose $R$ larger than the diameter of $H$.
Then $G_H(su)=0$ for all $s\geq R$ and $u\in\Sn$.
The moment calculation in Lemma~\ref{lem_negative_measurable} gives
\[
s_H(u)^q=-q(q+1)\int_0^\infty s^{q-1}[V(H)-G_H(su)]\,ds.
\]
When comparing two directions, the tails cancel. Thus
\[
|s_H(u)^q-s_H(v)^q|
\leq -q(q+1)C_H|u-v|\int_0^R s^q\,ds.
\]
The last integral is finite because $q>-1$.
The continuous function $s_H^q$ has a positive lower bound $\eta_H$ on the compact sphere.
For $q<0$, the derivative of $a\mapsto a^{1/q}$ is bounded on $[\eta_H,\infty)$, so $s_H=(s_H^q)^{1/q}$ is Lipschitz as well.
Origin symmetry follows from $G_H(z)=G_H(-z)$.
\end{proof}

\begin{remark}\label{rem_negative_independence}
For negative parameters, we use the continuous symmetrization path for Lipschitz star bodies constructed in~\cite[Sections~4--5]{MSY}. Lemma~\ref{lem_negative_measurable} ensures that the deformed bounded measurable sets remain admissible competitors, while Lemma~\ref{lem_negative_lipschitz} supplies the regularity of the initial auxiliary body.
\end{remark}

\section{Continuous Steiner symmetrization}\label{sec_symmetrization}

We establish the two comparisons needed for rigidity.
First, strict convexity gives a uniform first-order expansion of the support function along the continuous Steiner path, allowing the stationarity formula to be applied to this path.
Second, simultaneous symmetrization of the convex body and the auxiliary set does not decrease the overlap integral.
If the chord midpoint function is not affine, the increase has a positive lower bound of order $t$.

\subsection{First variation of the support function}

Fix $e\in\Sn$ and write $Q=K\mid e^\perp$ and $D=\interior Q$. For $a\in D$, the line $a+\mathbb Re$ meets $K$ in a chord. Let $m(a)$ and $A(a)$ denote its midpoint coordinate in direction $e$ and its half-length, respectively:
\begin{equation}\label{eq_fiber}
K\cap(a+\mathbb Re)
=a+[m(a)-A(a),m(a)+A(a)]e,
\qquad A(a)>0.
\end{equation}
The lower endpoint function $f_-=m-A$ in~\eqref{eq_fiber} is convex, and the upper endpoint function $f_+=m+A$ is concave; both are continuous on $D$. Keeping chord lengths fixed and multiplying midpoint coordinates by $1-t$ defines the continuous Steiner path:
\begin{equation}\label{eq_Kt}
K_t\cap(a+\mathbb Re)
=a+[(1-t)m(a)-A(a),(1-t)m(a)+A(a)]e,
\qquad 0\leq t\leq1.
\end{equation}
The body $K_t$ is the closure of the union of these chords. Its upper and lower endpoint functions are concave and convex, respectively, since
\[
(1-t)m+A=(1-t/2)f_++(t/2)(-f_-),
\qquad
(1-t)m-A=(1-t/2)f_-+(t/2)(-f_+).
\]
Thus $K_t$ is convex, and Fubini's theorem gives $V(K_t)=V(K)$.

\begin{lemma}\label{lem_support_expansion}
If $K$ is strictly convex, the functions $m$ and $A$ extend continuously to $Q$, with $A=0$ on $\partial Q$. For each $v\in\Sn$, let $x(v)$ be the unique support point of $K$ in direction $v$, and define
\[
g(v)=-(v\cdot e)m(x(v)\mid e^\perp).
\]
Then $g\in C(\Sn)$ and
\begin{equation}\label{eq_support_o}
\|h_{K_t}-h_K-tg\|_\infty=o(t)\qquad(t\downarrow0).
\end{equation}
If $K$ also satisfies~\eqref{eq_stationary}, then
\begin{equation}\label{eq_Phi_o}
\Phi_q(K_t)-\Phi_q(K)=o(t).
\end{equation}
\end{lemma}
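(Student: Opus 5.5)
The proof has three parts: the continuous extension of $m$ and $A$, the uniform support expansion, and the transfer to $\Phi_q$ by a sandwich argument.

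\textbf{Extension of $m$ and $A$.} By strict convexity, each fiber of $K$ over a point $a\in\partial Q$ is a single point. Such a fiber lies in a supporting hyperplane of $K$ parallel to $e$, and this hyperplane meets $K$ in a single point. Hence $f_+$ and $f_-$ have a common limit at every $a\in\partial Q$. Upper semicontinuity of the fiber map, coming from compactness of $K$, then gives continuity of $m$ and $A$ on $Q$, with $A=0$ on $\partial Q$.

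\textbf{Continuity of $g$.} Strict convexity makes the support point $x(v)$ unique, and uniqueness makes $v\mapsto x(v)$ continuous. Composing with the continuous map $m$ on $Q$ shows $g\in C(\Sn)$.

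\textbf{Pointwise and uniform expansion.} Write points of $K_t$ as $a+(s-tm(a))e$ with $a+se\in K$, so that
\[
h_{K_t}(v)=\max_{y\in K}\bigl[y\cdot v-t(v\cdot e)m(y\mid e^\perp)\bigr],
\]
where $m$ is understood as its continuous extension to $Q$. This is the maximum over $K$ of $\phi_t=\phi_0+t\psi$, with $\phi_0(y)=y\cdot v$ and $\psi$ continuous and bounded uniformly in $v$.
\begin{itemize}
\item \emph{Lower bound.} Evaluating at $y=x(v)$ gives $h_{K_t}(v)\ge h_K(v)+tg(v)$.
\item \emph{Upper bound.} If $y_t$ is a maximizer, then $y_t\cdot v\ge h_K(v)-Ct$. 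By compactness and uniqueness of $x(v)$, the distance $|y_t-x(v)|$ tends to $0$ uniformly in $v\in\Sn$. Here I would argue by contradiction with sequences $v_k$, $t_k\to0$. Then $h_{K_t}(v)\le h_K(v)+t\psi(y_t)$, and $\psi(y_t)-\psi(x(v))\to0$ uniformly by uniform continuity of $m$ on $Q$.
\end{itemize}
Together these give $\|h_{K_t}-h_K-tg\|_\infty=o(t)$, which is \eqref{eq_support_o}.

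\textbf{Transfer to $\Phi_q$ (main obstacle).} $K_t$ is not literally the Wulff shape $[h_K+tg]$, and the variation formula of Theorem~\ref{thm_first_variation} is stated only along Wulff paths. I would use monotonicity of $\Phi_q$ under inclusion, which is clear from \eqref{eq_Sq} since chord lengths increase with the body, together with a sandwich. Fix $\varepsilon>0$. For small $t$, \eqref{eq_support_o} gives
\[
h_K+t(g-\varepsilon)\le h_{K_t}\le h_K+t(g+\varepsilon).
\]
Since every body is the Wulff shape of its own support function, this yields
\[
[h_K+t(g-\varepsilon)]\subseteq K_t\subseteq[h_K+t(g+\varepsilon)].
\]
Applying \eqref{eq_variation_Phi} to the continuous functions $g\pm\varepsilon$ gives
\[
\limsup_{t\downarrow0}\frac{\Phi_q(K_t)-\Phi_q(K)}{t}\le 2\Phi_q(K)^{(q-n)/q}\int_{\Sn}(g+\varepsilon)\,d\Fq(K),
\]
and the corresponding bound for the $\liminf$ with $g-\varepsilon$.

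Under \eqref{eq_stationary} we have $\int g\,d\Fq=c_q(K)\int g\,d\surf$. By \eqref{eq_variation_volume} and the same sandwich applied to $V$, this equals $c_q(K)$ times the derivative of $V(K_t)$ at $t=0$. That derivative is $0$ because $V(K_t)=V(K)$ for all $t$. Letting $\varepsilon\to0$ and using finiteness of $\Fq(K)$ gives \eqref{eq_Phi_o}.

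The delicate points are the uniform convergence of maximizers in the upper bound and the monotonicity of $\Phi_q$ for $q<0$. When $q<0$, monotonicity still holds because $\rho_{S_qK}^q$ increases while $q<0$, so $\rho_{S_qK}$ decreases. This would break the sandwich inequalities as written, but the inclusions simply reverse the direction of the bounds, and the same limit argument applies with the roles of $g\pm\varepsilon$ swapped.
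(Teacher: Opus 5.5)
Your proposal is correct and follows the paper's proof essentially step by step: the singleton-fiber argument that extends $f_\pm$ continuously to $Q$, the representation of $h_{K_t}(v)$ as a maximum over $K$ of $y\cdot v-t(v\cdot e)m(y\mid e^\perp)$ with the competitor $x(v)$ and a maximizer that converges to $x(v)$ uniformly in $v$, and the monotone sandwich $[h_K+t(g-\varepsilon)]\subseteq K_t\subseteq[h_K+t(g+\varepsilon)]$ combined with $V(K_t)=V(K)$ and \eqref{eq_stationary}. One correction to your last paragraph: for $q<0$ no swap of $g\pm\varepsilon$ is needed, because the exponent $q/n<0$ in $\Phi_q=V(S_qK)^{q/n}$ reverses the inequality a second time, so $\Phi_q$ is increasing under inclusion for every admissible $q$ and the sandwich applies exactly as written (the squeeze as $\varepsilon\downarrow0$ would in any case give the same limit).
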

\begin{proof}
For $a\in\partial Q$, choose a nonzero supporting vector $\xi\in e^\perp$ such that $\xi\cdot a=\max_{b\in Q}\xi\cdot b$.
The section $K\cap(a+\mathbb Re)$ lies in the supporting hyperplane $\{x:\xi\cdot x=\xi\cdot a\}$ of $K$ and is a singleton by strict convexity.
If $a_j\in D$ and $a_j\to a$, every accumulation point of the corresponding upper and lower endpoints in the compact set $K$ belongs to this singleton.
Both endpoint sequences therefore converge to the same point.
Consequently, $f_\pm$ extend continuously to $Q$ and agree on $\partial Q$.
This proves the asserted extensions of $m,A$ and the boundary identity $A=0$.

Strict convexity also gives uniqueness of the support point $x(v)$ in every direction $v$. Compactness and uniqueness imply that $v\mapsto x(v)$ is continuous. Set
\[
\beta(v,x)=-(v\cdot e)m(x\mid e^\perp).
\]
Define the map that translates each chord by
\[
\mathcal T_t(x)=x-t\,m(x\mid e^\perp)e,\qquad x\in K.
\]
This map preserves projection points and sends the interval in~\eqref{eq_fiber} to the interval in~\eqref{eq_Kt}. Hence $K_t=\mathcal T_t(K)$ and
\[
h_{K_t}(v)=\max_{x\in K}\{v\cdot x+t\beta(v,x)\}.
\]
Choose a maximizer $x_t(v)$. We first show that $x_t(v)$ converges to $x(v)$ uniformly in $v$. Maximality gives
\[
v\cdot x_t(v)+t\beta(v,x_t(v))
\geq v\cdot x(v)+t\beta(v,x(v)).
\]
If uniform convergence failed, there would be $\eta>0$, $t_j\downarrow0$, and $v_j\in\Sn$ such that $|x_{t_j}(v_j)-x(v_j)|\geq\eta$.
Passing to a subsequence, assume $v_j\to v$ and $x_{t_j}(v_j)\to x_*$.
Since $\beta$ is bounded, passage to the limit gives $v\cdot x_*\geq v\cdot x(v)$.
Uniqueness implies $x_*=x(v)$, while $x(v_j)\to x(v)$, contradicting the distance bound.
Thus $x_t(v)-x(v)\to0$ uniformly in $v$.
Using $x(v)$ and $x_t(v)$ as competitors gives
\[
0\leq h_{K_t}(v)-h_K(v)-t\beta(v,x(v))
\leq t[\beta(v,x_t(v))-\beta(v,x(v))].
\]
The function $\beta$ is uniformly continuous on the compact set $\Sn\times K$. The right-hand side is therefore $o(t)$ uniformly in $v$, proving~\eqref{eq_support_o}.

The functional $\Phi_q$ is increasing under inclusion.
Indeed, $H\subseteq H'$ implies $X_H\leq X_{H'}$ in~\eqref{eq_Sq}.
For negative $q$, both the exponent $1/q$ in the radial function and the exponent $q/n$ in the volume functional reverse the inequality, so the final functional is still increasing.
For each $\varepsilon>0$, expansion~\eqref{eq_support_o} gives
\[
[h_K+t(g-\varepsilon)]\subseteq K_t\subseteq[h_K+t(g+\varepsilon)]
\]
for sufficiently small $t>0$. Denote the continuous linear first variation functional by
\[
\mathcal D_\Phi(f)=2\Phi_q(K)^{(q-n)/q}\int_{\Sn}f\,d\Fq(K).
\]
Applying~\eqref{eq_variation_Phi} to the two bounds and using monotonicity, we obtain
\begin{align*}
\mathcal D_\Phi(g-\varepsilon)
&\leq\liminf_{t\downarrow0}\frac{\Phi_q(K_t)-\Phi_q(K)}t\\
&\leq\limsup_{t\downarrow0}\frac{\Phi_q(K_t)-\Phi_q(K)}t
\leq\mathcal D_\Phi(g+\varepsilon).
\end{align*}
The difference between the two bounds is
\[
4\varepsilon\Phi_q(K)^{(q-n)/q}\Fq(K,\Sn),
\]
which tends to zero as $\varepsilon\downarrow0$ by the finiteness in Theorem~\ref{thm_fractional_integral}. The right derivative therefore exists and satisfies
\begin{equation}\label{eq_Phi_right}
\left.\frac d{dt}\right|_{t=0+}\Phi_q(K_t)
=2\Phi_q(K)^{(q-n)/q}\int_{\Sn}g\,d\Fq(K).
\end{equation}
Applying the same sandwich argument to volume, with $\mathcal D_V(f)=\int f\,d\surf(K)$ in place of $\mathcal D_\Phi$, and using $V(K_t)=V(K)$, gives $\int g\,d\surf(K)=0$. Equation~\eqref{eq_stationary} then makes the right derivative in~\eqref{eq_Phi_right} vanish, proving~\eqref{eq_Phi_o}.

\end{proof}

\begin{remark}\label{rem_stationary_framework}
The use of stationarity and continuous symmetrization to derive geometric constraints follows~\cite[Section~9.1]{MSY}. Here the uniform expansion~\eqref{eq_support_o} and monotone sandwich argument transfer the two-sided Wulff variation formula of Xi--Zhao~\cite[Theorem~1.1]{XiZhao} to the right derivative along the actual path. We take $\Omega=\Sn$; the perturbations $g\pm\varepsilon$ are continuous, the remainder is zero, and sufficiently small perturbations still define convex bodies.
\end{remark}

\subsection{Interval overlap estimates}

Overlap integrals can be computed line by line. Since sections of convex bodies are intervals, we first consider the relative positions of three intervals. For $\alpha,\beta,\gamma>0$, define
\begin{equation}\label{eq_Psi}
\Psi_{\alpha,\beta,\gamma}(d)
=\int_{-\alpha}^{\alpha}\int_{-\beta}^{\beta}
\ind_{[-\gamma,\gamma]}(v-w-d)\,dv\,dw.
\end{equation}

\begin{lemma}\label{lem_overlap}
Let $\alpha,\beta,\gamma>0$. The function $\Psi_{\alpha,\beta,\gamma}$ is even and nonincreasing as a function of $|d|$. If
\begin{equation}\label{eq_overlap_conditions}
|\alpha-\beta|<\gamma-|d|,
\qquad \gamma+|d|<\alpha+\beta,
\end{equation}
then
\begin{equation}\label{eq_Psi_quadratic}
\Psi_{\alpha,\beta,\gamma}(d)
=2\gamma(\alpha+\beta)-|\alpha-\beta|^2-\gamma^2-d^2.
\end{equation}
Consequently, for $0\leq t\leq1$,
\begin{equation}\label{eq_strict_overlap}
\Psi_{\alpha,\beta,\gamma}((1-t)d)
-\Psi_{\alpha,\beta,\gamma}(d)
=(2t-t^2)d^2\geq td^2.
\end{equation}
\end{lemma}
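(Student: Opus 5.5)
The plan is to reduce $\Psi_{\alpha,\beta,\gamma}$ to a one-dimensional convolution and read off all three claims from the explicit shape of that convolution.

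\textbf{Reduction.} In~\eqref{eq_Psi}, substitute $u=v-w$ for fixed $w$ and apply Fubini. This gives
\[
\Psi_{\alpha,\beta,\gamma}(d)=\int_{d-\gamma}^{d+\gamma}\varphi(u)\,du,
\qquad
\varphi=\ind_{[-\alpha,\alpha]}*\ind_{[-\beta,\beta]} .
\]
The function $\varphi$ is the trapezoid
\[
\varphi(u)=\min\{2\alpha,\,2\beta,\,(\alpha+\beta-|u|)_+\}.
\]
It is even and nonincreasing in $|u|$, equals the constant $2\min\{\alpha,\beta\}$ on $[-|\alpha-\beta|,|\alpha-\beta|]$, is affine with slope $\mp1$ on the two flanks, and has total integral $4\alpha\beta$.

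\textbf{Monotonicity.} Evenness of $\Psi$ follows from $\varphi(-u)=\varphi(u)$. For the monotonicity in $|d|$, take $d\geq0$ and differentiate:
\[
\Psi'(d)=\varphi(d+\gamma)-\varphi(d-\gamma).
\]
Since $|d+\gamma|\geq|d-\gamma|$ and $\varphi$ is nonincreasing in $|u|$, this derivative is $\leq0$. Equivalently, one can invoke the general fact that convolutions of symmetric unimodal functions are symmetric unimodal.

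\textbf{Quadratic formula.} The conditions~\eqref{eq_overlap_conditions} say two things.
\begin{itemize}
\item The window $[d-\gamma,d+\gamma]$ contains the plateau, because $|d|+|\alpha-\beta|<\gamma$.
\item Both endpoints $d\pm\gamma$ lie strictly inside $(-(\alpha+\beta),\alpha+\beta)$, because $\gamma+|d|<\alpha+\beta$.
\end{itemize}
Hence each endpoint lies on a linear flank, and $\Psi(d)$ equals the total mass $4\alpha\beta$ minus two triangular tails. These tails are $\tfrac12(\alpha+\beta-\gamma-d)^2$ and $\tfrac12(\alpha+\beta-\gamma+d)^2$, and their sum is $(\alpha+\beta-\gamma)^2+d^2$. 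Expanding,
\[
4\alpha\beta-(\alpha+\beta-\gamma)^2-d^2
\]
equals the right-hand side of~\eqref{eq_Psi_quadratic}, using $4\alpha\beta-(\alpha+\beta)^2=-(\alpha-\beta)^2$.

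\textbf{Strict overlap estimate.} For~\eqref{eq_strict_overlap}, note that $|(1-t)d|\leq|d|$. So the conditions~\eqref{eq_overlap_conditions} persist with $(1-t)d$ in place of $d$, and the quadratic formula applies at both points. The difference is then
\[
d^2-(1-t)^2d^2=(2t-t^2)d^2,
\]
and this is at least $td^2$ because $t\leq1$.

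The only delicate point is checking that the window endpoints land on the flanks rather than on the plateau or outside the support. That is exactly what the two inequalities in~\eqref{eq_overlap_conditions} encode, so this step should be straightforward once the trapezoid picture is set up.
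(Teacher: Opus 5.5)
Your proposal is correct and follows essentially the same route as the paper: you write $\Psi_{\alpha,\beta,\gamma}(d)$ as the integral of the trapezoidal convolution $\ind_{[-\alpha,\alpha]}*\ind_{[-\beta,\beta]}$ over the window $[d-\gamma,d+\gamma]$, get evenness and monotonicity from $\Psi'(d)=\varphi(d+\gamma)-\varphi(d-\gamma)\le0$, and obtain the quadratic formula as the total mass $4\alpha\beta$ minus two triangular tails. Your check that both window endpoints lie on the flanks, and that the conditions persist under $d\mapsto(1-t)d$, matches the paper's argument exactly.
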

\begin{proof}
The convolution $f=\ind_{[-\alpha,\alpha]}*\ind_{[-\beta,\beta]}$ is even and satisfies
\[
f(s)=
\begin{cases}
2\min\{\alpha,\beta\},&|s|\leq|\alpha-\beta|,\\
\alpha+\beta-|s|,&|\alpha-\beta|<|s|<\alpha+\beta,\\
0,&|s|\geq\alpha+\beta.
\end{cases}
\]
By~\eqref{eq_Psi}, $\Psi(d)=\int_{d-\gamma}^{d+\gamma}f(s)\,ds$, so $\Psi$ is even.
Since $f$ is nonincreasing in $|s|$, we have $\Psi'(d)=f(d+\gamma)-f(d-\gamma)\leq0$ for $d\geq0$.
Conditions~\eqref{eq_overlap_conditions} place the two endpoints of the integration interval in the sloping parts of $f$.
As $\int_{\mathbb R}f=4\alpha\beta$, subtracting the two triangular tails gives
\[
\Psi(d)=4\alpha\beta
-\frac{(\alpha+\beta-\gamma+d)^2+(\alpha+\beta-\gamma-d)^2}{2}.
\]
Expansion gives~\eqref{eq_Psi_quadratic}. Conditions~\eqref{eq_overlap_conditions} remain valid as $|d|$ decreases. Substitution of $d$ and $(1-t)d$ and subtraction therefore give~\eqref{eq_strict_overlap}.
\end{proof}

\begin{remark}\label{rem_box_slice}
This interval integral is equivalent to the three-dimensional box section calculation in~\cite[Lemma~7.2]{MSY}. If $R=[-\alpha,\alpha]\times[-\beta,\beta]\times[-\gamma,\gamma]$, then
\[
\Haus^2(R\cap\{v-w-z=d\})=\sqrt3\,\Psi_{\alpha,\beta,\gamma}(d).
\]
We use the explicit quadratic estimate in the nondegenerate range~\eqref{eq_overlap_conditions}.
\end{remark}

\subsection{Continuous symmetrization of the auxiliary set}

To apply the interval estimate to $T_r$, we symmetrize $K$ and $L$ in the same direction simultaneously.
If $L$ is convex, each section over the interior of its projection is a single interval, so construction~\eqref{eq_Kt} applies directly and $L_t$ remains convex.
Sections of a general Lipschitz star body may consist of several intervals, which requires a rule for mergers.
Both cases are covered by the following facts, valid for origin-symmetric Lipschitz star bodies and taken from MSY~\cite[Section~4.2 and Sections~4--5]{MSY}.

\begin{enumerate}
\item[(F1)] \emph{Sections.} For a set of directions of full spherical measure there is a \emph{multi-graphical} representation: pairwise disjoint open sets $\Omega_k$ in the projection space cover the projection up to a null set, and over $\Omega_k$ every line section consists of exactly $k$ intervals with continuous endpoint functions $f_1<g_1<\cdots<f_k<g_k$~\cite[Definition~4.2 and Theorem~5.7]{MSY}; see also Lin--Xi~\cite{LinXi}.
\item[(F2)] \emph{Flow.} The continuous Steiner symmetral $L_t=S_e^tL$ is defined sectionwise. Before the first contact, the interval centres are multiplied by $1-t$. At contact, touching intervals are merged; the merged family then evolves recursively with the rescaled time parameter in~\eqref{eq_interval_flow}. The body $L_t$ is the closure of the resulting sections~\cite[Definition~4.7]{MSY}. Taking the closure does not change the sections covered by (F1)~\cite[Corollary~4.10]{MSY}; the deformation preserves volume~\cite[Corollary~4.12]{MSY} and preserves the class of Lipschitz star bodies~\cite[Proposition~5.8]{MSY}. It commutes with reflection in the origin and with positive dilations.
\item[(F3)] \emph{Convexity.} If $L$ is convex, then (F2) reduces to the chord contraction~\eqref{eq_Kt} already used for $K$, and this holds in every direction~\cite[Remark~5.10]{MSY}.
\end{enumerate}

The sectionwise description in (F2) is written out in Appendix~\ref{app_flow}, together with the elementary details used below.

\begin{lemma}\label{lem_overlap_monotonicity}
Let $K$ be a convex body, $L$ an origin-symmetric Lipschitz star body, and $e$ a direction in which $L$ is multi-graphical. Along the paths $K_t$ and $L_t$ above, for every $r>0$,
\begin{equation}\label{eq_T_monotonicity}
T_r(K_t,L_t)\geq T_r(K,L).
\end{equation}
If $L$ is convex, this holds in every direction.
\end{lemma}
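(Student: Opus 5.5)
We prove \eqref{eq_T_monotonicity} by reducing it to a one-dimensional statement on lines parallel to $e$ and then invoking Lemma~\ref{lem_overlap}. Write $x=a+ve$, $y=b+we$ with $a,b\in e^\perp$. Then
\[
T_r(K,L)=\int_{e^\perp}\int_{e^\perp}\int\!\!\int \ind_{K}(a+ve)\ind_K(b+we)\ind_{rL}\bigl(a-b+(v-w)e\bigr)\,dv\,dw\,da\,db .
\]
For fixed $a,b$, the inner double integral involves only three one-dimensional sections. These are the chord of $K$ over $a$, with centre $m(a)$ and half-length $A(a)$; the chord of $K$ over $b$, with centre $m(b)$ and half-length $A(b)$; and the section of $rL$ over $a-b$, which consists of finitely many intervals by (F1), valid for a.e.\ $a-b$. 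The flow $K_t$ replaces $m$ by $(1-t)m$ and keeps the half-lengths, so it suffices to show that the inner integral does not decrease when the three configurations are moved by the flows. Since $L$ is origin-symmetric and the flow commutes with dilations, $(rL)_t=rL_t$, and the same flow applies to $rL$.

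\textbf{Single-interval case.} First suppose the section of $rL$ over $c=a-b$ is one interval with centre $\mu$ and half-length $\gamma$. After translating the variables, the inner integral equals $\Psi_{A(a),A(b),\gamma}(d)$ with $d=m(a)-m(b)-\mu$. Under the flow every centre is multiplied by $1-t$, so $d$ becomes $(1-t)d$. Lemma~\ref{lem_overlap} then gives monotonicity, because $\Psi$ is even and nonincreasing in $|d|$. This settles the case of convex $L$ in every direction, by (F3).

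\textbf{Multi-interval case.} For a general multi-graphical $L$, the section of $rL$ over $c$ is a union of disjoint intervals $I_1,\dots,I_k$. Before the first contact, the inner integral is the sum of the terms $\Psi_{A(a),A(b),\gamma_j}(m(a)-m(b)-\mu_j)$. All centres scale by $1-t$, so each displacement $d_j$ scales by $1-t$ and each term is nondecreasing. At a contact time two touching intervals merge into one interval. The indicator of the union is unchanged at that moment, so the integral is continuous there. After the merger the recursive rescaled flow of (F2), written out in Appendix~\ref{app_flow}, again contracts all centres affinely toward $0$ with the new time parameter, and the same argument applies on each subsequent stage. Since there are at most $k$ merger times, monotonicity follows on all of $[0,1]$ by induction on the number of stages.

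\textbf{Integration.} Integrating the pointwise inequality over $(a,b)\in(e^\perp)^2$ yields \eqref{eq_T_monotonicity}. Here the null sets excluded in (F1) are harmless, since by \cite[Corollary~4.10]{MSY} the closure in (F2) does not alter the covered sections, and boundary points of $Q$ form a null set.

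\textbf{Main obstacle.} The delicate step is the merger bookkeeping. We must check that the sectionwise flow of $L_t$ over $c=a-b$ is exactly the rescaled centre contraction between mergers, with a well-defined common time parametrization; otherwise the displacement $d_j$ might not scale by $1-t$. We must also check that the finitely many event times depend measurably on $c$, so that Tonelli's theorem applies. We would rely on the explicit description in Appendix~\ref{app_flow} for both points.
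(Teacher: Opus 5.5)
Your proposal is correct and follows the paper's proof essentially step by step. The common steps are:
\begin{itemize}
\item the sectionwise reduction to terms $\Psi_{A(a),A(b),rB_i(c)}\bigl((1-t)[m(a)-m(b)-r\ell_i(c)]\bigr)$ with $c=(a-b)/r$;
\item monotonicity of $\Psi$ in $|d|$ before the first contact;
\item continuity at mergers, with the rescaling $d(t)=\frac{1-t}{1-\tau}d(\tau)$ on later stages;
\item Fubini's theorem over the pairs $(a,b)$ off the exceptional null set $Z$ of (F1).
\end{itemize}
For the last step you should state explicitly that $\{(a,b):(a-b)/r\in Z\}$ is null, which also follows from Fubini, rather than only calling the excluded sets harmless.

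The measurability of event times that you list as an obstacle does not arise. The slice contribution is the partial integral of the Borel function $\ind_{K_t}(a+ve)\ind_{K_t}(b+we)\ind_{rL_t}(a-b+(v-w)e)$. It is therefore measurable in $(a,b)$ by Tonelli's theorem, with no need to track the merger times.
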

\begin{proof}[Proof sketch]
Fix $r>0$ and a pair of projection points $(a,b)$ outside the exceptional null set of (F1), and put $c=(a-b)/r$.
If $c$ lies outside the projection of $L$ the contribution vanishes throughout the path; otherwise the section of $L$ over $c$ is a finite union of intervals with initial centres $\ell_i(c)$ and half-lengths $B_i(c)$. Before the first contact, its contribution to $T_r$ is
\[
\Gamma_t(a,b,r)=\sum_i\Psi_{A(a),A(b),rB_i(c)}
\bigl((1-t)[m(a)-m(b)-r\ell_i(c)]\bigr).
\]
The interval centres and the chord midpoints are contracted by the same factor, so the argument of each $\Psi$ moves monotonically towards the origin; since $\Psi$ is nonincreasing in the modulus of its argument, each term is nondecreasing in $t$.
Touching intervals are then merged; the same comparison applies after every merger, and each merger strictly reduces the number of intervals, so finitely many stages cover $[0,1]$.
Fubini's theorem over all pairs gives~\eqref{eq_slice_gain} and hence the statement.
Appendix~\ref{app_flow} contains the details, including the exceptional set and the merge times.
\end{proof}

\subsection{Affineness of chord midpoint functions}

The preceding comparison only shows that the overlap integral does not decrease. We now prove that an increase of order $o(t)$ in the weighted integral forces the chord midpoint function to be affine: any nonzero local midpoint defect gives a lower bound $c_0t$, contradicting this estimate.

\begin{lemma}\label{lem_local_gain}
Let $K$ be strictly convex, let $L=-L$ be a Lipschitz star body, and let $e$ satisfy the assumptions of Lemma~\ref{lem_overlap_monotonicity}. If
\begin{equation}\label{eq_integrated_o}
\int_0^\infty r^{q-n-1}[T_r(K_t,L_t)-T_r(K,L)]\,dr=o(t)
\end{equation}
holds for some fixed $q\ne n$, then the chord midpoint function $m$ of $K$ in direction $e$ is affine on $Q$.
\end{lemma}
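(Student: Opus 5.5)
We argue by contradiction: assume $m$ is not affine on $Q$ and produce a lower bound $c_0t$ for the left-hand side of~\eqref{eq_integrated_o}. By Lemma~\ref{lem_overlap_monotonicity} the integrand $T_r(K_t,L_t)-T_r(K,L)$ is nonnegative for every $r>0$, and the weight $r^{q-n-1}$ is positive. So it suffices to find a set of radii $r\in[r_1,r_2]\subset(0,\infty)$ of positive length and a constant $c>0$ with $T_r(K_t,L_t)-T_r(K,L)\geq ct$ for all small $t$ and all such $r$. For fixed $r$, we write $T_r$ by Fubini as an integral over pairs $(a,b)\in Q\times Q$ of the slice contributions $\Gamma_t(a,b,r)$ from the proof of Lemma~\ref{lem_overlap_monotonicity}. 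Each slice contribution is nondecreasing in $t$, so it is enough to gain $ct$ on a set of pairs of positive measure.

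\textbf{Locating the defect.} Since $m$ is continuous on $Q$ (Lemma~\ref{lem_support_expansion}) and not affine on $D$, there are $a_0,b_0\in D$ with $m$ not affine on the segment through them. Equivalently, the midpoint defect $\delta(a,b)=m(a)+m(b)-2m(\frac{a+b}2)$, or a suitable three-point variant, is nonzero at some pair. Here symmetry of $L$ enters. The overlap term for the pair $(a,b)$ sees $m(a)-m(b)-r\ell_i(c)$ with $c=(a-b)/r$. Because $L=-L$, the centre function $\ell$ of a convex section is odd. For a single-interval section, $\ell_1(c)=\ell(c)$ is affine in $c$ only if its graph is flat. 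Rather than relying on $\ell$, I would exploit that the gain~\eqref{eq_strict_overlap} equals $(2t-t^2)d^2$ with $d=m(a)-m(b)-r\ell_i(c)$. It therefore suffices to show that $d$ is not identically zero on any open set of pairs, for radii in an interval where the nondegeneracy conditions~\eqref{eq_overlap_conditions} hold. Suppose $d\equiv0$ for a.e.\ pairs near $(a_0,b_0)$ and all $r$ near some $r_0$. Then $m(a)-m(b)=r\ell(\frac{a-b}r)$ locally. Fixing $a-b$ and varying $a$ shows that $m(a)-m(a-h)$ depends only on $h$, which is Cauchy's equation and forces $m$ affine near $a_0$. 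A connectedness argument then gives affine on $D$, and by continuity on $Q$; this is the contradiction. Hence $d^2\geq\eta>0$ on an open set $U$ of pairs, for all $r$ in an interval.

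\textbf{Verifying~\eqref{eq_overlap_conditions} and concluding.} On $U$ we need the nondegeneracy conditions $|A(a)-A(b)|<rB_1(c)-|d|$ and $rB_1(c)+|d|<A(a)+A(b)$, uniformly for small $t$. Choose $a,b$ close to each other in the interior, so that $A(a)\approx A(b)>0$, and choose $r$ so that $rB_1(c)$ lies strictly between $|A(a)-A(b)|+|d|$ and $A(a)+A(b)-|d|$. Here $c$ ranges over a compact subset of the interior of the projection of $L$, where $B_1>0$ (for nonconvex $L$ we use one interval component on a multi-graphical piece $\Omega_k$ given by (F1), before its first contact time). This choice is possible after shrinking $U$, since $|d|$ is small when $a\approx b$ and $r$ is moderate; the defect must then be taken at the scale $|a-b|$. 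I expect this compatibility to be the main obstacle: making $d\neq0$ while keeping $|d|$ below the slack in~\eqref{eq_overlap_conditions}. I would handle it by fixing first a small scale $h=a-b$, then $r$ with $rB_1(h/r)\approx A$, and then using the Cauchy-equation argument at this fixed $h$.

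With these choices, \eqref{eq_strict_overlap} gives a slice gain of at least $t\eta$ on $U\times[r_1,r_2]$. Monotonicity of the other terms and of the later stages handles the rest. Integrating in $r$ with the weight $r^{q-n-1}$ gives a lower bound $c_0t$, which contradicts~\eqref{eq_integrated_o}.
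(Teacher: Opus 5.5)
Your ingredients are the right ones: nonnegativity of every slice contribution, the gain $(2t-t^2)d^2$ from~\eqref{eq_strict_overlap}, and a Cauchy equation when the defect vanishes. But the argument is not organized so that it closes, and it breaks exactly at the ``compatibility obstacle'' you leave open.

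\begin{itemize}
\item You extract a defect from global non-affineness at some pair $(a_0,b_0)$. Such a pair need not lie near the diagonal. Outside the range~\eqref{eq_overlap_conditions}, Lemma~\ref{lem_overlap} gives only monotonicity, not a gain: for example, $\Psi_{\alpha,\beta,\gamma}$ is constant in $d$ as long as $[d-\gamma,d+\gamma]\supseteq[-(\alpha+\beta),\alpha+\beta]$.
\item Conversely, $d\equiv0$ near one triple only makes $m$ affine near $a_0$ (and $b_0$). The connectedness argument needs local affineness at every point of $D$, so ``affine on $D$'' does not follow.
\item Your proposed repair, a Cauchy argument ``at this fixed $h$'', would not suffice either. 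Knowing that $m(a)-m(a-h)$ is constant in $a$ for one vector $h$ does not force affineness; $a-b$ must range over an open set.
\end{itemize}

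The missing idea is to base the argument at a diagonal triple and run a dichotomy at every point. Fix an arbitrary $a_0\in D$, and put $h=A(a_0)$ and $r_0=h/B(0)$. At $(a_0,a_0,r_0)$ the three half-lengths all equal $h$, and $d=-r_0\ell(0)=0$ because $L=-L$. By continuity,~\eqref{eq_overlap_conditions} then holds with room to spare on a whole neighbourhood $U\times U\times I$.

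There is then no tension between $d\neq0$ and $|d|$ small, because one of two things happens.
\begin{itemize}
\item Either $d$ is nonzero somewhere in this neighbourhood. Then $|d|\geq d_0>0$ on an open subset, the gain there is at least $td_0^2$, and~\eqref{eq_integrated_o} fails.
\item Or $d\equiv0$ on it. Taking $r=r_0$ gives $m(a)-m(b)=r_0\ell((a-b)/r_0)$ for $a,b\in U$. So $m$ satisfies the midpoint Jensen equation and is affine on $U$.
\end{itemize}
Since $a_0$ is arbitrary, $m$ is locally affine on the connected set $D$, hence affine.

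For nonconvex $L$ this also requires that sections over $|c|<\delta$ be single intervals with continuous centre and half-length, so that $\ell(c)\to\ell(0)=0$. Your fallback to one component over some $\Omega_k$ does not give this, since the point $c=0$ may lie in the exceptional set of (F1). The paper obtains it from \cite[Proposition~5.6]{MSY}.
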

\begin{proof}
By~\cite[Proposition~5.6]{MSY}, for the direction $e$ fixed above there is $\delta>0$ such that every line section of $L$ over $|c|<\delta$ is a single interval whose endpoints depend continuously on the projection point~$c$.
We write these sections as $[\ell(c)-B(c),\ell(c)+B(c)]$, where $B(c)>0$.
Origin symmetry gives $\ell(0)=0$ and $B(0)>0$.

Fix any $a_0\in D$, put $h=A(a_0)>0$, and choose $r_0=h/B(0)$ so that the three relevant intervals have equal half-lengths at this point. For projection points and radii near $(a_0,a_0,r_0)$, define the midpoint defect
\begin{equation}\label{eq_Delta}
\Delta(a,b,r)=m(a)-m(b)-r\ell((a-b)/r).
\end{equation}
At $(a_0,a_0,r_0)$, the three half-lengths $A(a),A(b),rB((a-b)/r)$ equal $h$, and the defect in~\eqref{eq_Delta} vanishes.
By continuity, there are an open ball $U\Subset D$ centered at $a_0$ and an interval $I\Subset(0,\infty)$ containing $r_0$ such that the three half-lengths lie in $(7h/8,9h/8)$ and $|\Delta|<h/4$ on $U\times U\times I$.
Shrink these neighborhoods so that $|(a-b)/r|<\delta$.
Set $\alpha=A(a)$, $\beta=A(b)$, $\gamma=rB((a-b)/r)$, and $d=\Delta(a,b,r)$.
Then
\[
|\alpha-\beta|<\frac h4<\frac{5h}8<\gamma-|d|,
\qquad
\gamma+|d|<\frac{11h}8<\frac{14h}8<\alpha+\beta.
\]
Hence~\eqref{eq_overlap_conditions} holds throughout this neighborhood.

If $\Delta$ is nonzero at some point, continuity gives a nonempty open set $O\Subset U\times U\times I$ and $d_0>0$ such that $|\Delta|\geq d_0$ on $O$.
Neither $O$ nor $d_0$ depends on $t$.
For each fixed $r$, the exceptional pairs in Lemma~\ref{lem_overlap_monotonicity} form a null set.
A second application of Fubini's theorem shows that the exceptional triples are null in $D\times D\times I$.
Removing them from $O$ gives a measurable set $O_*$ with $|O_*|=|O|>0$.

For triples in $O_*$, the entire line section of $L$ is a single interval, so~\eqref{eq_interval_flow} gives its center as $(1-t)\ell(c)$. By~\eqref{eq_strict_overlap}, $\Gamma_t-\Gamma_0\geq td_0^2$. All other contributions, outside the exceptional null set, are nonnegative by the pointwise argument that leads to~\eqref{eq_slice_gain}. Restricting the full integral first to $I$ and then to $O_*$ gives
\[
\begin{aligned}
&\int_0^\infty r^{q-n-1}[T_r(K_t,L_t)-T_r(K,L)]\,dr\\
&\quad\geq t d_0^2\int_{O_*}r^{q-n-1}\,da\,db\,dr=:c_0t.
\end{aligned}
\]
Since $I$ is bounded away from $0$ and $\infty$, the weight has a positive lower bound on $I$, so $c_0>0$. This contradicts~\eqref{eq_integrated_o}.

Thus $\Delta=0$ on $U\times U\times I$. Taking $r=r_0$ gives
\begin{equation}\label{eq_local_Cauchy}
m(a)-m(b)=r_0\ell((a-b)/r_0)\qquad(a,b\in U).
\end{equation}
This difference depends only on $a-b$.
For $x,y\in U$ and $z=(x+y)/2$, identity~\eqref{eq_local_Cauchy} gives $m(x)-m(z)=m(z)-m(y)$.
The continuous function $m$ therefore satisfies the midpoint Jensen equation on $U$ and is affine there.
Since $a_0$ was arbitrary, $m$ is locally affine on the connected open convex set $D$.
The affine coefficients agree on overlapping neighborhoods, giving a single affine function on $D$.
Boundary continuity from Lemma~\ref{lem_support_expansion} extends this function to $Q$.
\end{proof}

\section{Ellipsoid rigidity}\label{sec_rigidity}

A convex body is strictly convex if its boundary contains no nontrivial line segment. In this section we prove Theorem~\ref{thm_strict}, so $K\subset\mathbb R^n$ is strictly convex and $q>-1$, $q\ne0,n$ are fixed throughout.

By Lemma~\ref{lem_local_gain}, an increase of order $o(t)$ in the weighted overlap integral implies that the chord midpoint function is affine. We first show that this geometric property in a dense set of directions characterizes ellipsoids, and then use the extremal representations of the auxiliary functionals to complete the proof.

\begin{lemma}\label{lem_affine_midpoints}
Let $K$ be a strictly convex body. If its chord midpoint function is affine in every direction belonging to a dense subset of $\Sn$, then $K$ is an ellipsoid.
\end{lemma}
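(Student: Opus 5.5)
The plan is to reduce to the classical Bertrand--Brunn characterization: a convex body is an ellipsoid if and only if, for every direction $e$, the midpoints of all chords parallel to $e$ lie in a hyperplane. First I will translate the hypothesis into this language. If $m$ is affine on $Q=K\mid e^\perp$ in direction $e$, then the midpoints $a+m(a)e$, $a\in Q$, lie on the graph of an affine function over $e^\perp$, which is a hyperplane. Here Lemma~\ref{lem_support_expansion} is used: it extends $m$ continuously to $Q$, so the boundary chords, which are points by strict convexity, also lie on this hyperplane. The remaining work is to pass from a dense set $E\subset\Sn$ of directions to all directions.

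For the density step, take an arbitrary $e\in\Sn$ and directions $e_j\in E$ with $e_j\to e$. Let $m_j(a)=\lambda_j\cdot a+\mu_j$ on $Q_j=K\mid e_j^\perp$, and encode the midpoint hyperplane as $H_j=\{x: w_j\cdot x=c_j\}$, normalized so that $|w_j|=1$. I must rule out $e_j\in H_j$ asymptotically, i.e.\ a degenerating slope. I will do this by noting that $H_j$ meets every chord in direction $e_j$ transversally at a single point, namely the midpoint. Hence $w_j\cdot e_j\neq0$.

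More quantitatively, $H_j$ passes through the centroid-type point $a+m_j(a)e_j$ for all $a$. Because $K$ is bounded, the values $m_j$ are bounded by the circumradius, which bounds $\lambda_j$ on a fixed ball inside all $Q_j$ for large $j$. This ball exists since $K$ has interior. Passing to a subsequence, $\lambda_j\to\lambda$ and $\mu_j\to\mu$ after identifying the hyperplanes $e_j^\perp$ with $e^\perp$ by rotations tending to the identity.

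Next I will show that chords converge. For $a\in D=\interior(K\mid e^\perp)$, the chord of $K$ through $a$ in direction $e$ is the Hausdorff limit of nearby chords in directions $e_j$, because the endpoint functions $f_\pm$ are continuous in both the point and the direction on the interior. Therefore their midpoints converge, and $m(a)=\lambda\cdot a+\mu$ on $D$, hence on $Q$ by the boundary continuity of Lemma~\ref{lem_support_expansion}. This proves that $m$ is affine in every direction, and Bertrand--Brunn gives that $K$ is an ellipsoid.

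The main obstacle is this continuity of chord midpoints under joint variation of the base point and the direction, near the relative boundary where chords shrink. I will handle it by working only on compact subsets of $D$, where chord lengths are bounded below, so the chord endpoints are intersections of a line with $\partial K$ that are transversal in the sense of being interior lines. Continuity then follows from convexity of $K$. Affineness on $D$ is enough and extends to $Q$, so the boundary behaviour is never needed. If one prefers to avoid citing Bertrand--Brunn, an alternative is to use the affine midpoint property in all directions to show that the body obtained by reflecting $K$ through its midpoint hyperplane, which is an affine reflection, equals $K$. The group generated by these affine reflections then preserves $K$, and one concludes via the John ellipsoid argument that it is conjugate to a subgroup of $O(n)$ containing all reflections, forcing $K$ to be an ellipsoid. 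I expect to cite Bertrand--Brunn directly.
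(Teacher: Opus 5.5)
Your argument is correct, but it is organized differently from the paper's.

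\textbf{Your route.} You pass from the dense set of directions to all directions at the level of midpoint functions. The affine coefficients $(\lambda_j,\mu_j)$ stay bounded by the inscribed-ball and circumradius estimate, and chords through interior points of $K$ converge in the Hausdorff sense. With affine midpoints in every direction, you then cite the Bertrand--Brunn theorem as a black box.

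\textbf{The paper's route.} The paper never leaves the dense set. For each good direction $e$ it takes the affine reflection $\mathcal R_e(a+se)=a+[2m_e(a)-s]e$, which preserves $K$ and fixes its centroid. Invariance of the covariance matrix, $A_e\Sigma A_e^{\mathsf T}=\Sigma$, makes $\Sigma^{-1/2}A_e\Sigma^{1/2}$ an orthogonal reflection with normal parallel to $\Sigma^{-1/2}e$. Density is then passed to the limit at the level of orthogonal reflections, which depend continuously on their normals, using that $K$ is closed. Invariance under reflection in every hyperplane through the centroid forces a ball.

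\textbf{Comparison.} The paper's proof is self-contained and in effect proves the needed case of Bertrand--Brunn directly; the density step costs almost nothing once the covariance normalization is in place. Yours does the limit passage in an elementary geometric form, but that step carries all of your own work, and the core rigidity is outsourced to a classical theorem. Your sketched alternative with affine reflections and the John ellipsoid is essentially the paper's proof. The differences are that the paper uses the covariance (Legendre) ellipsoid instead of the John ellipsoid, and applies it before any density argument rather than after yours.

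\textbf{Boundary extension.} Neither argument really needs the extension from Lemma~\ref{lem_support_expansion}. As you note, affineness on $D$ suffices, and the paper's reflection argument likewise only uses chords over $D$ together with closedness of $K$.
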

\begin{proof}
In each such direction, write $m_e(a)=b_e\cdot a+d_e$. The affine reflection
\[
\mathcal R_e(a+se)=a+[2m_e(a)-s]e
\]
preserves $K$. Its linear part has determinant of absolute value one and a one-dimensional $-1$ eigenspace parallel to $e$. Each $\mathcal R_e$ fixes the volume centroid
\[
p=\frac1{V(K)}\int_Kx\,dx.
\]
After translating the centroid $p$ to the origin, denote the linear part by $A_e$. The positive definite covariance matrix
\[
\Sigma=\frac1{V(K)}\int_K(x-p)(x-p)^{\mathsf T}\,dx
\]
satisfies $A_e\Sigma A_e^{\mathsf T}=\Sigma$. Hence $O_e=\Sigma^{-1/2}A_e\Sigma^{1/2}$ is an orthogonal reflection with normal parallel to $\Sigma^{-1/2}e$.

The body $\Sigma^{-1/2}(K-p)$ is invariant under these reflections.
An invertible linear map preserves density in the space of directions, so their normals form a dense set.
Continuity of orthogonal reflections in their normals and closedness of the body extend this invariance to reflection across every hyperplane through the origin.
Its radial function is therefore constant, and the body is a ball.
Undoing the affine transformation proves the lemma.
\end{proof}

\begin{remark}\label{rem_midpoint_geometry}
The characterization by affine chord midpoints is discussed in~\cite[Section~7.4, Theorem~7.8]{MSY} and originates with Soltan. The covariance normalization above gives a direct proof for a dense set of directions.
\end{remark}

\begin{proof}[Proof of Theorem~\ref{thm_strict}]
By Proposition~\ref{prop_stationary}, $K$ satisfies~\eqref{eq_stationary}. Lemma~\ref{lem_support_expansion} gives
\begin{equation}\label{eq_M_o}
M(K_t)-M(K)=o(t)
\end{equation}
in every direction.
Take $L=L_K$ as in~\eqref{eq_normalized_auxiliary}.
For positive parameters, $L$ is convex and any direction may be used.
For $-1<q<0$, Lemma~\ref{lem_negative_lipschitz} makes $L$ a Lipschitz star body; select any direction from the set of full measure in which it is multi-graphical.
Deform $K$ and $L$ simultaneously.
The auxiliary volume remains $1/n$.

For positive $q$, apply Lemma~\ref{lem_positive_energy}. When $0<q<n$, the maximization formula~\eqref{eq_positive_extreme}, together with~\eqref{eq_T_monotonicity} and~\eqref{eq_M_o}, gives
\begin{equation}\label{eq_squeeze_subcritical}
0\leq q(q+1)[E_q(K_t,L_t)-E_q(K,L)]
\leq M(K_t)-M(K)=o(t).
\end{equation}
For $q>n$, the minimization formula gives
\begin{equation}\label{eq_squeeze_supercritical}
0\leq q(q+1)[E_q(K,L)-E_q(K_t,L_t)]
\leq M(K)-M(K_t)=o(t).
\end{equation}
Substitute~\eqref{eq_layer_positive} into~\eqref{eq_squeeze_subcritical}, or~\eqref{eq_layer_supercritical} into~\eqref{eq_squeeze_supercritical}, and use preservation of the volume of $K_t$. Both comparisons give~\eqref{eq_integrated_o}.

For $-1<q<0$, the set $L_t$ is admissible in the measurable-set minimization~\eqref{eq_negative_extreme}. Therefore,
\begin{equation}\label{eq_squeeze_negative}
0\leq C_q[N_q(K,L)-N_q(K_t,L_t)]
\leq M(K)-M(K_t)=o(t).
\end{equation}
By Lemma~\ref{lem_negative_measurable}, both $N_q(K,L)$ and $N_q(K_t,L_t)$ are finite. Subtract the full bracketed expressions in~\eqref{eq_N_definition}. Since both volumes are preserved, we obtain
\begin{equation}\label{eq_N_difference}
N_q(K,L)-N_q(K_t,L_t)
=\int_0^\infty r^{q-n-1}[T_r(K_t,L_t)-T_r(K,L)]\,dr.
\end{equation}
First subtract on a truncated interval $(\varepsilon,R)$, then pass to the limit using finiteness of the two integrals and nonnegativity of the difference integrand. Equations~\eqref{eq_squeeze_negative}--\eqref{eq_N_difference} give~\eqref{eq_integrated_o}.

Lemma~\ref{lem_local_gain} now implies that the chord midpoint function is affine in every selected direction. A set of full spherical measure is dense, so the directions selected for negative parameters are also dense. Lemma~\ref{lem_affine_midpoints} completes the proof.
\end{proof}

Strict convexity is used in Lemma~\ref{lem_support_expansion}: it gives uniqueness of support points and singleton line sections over the projection boundary, which yield the uniform first-order expansion of the support function.

Ellipsoids satisfy the midpoint relation obtained in the proof. Center an ellipsoid $K$ at its centroid. By the $SL(n)$ equivariance and dilation rule for $S_q$~\cite[Definition~2.1 and the ensuing discussion]{XiZhao}, $L_K$ is homothetic to $K$. In a fixed direction, write $m(a)=\zeta\cdot a$. The midpoint function of the auxiliary body is then $\ell(c)=\zeta\cdot c$, so
\[
m(a)-m(b)-r\ell((a-b)/r)=0.
\]
The increase in~\eqref{eq_strict_overlap} consequently vanishes.

Finally, we apply rigidity to the boundary constancy question, Question~\ref{q_smooth}.

\begin{proof}[Proof of Corollary~\ref{cor_boundary_constant}]
Formula~\eqref{eq_F} makes the two measures proportional. Everywhere positive Gauss curvature implies strict convexity, so Theorem~\ref{thm_strict} applies. This answers Question~\ref{q_smooth} affirmatively in the nondegenerate parameter range.
\end{proof}

\begin{remark}\label{rem_rigidity_analogy}
Rigidity statements asserting that a proportionality condition on a natural functional forces the underlying body to be an ellipsoid have a classical analogue for projection functions~\cite{HowardHug}.
In the present setting the proportionality relates two measures carried by the same body, and the conclusion is that the body itself is an ellipsoid: first for smooth solutions, as asked in Question~\ref{q_smooth}, and then for all strictly convex ones (Theorem~\ref{thm_strict}).
\end{remark}

\section{Polytopal solutions and two-parameter stationarity}\label{sec_polytopes}

The proportionality equation for ellipsoids and simplices was noted by Xi--Zhao~\cite[discussion after Corollary~6.3]{XiZhao}.
We first prove that the property is preserved by invertible affine transformations, and then extend their observation for regular polytopes to facet-transitive convex polytopes; together with the explicit constructions below, this answers Question~\ref{q_polytopes} negatively.
For polytopes, both measures are atomic, and facet transitivity makes the facet areas equal and the corresponding fractional weights equal.

\subsection{Affine covariance}

\begin{proposition}\label{prop_affine}
Let $q>-1$, $q\ne0$, and suppose $\Fq(K,\cdot)=c\surf(K,\cdot)$. For $A\in GL(n)$, $b\in\mathbb R^n$, and $D=|\det A|$,
\begin{equation}\label{eq_affine_proportionality}
\Fq(AK+b,\cdot)=D^{n/q}c\surf(AK+b,\cdot).
\end{equation}
In particular, $SL(n)$ transformations preserve the proportionality constant.
\end{proposition}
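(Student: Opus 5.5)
The plan is to avoid computing $\Fq(AK+b,\cdot)$ directly. Instead I would pass through the variational characterization in Proposition~\ref{prop_stationary}, using the affine invariance of $\mathcal A_q$. Translations are harmless: all the objects are translation invariant (see the remark after Theorem~\ref{thm_fractional_integral}). So we may take $b=0$. The case $q=n$ is immediate from Proposition~\ref{prop_degenerate_index}. There $c=(n+1)V(K)$, and $(n+1)V(AK)=D\,c=D^{n/q}c$.

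\textbf{Step 1 (scaling of $\Phi_q$).} Write $A=\lambda A_0$ with $A_0\in SL(n)$ and $\lambda=D^{1/n}$; a reflection may be absorbed into $A_0$, since $|\det|$ is all that matters. By the $SL(n)$ equivariance of $S_q$ cited from Xi--Zhao, $S_q(A_0K)=A_0S_qK$.

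For dilations, \eqref{eq_Sq} gives the scaling directly. Chord lengths scale by $\lambda$ and $dy$ on $u^\perp$ by $\lambda^{n-1}$, so $\rho_{S_q(\lambda K)}^q=\lambda^{n+q}\rho_{S_qK}^q$. Hence
\[
\Phi_q(AK)^{n/q}=V(S_q(AK))=D^{(n+q)/q}\Phi_q(K)^{n/q}.
\]
In particular $\mathcal A_q(AK)=\mathcal A_q(K)$ for every $K$.

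\textbf{Step 2 (transfer of Wulff stationarity).} For $q\ne n$, Proposition~\ref{prop_stationary} shows that proportionality with any constant is equivalent to Wulff stationarity of $K$ for $\mathcal A_q$. I would show that $AK$ is also Wulff stationary. Given $f\in C(\Sn)$, define
\[
\tilde f(v)=|A^{\mathsf T}v|\,f\bigl(A^{\mathsf T}v/|A^{\mathsf T}v|\bigr)\cdot(\text{suitably inverted}).
\]
More precisely, I would use the half-space identity
\[
A\{x:x\cdot v\le h_K(v)+t\tilde f(v)\}=\{y:y\cdot w\le h_{AK}(w)+tf(w)\},
\qquad w=A^{-\mathsf T}v/|A^{-\mathsf T}v|,
\]
with $\tilde f(v)=|A^{-\mathsf T}v|^{-1}f(w)$. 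Intersecting over all $v$ then gives $[h_{AK}+tf]=A[h_K+t\tilde f]$, and $\tilde f$ is continuous. By Step 1,
\[
\mathcal A_q([h_{AK}+tf])=\mathcal A_q([h_K+t\tilde f]),
\]
whose derivative at $t=0$ vanishes. So $AK$ is stationary and $\Fq(AK,\cdot)=c_q(AK)\surf(AK,\cdot)$.

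\textbf{Step 3 (the constant).} By Proposition~\ref{prop_stationary}, $c=c_q(K)$. Then Step 1 and $V(AK)=DV(K)$ give
\[
c_q(AK)=\frac{n+q}{2nDV(K)}D^{(n+q)/q}\Phi_q(K)^{n/q}=D^{n/q}c,
\]
which is \eqref{eq_affine_proportionality}.

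The main point needing care is the Wulff-shape identity in Step 2: checking the bijection $v\mapsto w$ on $\Sn$ and the homogeneity bookkeeping $h_{AK}(w)=h_K(A^{\mathsf T}w)$ with the normalization factor. The rest is routine scaling.
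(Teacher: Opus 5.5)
Your argument is correct, but it takes a different route from the paper.

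\textbf{How the paper argues.} The paper works at the level of measures. For $T\in SL(n)$ it invokes the weighted transformation identity of Xi--Zhao (their Lemma~3.14),
\[
\int g\,d\Fq(TK)=\int |T^{-\mathsf T}u|\,g\bigl(T^{-\mathsf T}u/|T^{-\mathsf T}u|\bigr)\,d\Fq(K,u),
\]
and notes that $\surf$ obeys the same weighted law. It then treats dilations by the explicit scaling $\Fq(\lambda K)=\lambda^{n-1+n^2/q}\Fq(K)$, computed from $\Vt_q$.

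\textbf{How you argue.} You use only three ingredients: the scalar invariance $\mathcal A_q(AK)=\mathcal A_q(K)$, the naturality $[h_{AK}+tf]=A[h_K+t\tilde f]$ of Wulff shapes, and both directions of Proposition~\ref{prop_stationary}. The factor $D^{n/q}$ then comes out of $c_q(AK)$ rather than from a computation with $\Vt_q$.

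\textbf{What each route buys.} Your route avoids the external measure-transformation lemma and the dilation computation for $\Vt_q$. It also explains conceptually why the solution set is affinely invariant: it is the stationary set of an affinely invariant functional. The price is threefold. You rely on the first variation formula at every convex body. The case $q=n$ must be handled separately, which you do correctly. And you obtain only the proportionality statement, not the transformation law for $\Fq$ itself that the paper's identity provides.

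\textbf{A bookkeeping slip.} The correct auxiliary function is $\tilde f(v)=|A^{-\mathsf T}v|\,f(w)$, not $|A^{-\mathsf T}v|^{-1}f(w)$. This follows from $h_{AK}(w)=h_K(A^{\mathsf T}w)=h_K(v)/|A^{-\mathsf T}v|$. You can check it with $A=\lambda I$: $[\lambda h_K+tf]=\lambda[h_K+(t/\lambda)f]$. You should also discard the tentative first display. Your argument only needs $\tilde f$ to be continuous, so the slip does not affect the proof.
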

\begin{proof}
For $T\in SL(n)$, set
\[
\theta_T(u)=\frac{T^{-\mathsf T}u}{|T^{-\mathsf T}u|},
\qquad w_T(u)=|T^{-\mathsf T}u|.
\]
Apply the weighted transformation identity in~\cite[Lemma~3.14]{XiZhao} to $TK$ and $T^{-1}$. For every $g\in C(\Sn)$, this gives
\begin{equation}\label{eq_affine_F}
\int_{\Sn}g(v)\,d\Fq(TK,v)
=\int_{\Sn}w_T(u)g(\theta_T(u))\,d\Fq(K,u).
\end{equation}
The surface area measure satisfies the same weighted transformation formula~\cite{Schneider}. Substituting the assumed proportionality into~\eqref{eq_affine_F} proves the assertion for $T$.

For a dilation $\lambda>0$, identity~\eqref{eq_Sq} gives $S_q(\lambda K)=\lambda^{(n+q)/q}S_qK$. Formulas~\eqref{eq_dual} and~\eqref{eq_F}, together with the scaling rule for surface area, yield
\[
\Fq(\lambda K)=\lambda^{n-1+n^2/q}\Fq(K),
\qquad \surf(\lambda K)=\lambda^{n-1}\surf(K).
\]
If $\det A>0$, write $A=D^{1/n}T$ with $T\in SL(n)$. For negative determinant, include an orthogonal reflection in the factorization. Orthogonal equivariance follows directly from the defining integrals, and translation invariance was noted above. This proves~\eqref{eq_affine_proportionality}.
\end{proof}

\begin{remark}\label{rem_affine_source}
Proposition~\ref{prop_affine} combines the weighted transformation formula of Xi--Zhao~\cite[Lemma~3.14]{XiZhao} with the classical transformation formula for surface area measure~\cite{Schneider}; the two weights coincide, which is what preserves proportionality.
\end{remark}

As an application of Proposition~\ref{prop_affine}, one can verify the proportionality property for ellipsoids directly.
For a ball $B$ centered at the origin, rotational invariance makes $\Vt_q(B,S_qB,z)$ constant on $\partial B$.
This constant is finite, since an infinite constant would contradict the finiteness of the measure in Theorem~\ref{thm_fractional_integral}.
Formula~\eqref{eq_F} gives the proportionality equation for $B$.
Every ellipsoid is an invertible affine image of a ball, so Proposition~\ref{prop_affine} gives the property for ellipsoids.

\subsection{Facet-transitive polytopes}

A polytope is \emph{facet-transitive} if its Euclidean isometry group acts transitively on its facets. Throughout this section, a facet is a face of codimension one. We prove Theorem~\ref{thm_facet}, so $P$ denotes a full-dimensional facet-transitive convex polytope.

\begin{proof}[Proof of Theorem~\ref{thm_facet}]
For a convex polytope $P$, write $\mathcal F(P)$ for its set of facets. Let $u_F$ be the outer unit normal to $F$ and $a_F=\Haus^{n-1}(F)$ its area. Since lower-dimensional faces have zero boundary area, formula~\eqref{eq_F} gives the atomic representations
\begin{equation}\label{eq_atomic_measures}
\surf(P,\cdot)=\sum_{F\in\mathcal F(P)}a_F\delta_{u_F},
\qquad
\Fq(P,\cdot)=\sum_{F\in\mathcal F(P)}b_{F,q}\delta_{u_F},
\end{equation}
where
\[
b_{F,q}=(q+1)\int_{\relint F}\Vt_q(P,S_qP,z)\,d\Haus^{n-1}(z).
\]
Theorem~\ref{thm_fractional_integral} gives finiteness of these weights; moreover, $b_{F,q}>0$.
Indeed, $\rho_{S_qP}$ is continuous and positive on the compact sphere $\Sn$, hence bounded below by some $\eta>0$.
For each $z\in\relint F$, the directions pointing into $P$ contain a nonempty open subset of a hemisphere, on which $\rho_{P,z}>0$; as $\rho_{P,z}$ is continuous, the integrand in~\eqref{eq_dual} is bounded below by a positive constant on a set of positive spherical measure, so $\Vt_q(P,S_qP,z)>0$.
Integrating over $F$, whose area $a_F$ is positive, proves the claim.

Translate the volume centroid of $P$ to the origin. Every Euclidean isometry preserving $P$ then belongs to $O(n)$. For $U\in O(n)$, changes of variables in the chord, spherical, and boundary integrals give
\[
S_q(UP)=U(S_qP),\qquad
\Vt_q(UP,S_q(UP),Uz)=\Vt_q(P,S_qP,z)
\]
with the second equality holding almost everywhere with respect to boundary area. Both measures in~\eqref{eq_atomic_measures} transform by pushforward under $U$, also when $\det U=-1$.

Facet transitivity therefore makes all $a_F$ equal and all $b_{F,q}$ equal, so the two atomic measures are proportional. Substituting this relation into~\eqref{eq_Xi_integral} and~\eqref{eq_minkowski_integral} determines the constant:
\begin{equation}\label{eq_facet_proportionality}
\Fq(P,\cdot)=\frac{n+q}{2nV(P)}\Phi_q(P)^{n/q}\surf(P,\cdot).
\end{equation}
This constant is positive.
\end{proof}

\begin{remark}\label{rem_symmetry_source}
The orthogonal equivariance and finiteness of the measures in~\cite{XiZhao} ensure equal atomic weights within each facet orbit. Facet transitivity then makes the two measures proportional.
\end{remark}

\subsection{Explicit constructions of polytopal solutions}

Regular polytopes and regular simplices are facet-transitive.
Every simplex is affinely equivalent to a regular simplex, so Theorem~\ref{thm_facet} and Proposition~\ref{prop_affine} also recover the known simplex solutions.
The following constructions lie outside the affine equivalence classes of regular polytopes.
Both are elementary: every assertion below is checked directly from the coordinates.
Invertible affine maps preserve the face lattice, the numbers of facets and vertices, and vertex degrees.
These combinatorial invariants distinguish the required examples.

\begin{proposition}\label{prop_rhombic}
Let
\[
P=\{x\in\mathbb R^3:|x_1|+|x_2|\leq1,
\ |x_1|+|x_3|\leq1,\ |x_2|+|x_3|\leq1\}.
\]
Then $P$ is a facet-transitive rhombic dodecahedron with $12$ facets and $14$ vertices. It satisfies~\eqref{eq_facet_proportionality} for every $q>-1$, $q\ne0$, but is not an affine image of any regular convex polytope or ball.
\end{proposition}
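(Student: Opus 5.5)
The plan is to verify three things from the coordinates: the face and vertex counts, facet transitivity, and the failure of affine equivalence. Theorem~\ref{thm_facet} then supplies~\eqref{eq_facet_proportionality} for all $q>-1$, $q\ne0$.

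First, the description of $P$. Each inequality $|x_i|+|x_j|\le1$ unfolds into four linear inequalities $\pm x_i\pm x_j\le1$, so there are $12$ candidate facet normals $(\pm1,\pm1,0)/\sqrt2$ and their coordinate permutations. I would list the vertices: the six points $\pm e_i$ and the eight points $(\pm\tfrac12,\pm\tfrac12,\pm\tfrac12)$. Each satisfies all inequalities, and at each of them three linearly independent constraints are tight, so each is a vertex. To confirm there are no other vertices, I would note that a vertex needs three independent tight constraints. Solving the possible triples, up to symmetry, gives only these $14$ points; alternatively, one checks that $P$ is the convex hull of these $14$ points.

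For the facets, consider $x_1+x_2=1$. It contains the four vertices $e_1$, $e_2$, $(\tfrac12,\tfrac12,\pm\tfrac12)$. These span a plane and form a rhombus, because its diagonals $e_1-e_2$ and $(0,0,1)$ are orthogonal and bisect each other at $(\tfrac12,\tfrac12,0)$. So each of the $12$ inequalities defines a genuine $2$-dimensional facet, and $P$ is a rhombic dodecahedron.

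Facet transitivity follows from the hyperoctahedral group of coordinate permutations and sign changes. This group preserves the defining inequalities, hence preserves $P$, and it acts transitively on the $12$ vectors $\pm e_i\pm e_j$ with $i<j$. Indeed, permutations move the support $\{i,j\}$ to any pair, and sign changes adjust the signs. Theorem~\ref{thm_facet} then gives~\eqref{eq_facet_proportionality} for every admissible $q$.

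For non-equivalence, I would use the invariants noted before the proposition. An invertible affine map preserves the face lattice, so an affine image of a polytope is a polytope with isomorphic face lattice, and it is never a ball. The three-dimensional regular polytopes have (facets, vertices) equal to $(4,4)$, $(6,8)$, $(8,6)$, $(12,20)$ and $(20,12)$, and none of these is $(12,14)$. A second invariant also separates $P$: it has vertices of two different degrees. The six points $\pm e_i$ lie in four facets each, and the eight cube-type points lie in three facets each, whereas every regular polytope is vertex-transitive. The only step needing real care is the completeness of the vertex list, and the convex-hull check handles it.
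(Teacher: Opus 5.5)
Your proposal is correct and follows essentially the same route as the paper: identify the rhombic facet on $x_1+x_2=1$, obtain facet transitivity from coordinate permutations and sign changes, invoke Theorem~\ref{thm_facet}, and rule out affine equivalence by the (facet, vertex) count $(12,14)$ and by the absence of planar facets on an affine image of a ball. The small differences are harmless: you certify the vertex list through tight constraints instead of observing that every vertex lies on one of the computed rhombi, you add the vertex-degree obstruction, and you omit the computation $V(P)=2$, which the statement as written does not need.
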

\begin{proof}
On the supporting plane $x_1+x_2=1$, write $x_1=s$ and $x_2=1-s$. Its intersection with $P$ is described by
\[
0\leq s\leq1,\qquad |x_3|\leq\min\{s,1-s\}.
\]
This is a rhombus with vertices $e_1,e_2,(1/2,1/2,1/2),(1/2,1/2,-1/2)$. Its diagonals have lengths $\sqrt2$ and $1$, so its area is $1/\sqrt2$. Coordinate permutations and independent sign changes preserve $P$ and send this facet to the other eleven. The unit normals are
\[
\left\{\frac{\varepsilon e_i+\eta e_j}{\sqrt2}:
1\leq i<j\leq3,\quad \varepsilon,\eta\in\{-1,1\}\right\}.
\]
The vertices of these facets are the six points $\pm e_i$ and the eight points $(\pm1/2,\pm1/2,\pm1/2)$.
Every vertex belongs to a facet, so there are no others.
There are $12$ quadrilateral facets, each edge belongs to two facets, and hence the number of edges is $12\cdot4/2=24$; Euler's relation reads $14-24+12=2$.
Write $d_F$ for the distance from $F$ to the origin.
Then $d_F=1/\sqrt2$, and the area calculation gives $a_F=1/\sqrt2$.
Decomposing $P$ into pyramids with bases its facets and apex the origin yields
\[
V(P)=\sum_{F\in\mathcal F(P)}\frac13a_Fd_F
=12\cdot\frac13\cdot\frac1{\sqrt2}\cdot\frac1{\sqrt2}=2.
\]
Theorem~\ref{thm_facet} therefore gives
\[
\Fq(P,\cdot)=\frac{3+q}{12}\Phi_q(P)^{3/q}\surf(P,\cdot).
\]
The facet and vertex counts of the five regular convex polytopes in dimension three are
\[
(4,4),\ (6,8),\ (8,6),\ (12,20),\ (20,12).
\]
None of them has the counts $(12,14)$, so $P$ is not affinely equivalent to any of them. An affine image of a ball is an ellipsoid, whose boundary contains no planar facet, and hence cannot be affinely equivalent to $P$ either.
\end{proof}

\begin{proposition}\label{prop_bipyramid}
For each integer $m\geq3$, define
\[
v_j=(\cos(2\pi j/m),\sin(2\pi j/m),0),\qquad
Q_m=\conv\{v_0,\ldots,v_{m-1},e_3,-e_3\}.
\]
The polytope $Q_m$ is facet-transitive and satisfies~\eqref{eq_facet_proportionality}. For $m=4$, it is the regular octahedron. For $m\ne4$, it is not an affine image of a regular convex polytope or ball. The polytopes corresponding to distinct $m$ are pairwise affinely inequivalent.
\end{proposition}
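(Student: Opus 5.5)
The plan is to follow the proof of Proposition~\ref{prop_rhombic}. First I will describe $Q_m$ explicitly and show that it is facet-transitive. Theorem~\ref{thm_facet} then gives~\eqref{eq_facet_proportionality}. After that I will separate the examples by affinely invariant combinatorial data.

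\textbf{Facets.} The points $v_j$ lie on the unit circle in the plane $x_3=0$, and $\pm e_3$ lie on the axis. Every point of $Q_m$ therefore has $|x_3|\leq1$. The horizontal section at height $x_3$ is the regular $m$-gon $(1-|x_3|)\conv\{v_j\}$. From this description, $Q_m$ is the union of the $2m$ triangles $\conv\{v_j,v_{j+1},\pm e_3\}$, with indices taken mod~$m$.
\begin{itemize}
\item[] To see that each triangle is a facet, I will exhibit a supporting plane. For $\conv\{v_j,v_{j+1},e_3\}$, take the plane $x\cdot w_j+x_3=1$, where $w_j$ is the horizontal vector with $w_j\cdot v_j=w_j\cdot v_{j+1}=1$. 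Then $w_j\cdot v_k<1$ for every $k\neq j,j+1$, because the $v_k$ are the vertices of a convex $m$-gon. So this plane meets $Q_m$ exactly in the triangle.
\end{itemize}
The isometry group contains the rotations by $2\pi/m$ about the $x_3$-axis and the reflection $x_3\mapsto-x_3$. Together these act transitively on the $2m$ triangular facets. Theorem~\ref{thm_facet} then applies and gives~\eqref{eq_facet_proportionality}.

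\textbf{The case $m=4$.} The vertex set is $\{\pm e_1,\pm e_2,\pm e_3\}$, so $Q_4$ is the regular octahedron.

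\textbf{Affine invariants.} An invertible affine map preserves the face lattice. Hence it preserves the number of facets and vertices and every vertex degree. For $Q_m$ we have:
\begin{itemize}
\item[] $2m$ facets and $m+2$ vertices;
\item[] the apexes $\pm e_3$ have degree $m$;
\item[] each equatorial vertex has degree $4$.
\end{itemize}
The pair (facets, vertices) $=(2m,m+2)$ is injective in $m$, so distinct $m$ give affinely inequivalent polytopes.

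\textbf{Comparison with regular polytopes.} The counts of the five Platonic solids are $(4,4),(6,8),(8,6),(12,20),(20,12)$. Setting $(2m,m+2)$ equal to one of these requires $m=2,3,4,6,10$ from the facet count. Only $m=4$ also matches the vertex count, giving $(8,6)$.
\begin{itemize}
\item[] For $m=3$ the counts are $(6,5)$, which differ from the cube's $(6,8)$.
\item[] For $m=6$ and $m=10$ the vertex counts are $8$ and $12$, not $20$ and $12$ respectively. The case $m=10$ is $(20,12)$ against the icosahedron's $(20,12)$, so it needs a further argument. The icosahedron has all vertex degrees equal to $5$, while $Q_{10}$ has vertices of degree $10$ and $4$. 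Alternatively, for $m\neq4$ the vertex degrees of $Q_m$ are not all equal, whereas a regular polytope has all vertex degrees equal.
\end{itemize}

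\textbf{Comparison with ellipsoids.} An affine image of a ball is an ellipsoid. Its boundary contains no planar facet, so it cannot be affinely equivalent to $Q_m$.

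\textbf{Main obstacle.} The only step that needs care is the facet description, in particular the strict inequality $w_j\cdot v_k<1$ for $k\neq j,j+1$. It holds because consecutive vertices of a regular $m$-gon span its edges. This also confirms that there are no other facets: the $2m$ triangles already cover the boundary.
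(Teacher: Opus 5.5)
Your proposal is correct and follows essentially the same route as the paper: the same supporting planes (your $w_j$ is the paper's $a_j/\cos(\pi/m)$), the same rotation--reflection transitivity feeding into Theorem~\ref{thm_facet}, and the same separation by facet and vertex counts, with vertex degrees settling $m=10$. Your alternative uniform argument is a slightly cleaner finish: for $m\ne4$, $Q_m$ has unequal vertex degrees $m$ and $4$, while regular polytopes are vertex-transitive.

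Two expository slips should be fixed. It is $\partial Q_m$, not $Q_m$, that is the union of the $2m$ triangles. The sentence claiming that only $m=4$ matches the vertex count is false as written, since the pair $(20,12)$ for $m=10$ coincides with the icosahedron's, as you yourself note immediately afterwards.
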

\begin{proof}
Put $c=\cos(\pi/m)>0$ and
\[
a_j=(\cos((2j+1)\pi/m),\sin((2j+1)\pi/m)).
\]
The supporting planes
\[
a_j\cdot(x_1,x_2)+cx_3=c,
\qquad a_j\cdot(x_1,x_2)-cx_3=c
\]
cut out the triangles $\conv(v_j,v_{j+1},e_3)$ and $\conv(v_j,v_{j+1},-e_3)$, respectively, with indices taken modulo $m$. All remaining generating vertices satisfy the corresponding inequality strictly. These inequalities describe the upper and lower pyramids over the regular polygon, so $Q_m$ has exactly $2m$ facets and $m+2$ vertices.

Rotation by $2\pi/m$ about the $e_3$ axis acts transitively on each family of facets, and reflection across $x_3=0$ exchanges the families. Theorem~\ref{thm_facet} applies. The $m$ equatorial edges and $2m$ edges joining the equatorial vertices to the two apices give $3m$ edges, consistent with $(m+2)-3m+2m=2$.

For $m=4$, the vertex set is $\{\pm e_1,\pm e_2,\pm e_3\}$, so $Q_4$ is the regular octahedron.
Every facet of $Q_m$ is triangular.
Thus, if $Q_m$ were affinely equivalent to a regular convex polytope, that polytope would have to be the regular tetrahedron, octahedron, or icosahedron.
Facet and vertex counts exclude the tetrahedron and leave only $m=4$ and $m=10$ for the latter two.
In particular, the counts $(6,5)$ for $m=3$ do not match any regular polytope.
For $m=10$, the two apices $\pm e_3$ have degree $10$, while the remaining ten vertices have degree $4$; every vertex of the regular icosahedron has degree $5$.
The degree counts exclude affine equivalence in this case.
An ellipsoid has no planar facet, so it cannot be affinely equivalent to $Q_m$ either.
Finally, distinct $m$ give distinct vertex counts and hence pairwise affinely inequivalent polytopes.
This completes the proof of Theorem~\ref{thm_examples}: the two constructions are solutions by Theorem~\ref{thm_facet}, and the combinatorial invariants above separate them from regular polytopes, from balls and from each other.
\end{proof}

\subsection{Duals of vertex-transitive polytopes}\label{sec_duals}

Theorem~\ref{thm_facet} converts every vertex-transitive polytope into a solution by taking its polar with respect to its volume centroid.
For a full-dimensional polytope $P$ and an interior point $z$, let
\[
P^\circ_z=\{y\in\mathbb R^n:(x-z)\cdot(y-z)\le1\ \text{for all }x\in P\}
\]
be its polar with respect to $z$. Unless another point is specified, polars in this subsection are taken with respect to the volume centroid.

\begin{lemma}\label{lem_duality}
Let $P$ be a full-dimensional polytope and let $z\in\interior P$ be fixed by every isometry preserving $P$.
Every such isometry also preserves $P^\circ_z$, and the bijection between the vertices of $P$ and the facets of $P^\circ_z$ is equivariant under these isometries.
In particular, if $P$ is vertex-transitive, then $P^\circ_z$ is facet-transitive.
\end{lemma}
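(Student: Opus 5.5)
The plan is to reduce everything to the origin and then use the linear identity $(Ux)\cdot(Uy)=x\cdot y$ for orthogonal $U$. First translate so that $z=0$. Let $g$ be an isometry with $g(P)=P$. Then $g(x)=Ux+b$ with $U\in O(n)$, and since $g(z)=z$ we get $b=0$, so $g=U$ is linear orthogonal. The polar $P^\circ_0=\{y:x\cdot y\le 1\ \text{for all }x\in P\}$ is then handled directly. For $y\in P^\circ_0$ and $x\in P$ we have $x\cdot Uy=U^{\mathsf T}x\cdot y$. Here $U^{\mathsf T}=U^{-1}$ also preserves $P$, so this quantity is at most $1$. Hence $U(P^\circ_0)\subseteq P^\circ_0$, and applying the same argument to $U^{-1}$ gives equality. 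This proves the first assertion.

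For the equivariant bijection, I will use the standard description of the polar of a polytope with the origin in its interior (cf.~\cite{Schneider}). Write $P=\conv\{x_1,\dots,x_N\}$ with the $x_i$ its vertices. Then $P^\circ_0=\bigcap_i\{y:x_i\cdot y\le1\}$ is a full-dimensional polytope. It is bounded because $0\in\interior P$, and it has $0$ in its interior because $P$ is bounded. The facets of $P^\circ_0$ are exactly the sets $F_i=P^\circ_0\cap\{y:x_i\cdot y=1\}$, and $x_i\mapsto F_i$ is a bijection.

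I expect this facet claim to be the only point needing care. Each inequality $x_i\cdot y\le1$ must be facet-defining and irredundant, and distinct vertices must give distinct facets. I would argue it by bipolarity, $(P^\circ_0)^\circ_0=P$. If the inequality for $x_i$ were redundant, or its face had dimension less than $n-1$, then $x_i$ would be a convex combination of the other $x_j$ together with possibly $0$. Since $0\in\interior P$, this contradicts $x_i$ being a vertex. Distinct $x_i$ give distinct normal directions $x_i/|x_i|$ or distinct support values, so the facets $F_i$ are distinct. I would cite this as the standard vertex--facet duality rather than reprove it in full.

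Equivariance is then a one-line computation. For $U$ as above, $U(F_i)=U(P^\circ_0)\cap\{Uy:x_i\cdot y=1\}=P^\circ_0\cap\{w:(Ux_i)\cdot w=1\}=F_{j}$, where $Ux_i=x_j$ because $U$ permutes the vertices of $P$. If $P$ is vertex-transitive, its symmetry group acts transitively on the $x_i$, so it acts transitively on the $F_i$, and these are all the facets. The volume centroid is fixed by every isometry of $P$, since isometries preserve volume and commute with averaging. So the final assertion applies with $z$ the volume centroid, and the polar with respect to $z$ is facet-transitive.
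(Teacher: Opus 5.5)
Your proposal is correct and follows the paper's proof: you translate $z$ to the origin so that the symmetries of $P$ become orthogonal maps, use $(Ux)\cdot(Uy)=x\cdot y$ to see that they preserve the polar, and check $U(F_v)=F_{Uv}$ for the facets $F_v=\{y\in P^\circ: v\cdot y=1\}$. The only difference is that you sketch a bipolarity justification of the vertex--facet correspondence, which the paper simply takes as standard.
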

\begin{proof}
Translate $z$ to the origin. Every isometry preserving $P$ then belongs to $O(n)$.
Polarity satisfies $(UP)^\circ=U(P^\circ)$ for $U\in O(n)$, since $(Ux)\cdot(Uy)=x\cdot y$.
Thus $UP=P$ implies $U(P^\circ)=P^\circ$.
Because the origin lies in the interior of $P$, its vertices are in bijection with the facets of $P^\circ$: the facet corresponding to a vertex $v$ is $F_v=\{y\in P^\circ:v\cdot y=1\}$.
For every orthogonal symmetry $U$ of $P$, the identity $U(F_v)=F_{Uv}$ proves equivariance of this correspondence.
Consequently, if the isometry group of $P$ acts transitively on its vertices, this same group acts transitively on the facets of $P^\circ$.
Translating back proves the assertions.
\end{proof}

\begin{corollary}\label{cor_duals}
The polar of every full-dimensional vertex-transitive polytope with respect to its volume centroid satisfies~\eqref{eq_proportional_intro} for all $q>-1$, $q\ne0$.
In dimension three this yields the following solutions.
\begin{enumerate}
\item[(i)] The thirteen Catalan solids, the polars of the Archimedean solids, are solutions that are not affinely equivalent to regular polytopes or balls.
\item[(ii)] The $m$-gonal trapezohedra with $m\ge4$, the polars of the uniform antiprisms, are solutions that are not affinely equivalent to regular polytopes or balls.
\item[(iii)] The $m$-gonal bipyramids, the polars of the uniform prisms, recover Proposition~\ref{prop_bipyramid}.
\end{enumerate}
\end{corollary}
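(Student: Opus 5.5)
The general statement follows by combining Lemma~\ref{lem_duality} with Theorem~\ref{thm_facet}. Let $P$ be a full-dimensional vertex-transitive polytope and $p$ its volume centroid. Every isometry preserving $P$ fixes $p$, because isometries are affine and preserve volume, and $p\in\interior P$. Lemma~\ref{lem_duality} therefore makes $P^\circ_p$ facet-transitive. The polar of a full-dimensional polytope about an interior point is again a full-dimensional polytope, so Theorem~\ref{thm_facet} gives~\eqref{eq_facet_proportionality} for $P^\circ_p$, hence~\eqref{eq_proportional_intro} with a positive constant, for all $q>-1$, $q\ne0$.

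For the three-dimensional list, I would identify each family and then separate it from the regular polytopes and from ellipsoids, using only affine invariants as in Propositions~\ref{prop_rhombic} and~\ref{prop_bipyramid}.

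\textbf{Ellipsoids.} Every polytope has planar facets, so none is an affine image of a ball.

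\textbf{Catalan solids.} The Archimedean solids are vertex-transitive, their centroid is the common centre of symmetry, and their polars are the Catalan solids. The polar of a polytope has facet count equal to the original vertex count and vice versa. So the (facets, vertices) pair of a Catalan solid is the (vertices, facets) pair of its Archimedean solid, and these can be checked from a table against $(4,4),(6,8),(8,6),(12,20),(20,12)$. The Archimedean pairs are $(12,8),(12,14),(24,14),(24,26),(48,26),(24,32),(60,32),(60,38),(60,62),(120,62),(24,38),(30,32),(60,92)$ as (vertices, facets). Reversed, none matches a regular pair. The pair $(12,14)$ reversed gives $(14,12)$ for the cuboctahedron's dual, which also fails to match.

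\textbf{Trapezohedra.} The uniform antiprism over an $m$-gon has $2m$ vertices and $2m+2$ facets. Its polar, the $m$-gonal trapezohedron, therefore has $2m$ facets and $2m+2$ vertices. These counts match a regular pair only for $m=3$, which gives $(6,8)$, the cube; this is why that case is excluded by the hypothesis $m\ge4$. For $m\ge4$ the counts $(2m,2m+2)$ never equal $(4,4),(6,8),(8,6),(12,20),(20,12)$: the value $m=6$ gives $(12,14)$, and $m=10$ gives $(20,22)$.

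\textbf{Bipyramids.} The uniform prism over a regular $m$-gon is vertex-transitive, and its polar about the centre is, up to a dilation of the axis, the bipyramid $Q_m$. Proposition~\ref{prop_affine} shows that affine maps preserve the proportionality. This recovers Proposition~\ref{prop_bipyramid}.

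\textbf{Main obstacle.} The only delicate step is bookkeeping. One must confirm that the Catalan and trapezohedron counts avoid the regular pairs, and must exclude the antiprism with $m=3$, the octahedron, whose polar is the cube.
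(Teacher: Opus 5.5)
Your proof follows the paper's: isometries fix the centroid, Lemma~\ref{lem_duality} and Theorem~\ref{thm_facet} give the general claim, and facet and vertex counts together with planar facets separate the three-dimensional families from regular polytopes and ellipsoids. As in the paper, you also use that the prism polars are affine images of $Q_m$.

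The one thing to fix is the table, which you yourself call the delicate step. The pairs $(24,32)$ and $(60,38)$ are not counts of any Archimedean solid: Euler's relation together with vertex-transitivity would force vertex degrees $4.5$ and $3.2$. The truncated octahedron has $24$ vertices and $14$ faces, and the truncated icosahedron has $60$ vertices and $32$ faces. These duplicate the truncated cube and truncated dodecahedron entries.

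Also, no reversal is needed. The Archimedean (vertices, facets) pair is already the Catalan (facets, vertices) pair, e.g.\ $(12,14)$ for the rhombic dodecahedron, not $(14,12)$. This slip is harmless because the list of regular pairs is closed under swapping.

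With the corrected entries, none of the thirteen pairs is regular, so your conclusion stands.
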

\begin{proof}
The volume centroid of a full-dimensional polytope lies in its interior and is fixed by every isometry preserving it. Lemma~\ref{lem_duality} therefore makes the polar of a vertex-transitive polytope facet-transitive, and Theorem~\ref{thm_facet} gives the first assertion. The polyhedral groups of the Archimedean solids, the uniform antiprisms and prisms act transitively on their vertices~\cite{Coxeter}, so the assertion applies to all the examples listed above.

For (i), affine maps preserve the numbers of facets and of vertices, so it suffices to compare these invariants with those of the five regular polytopes.
The polar of an Archimedean solid has as many facets as the solid has vertices and as many vertices as the solid has facets.
For the truncated tetrahedron, the cuboctahedron, the truncated cube, the truncated octahedron, the rhombicuboctahedron, the snub cube, the truncated cuboctahedron, the icosidodecahedron, the truncated dodecahedron, the truncated icosahedron, the rhombicosidodecahedron, the snub dodecahedron and the truncated icosidodecahedron, the resulting pairs are
\[
\begin{aligned}
&(12,8),\ (12,14),\ (24,14),\ (24,14),\ (24,26),\ (24,38),\ (48,26),\\
&(30,32),\ (60,32),\ (60,32),\ (60,62),\ (60,92),\ (120,62),
\end{aligned}
\]
and none of them occurs among $(4,4)$, $(6,8)$, $(8,6)$, $(12,20)$, $(20,12)$; an ellipsoid, on the other hand, has no planar facet.

For (ii), the polar of the $m$-gonal antiprism has $2m$ facets and $2m+2$ vertices, and for $m\ge4$ such a pair is again not among the five; the remaining case $m=3$ gives the cube.
For (iii), the polar of the $m$-gonal prism is affinely equivalent to the bipyramid $Q_m$ of Proposition~\ref{prop_bipyramid}, whose properties have already been verified.
\end{proof}

The argument uses only vertex transitivity, so it applies in every dimension: the polar of any vertex-transitive polytope with respect to its volume centroid, for instance of a uniform one in higher dimensions, is again a solution.
Vertex transitivity is not a redundant hypothesis here: the elongated square gyrobicupola has congruent vertex figures but is not vertex-transitive, and its polar is therefore not covered by Lemma~\ref{lem_duality}.

\subsection{A facet orbit criterion}

For a polytope with several facet orbits, the proportionality equation reduces to equality of the measure weight ratios across the orbits.

\begin{proposition}\label{prop_orbits}
Center a convex polytope $P$ at its volume centroid, and let $\mathcal O_1,\ldots,\mathcal O_k$ be the facet orbits of its orthogonal symmetry group. Let $a_i>0$ and $b_{i,q}>0$ be the area and fractional measure weight of any facet in the $i$th orbit. Then
\begin{equation}\label{eq_orbit_criterion}
\Fq(P,\cdot)\propto\surf(P,\cdot)
\quad\Longleftrightarrow\quad
\frac{b_{1,q}}{a_1}=\cdots=\frac{b_{k,q}}{a_k}.
\end{equation}
\end{proposition}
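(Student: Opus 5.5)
The plan is to reuse the atomic representation~\eqref{eq_atomic_measures} from the proof of Theorem~\ref{thm_facet} together with the orthogonal equivariance established there. First, with the volume centroid at the origin, every isometry preserving $P$ lies in $O(n)$. The identities $S_q(UP)=U(S_qP)$ and $\Vt_q(UP,S_q(UP),Uz)=\Vt_q(P,S_qP,z)$ (for almost every $z$ in the boundary) then show that $a_F$ and $b_{F,q}$ are constant on each facet orbit. Hence $a_i$ and $b_{i,q}$ are well defined, independently of the representative facet. Positivity of $a_i$ is clear, and positivity of $b_{i,q}$ was shown in the proof of Theorem~\ref{thm_facet}.

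Next I write the two measures orbitwise:
\[
\surf(P,\cdot)=\sum_{i=1}^k a_i\sum_{F\in\mathcal O_i}\delta_{u_F},\qquad
\Fq(P,\cdot)=\sum_{i=1}^k b_{i,q}\sum_{F\in\mathcal O_i}\delta_{u_F}.
\]
Distinct facets of a convex polytope have distinct outer unit normals, so the atoms $\delta_{u_F}$ are supported at pairwise distinct points.

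For the forward direction, suppose $\Fq(P,\cdot)=c\,\surf(P,\cdot)$. Evaluating both measures on the singleton $\{u_F\}$ for any $F\in\mathcal O_i$ gives $b_{i,q}=c\,a_i$. Therefore every ratio $b_{i,q}/a_i$ equals $c$.

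For the converse, suppose all the ratios equal a common value $c$. Then the two atomic measures agree atom by atom up to the factor $c$, so $\Fq(P,\cdot)=c\,\surf(P,\cdot)$. By Proposition~\ref{prop_stationary}, or by pairing with $h_P$ and using~\eqref{eq_Xi_integral} and~\eqref{eq_minkowski_integral}, the constant is necessarily $c_q(P)$.

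The only step needing care is the well-definedness of $b_{i,q}$ across an orbit, which rests on the equivariance of $\Vt_q$ under the symmetry group. That equivariance was already verified in the proof of Theorem~\ref{thm_facet}, so the remainder is bookkeeping.
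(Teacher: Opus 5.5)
Your proposal is correct and follows essentially the same route as the paper's proof: orthogonal equivariance makes $a_F$ and $b_{F,q}$ constant on each orbit, positivity comes from the proof of Theorem~\ref{thm_facet}, and the atoms are compared one by one using that distinct facets have distinct outer unit normals. Your additional identification of the constant as $c_q(P)$ is not needed for the equivalence, but it is consistent with Proposition~\ref{prop_stationary}.
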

\begin{proof}
Orthogonal equivariance makes facet areas and fractional weights constant within each orbit. Their positivity was proved after~\eqref{eq_atomic_measures}. Distinct facets have distinct outer unit normals, so comparison of atomic weights gives~\eqref{eq_orbit_criterion}. The common ratio is necessarily positive.
\end{proof}

Facet transitivity is sufficient for this property, but not necessary. For example, the tetrahedron
\[
P_0=\conv\{0,e_1,2e_2,3e_3\}
\]
is affinely equivalent to a regular tetrahedron and thus satisfies the proportionality equation by Theorem~\ref{thm_facet} and Proposition~\ref{prop_affine}. Its six edge lengths are $1,2,3,\sqrt5,\sqrt{10},\sqrt{13}$, all distinct. Every Euclidean symmetry must preserve each edge and hence fix each vertex. Its symmetry group is therefore trivial, and its four facets belong to distinct orbits.

This example shows that criterion~\eqref{eq_orbit_criterion} tests measure weight ratios without requiring facet transitivity. The same orbit-by-orbit comparison can be used for prisms, antiprisms, and other polytopes with several facet orbits.

\subsection{Stationarity for volume ratios with two parameters}

A convex body satisfying the proportionality equation for two parameters is also stationary for the corresponding volume ratio. For distinct nonzero parameters $p,q>-1$, define the logarithmic functional
\begin{equation}\label{eq_Jpq}
J_{p,q}(K)=-\left(\frac1p-\frac1q\right)\log V(K)
+\frac1p\log\Phi_p(K)-\frac1q\log\Phi_q(K).
\end{equation}
Identity~\eqref{eq_SR} gives the exact relation
\begin{equation}\label{eq_Jpq_ratio}
J_{p,q}(K)=\frac1n\log\frac{V(R_pK)}{V(R_qK)}
+\frac{\log(p+1)}p-\frac{\log(q+1)}q.
\end{equation}
Thus $J_{p,q}$ and the corresponding volume ratio have the same Wulff stationary bodies. Xi--Zhao~\cite[Theorem~6.2]{XiZhao} derived the measure equation for this logarithmic functional from extremality. We substitute the two single-parameter proportionality relations into that equation to obtain stationarity.

\begin{corollary}\label{cor_common_stationarity}
Let $p,q>-1$ be distinct nonzero parameters. If
\[
F_s^{(a)}(K,\cdot)=\frac{n+s}{2nV(K)}\Phi_s(K)^{n/s}\surf(K,\cdot),
\]
holds for both $s\in\{p,q\}$, then $K$ is Wulff stationary for $J_{p,q}$. In particular, every facet-transitive polytope has this stationarity property for two parameters; one of them may equal $n$.
\end{corollary}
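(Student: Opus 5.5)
The plan is to compute the Wulff first variation of $J_{p,q}$ directly from Theorem~\ref{thm_first_variation} and check that the measure it produces vanishes.

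\textbf{Step 1.} Fix $f\in C(\Sn)$ and put $K_t=[h_K+tf]$. Differentiate each of the three logarithmic terms in~\eqref{eq_Jpq} at $t=0$. Formula~\eqref{eq_variation_volume} gives
\[
\left.\frac{d}{dt}\right|_{t=0}\log V(K_t)=V(K)^{-1}\int f\,d\surf(K).
\]
Formula~\eqref{eq_variation_Phi} gives, for $s\in\{p,q\}$,
\[
\left.\frac{d}{dt}\right|_{t=0}\log\Phi_s(K_t)=2\Phi_s(K)^{-n/s}\int f\,dF^{(a)}_s(K).
\]
Both derivatives are two-sided and $V(K),\Phi_s(K)>0$, so the chain rule for $\log$ applies.

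\textbf{Step 2.} Collect the terms. The derivative of $J_{p,q}(K_t)$ at $t=0$ equals $\int f\,d\mu$ with
\[
\mu=-\Bigl(\frac1p-\frac1q\Bigr)\frac{\surf(K)}{V(K)}+\frac2p\Phi_p(K)^{-n/p}F^{(a)}_p(K)-\frac2q\Phi_q(K)^{-n/q}F^{(a)}_q(K).
\]
Substitute the hypothesis. For each $s$, the quantity $2\Phi_s^{-n/s}F^{(a)}_s$ becomes $\frac{n+s}{nV(K)}\surf(K)$. The coefficient of $\surf(K)/V(K)$ is then
\[
-\frac1p+\frac1q+\frac{n+p}{np}-\frac{n+q}{nq}=-\frac1p+\frac1q+\frac1n+\frac1p-\frac1n-\frac1q=0.
\]
So $\mu=0$, and $K$ is Wulff stationary for $J_{p,q}$. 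By~\eqref{eq_Jpq_ratio}, the same holds for the ratio $V(R_pK)/V(R_qK)$.

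\textbf{Step 3.} For the "in particular" clause, Theorem~\ref{thm_facet} supplies the hypothesis for every $s>-1$, $s\ne0$, including $s=n$. At $s=n$ the identity also holds for every body, by Proposition~\ref{prop_degenerate_index}: there $\Phi_n=(n+1)V^2$, so $\frac{n+n}{2nV}\Phi_n=(n+1)V$, matching~\eqref{eq_qn}. Hence one parameter may equal $n$.

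The only point needing care is that the variation formulas of Theorem~\ref{thm_first_variation} apply for all admissible $s$, including $s<0$ and $s=n$. This is assumed there, so no genuine obstacle is expected.
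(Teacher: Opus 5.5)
Your proposal is correct and follows the paper's proof essentially step for step. You differentiate $J_{p,q}$ using \eqref{eq_variation_Phi}--\eqref{eq_variation_volume}, substitute both proportionality relations so that the coefficient of $\surf(K)$ cancels, and invoke Theorem~\ref{thm_facet} and Proposition~\ref{prop_degenerate_index} for the final clause (your explicit check that $\frac{2n}{2nV}\Phi_n=(n+1)V$ matches \eqref{eq_qn} is a welcome consistency detail).
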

\begin{proof}
By~\eqref{eq_variation_Phi}--\eqref{eq_variation_volume}, the first derivative of~\eqref{eq_Jpq} in the Wulff direction $f$ is
\[
\int_{\Sn}f\,d\left[
-\frac{1/p-1/q}{V(K)}\surf(K)
+\frac{2}{p\Phi_p(K)^{n/p}}F_p^{(a)}(K)
-\frac{2}{q\Phi_q(K)^{n/q}}\Fq(K)\right].
\]
When the two proportionality equations hold, the coefficient of $\surf(K)$ is
\[
-\frac{1/p-1/q}{V(K)}
+\frac{n+p}{npV(K)}-\frac{n+q}{nqV(K)}=0.
\]
This proves stationarity. If one parameter equals $n$, Proposition~\ref{prop_degenerate_index} supplies the required proportionality relation and the same cancellation applies. The assertion for facet-transitive polytopes follows from Theorem~\ref{thm_facet}. By~\eqref{eq_Jpq_ratio}, the conclusions also hold for the radial mean body volume ratio.
\end{proof}

\begin{remark}\label{rem_stationarity_extremality}
Stationarity does not characterize extremality for this functional.
For nonzero parameters $-1<p\leq n\leq q$ with $p<q$, equality in the upper bound of Haddad--Ludwig holds precisely for ellipsoids, and equality in the lower bound of Gardner--Zhang holds precisely for simplices; see also Xi--Zhao~\cite[Theorem~1.4 and the ensuing discussion]{XiZhao}.
The rhombic dodecahedron of Proposition~\ref{prop_rhombic} and the bipyramids of Proposition~\ref{prop_bipyramid} with $m\ne4$ are neither ellipsoids nor simplices, so by Corollary~\ref{cor_common_stationarity} they are stationary for the corresponding volume ratios while attaining neither the global maximum nor the global minimum.
\end{remark}

\section{Concluding remarks and open questions}\label{sec_conclusion}

We have classified the strictly convex solutions of the nondegenerate proportionality equation: they are exactly the ellipsoids.
The facet-transitive polytope theorem extends the observation of Xi and Zhao for regular polytopes to a larger family of solutions.
The rhombic dodecahedron and the bipyramids over regular polygons show that regular polytopes, balls, and their affine images do not exhaust the solutions; by Corollary~\ref{cor_duals} this remains true for the polars of all vertex-transitive polytopes, among them the thirteen Catalan solids.
For negative parameters, the rigidity comparison is supplied by minimization over bounded measurable sets and continuous symmetrization of Lipschitz star bodies.
Our results leave two things open: the classification of solutions that are not strictly convex, a class which is nonempty by the examples of Section~\ref{sec_polytopes}, and the dependence of the solution set on the parameter $q$.

For fixed $q>-1$, $q\ne0,n$, two further questions arise in the classification of solutions that are not strictly convex: (i) is every three-dimensional polytopal solution affinely equivalent to a facet-transitive polytope, and (ii) does there exist a solution with smooth boundary containing a nontrivial line segment?
Question~(i) can be studied through~\eqref{eq_orbit_criterion} for polytopes with two or more facet orbits, while question~(ii) concerns the role of strict convexity in the classification.
The number of solutions of the orbit weight equations and their dependence on $q$ also remain to be investigated.

The local extremal type of bodies stationary for volume ratios with two parameters is a further question.
Even in the parameter range where sharp upper and lower bounds are known, failure of our examples to attain a global extremum does not determine whether they are local extremizers or saddle points.
Their second variations under volume-preserving perturbations may help distinguish stationarity from extremality.

\section*{Acknowledgements}
{\sloppy This work was supported by the National Natural Science Foundation of China (Grant No.~12571489) and Guangxi Natural Science Foundation (No.~2025GXNSFAA069576).\par}

\appendix

\section{Technical proofs}\label{app_technical}

\subsection{Proof of Lemma~\ref{lem_positive_energy}}\label{app_energy_pos}

\begin{proof}
Polar coordinates, identity~\eqref{eq_covariogram_fiber}, and Tonelli's theorem give~\eqref{eq_E_identity}, since
\[
\int_0^X(X-s)s^{q-1}\,ds=\frac{X^{q+1}}{q(q+1)}.
\]
For the extremal representation, set $d\mu=\rho_L^n\,du$, $a=q/n$, and $Z=s_H^n/\rho_L^n$.
The volume normalization makes $\mu$ a probability measure.
Then $\int Z\,d\mu=J(H)$, whereas the right-hand side of~\eqref{eq_E_identity} is $\int Z^a\,d\mu$.
Jensen's inequality gives an upper bound when $0<a<1$ and a lower bound when $a>1$.
Equality holds for $L=L_H$, which belongs to the admissible class by Theorem~\ref{thm_positive_convexity}.

Finally, the layer-cake representation of $\|x-y\|_L^{q-n}$ gives~\eqref{eq_layer_positive} and~\eqref{eq_layer_supercritical}.
\end{proof}

\subsection{Proof of Lemma~\ref{lem_negative_measurable}}\label{app_energy_neg}

\begin{proof}
Identity~\eqref{eq_covariogram_fiber} gives
\[
D_H(su):=V(H)-G_H(su)=\int_{u^\perp}\min\{s,X_H(y,u)\}\,dy.
\]
Changing variables by $z=rw$ in~\eqref{eq_G_T} and applying Tonelli's theorem to the nonnegative deficit, we obtain
\[
N_q(H,L)=\int_L\int_0^\infty r^{q-1}D_H(rw)\,dr\,dw.
\]
For $w=|w|u\ne0$ the inner integral evaluates as
\[
|w|^{-q}\int_0^\infty s^{q-1}D_H(su)\,ds
=-\frac{|w|^{-q}s_H(u)^q}{q(q+1)},
\]
because $\int_0^\infty\min\{s,X\}s^{q-1}\,ds=-X^{q+1}/[q(q+1)]$ for $X>0$ and $-1<q<0$.
This proves~\eqref{eq_N_measurable}. The integrand in~\eqref{eq_N_measurable} is continuous, nonnegative and homogeneous of degree $-q$, and it vanishes at the origin; hence it is bounded on bounded sets, which proves the finiteness of $N_q(H,L)$.

Write $w_H(z)=\|z\|_{S_qH}^{-q}$ and $\lambda=J(H)^{-1/n}$. Since $-q>0$, the set $L_H=\lambda S_qH$ is exactly the sublevel set $\{w_H\leq\lambda^{-q}\}$. For any competitor $L$ of the same volume,
\[
\int_Lw_H-\int_{L_H}w_H
=\int_{L\setminus L_H}(w_H-\lambda^{-q})
-\int_{L_H\setminus L}(w_H-\lambda^{-q})\geq0.
\]
This sublevel set comparison proves the minimality of $L_H$. Polar integration also gives
\[
\int_{L_H}w_H(z)\,dz
=\frac1{n-q}\int_{\Sn}s_H^q(\lambda s_H)^{n-q}\,du
=\frac{M(H)}{n-q}.
\]
Multiplying~\eqref{eq_N_measurable} by $C_q$ yields~\eqref{eq_negative_extreme}.
\end{proof}

\subsection{The sectionwise flow and the proof of Lemma~\ref{lem_overlap_monotonicity}}\label{app_flow}

For a finite union of disjoint closed intervals $J=\bigcup_{i=1}^k[\ell_i-B_i,\ell_i+B_i]$, with $B_i>0$, let $\tau$ be the first contact time when the centers move as $(1-t)\ell_i$; for a single interval, set $\tau=1$. The time parametrization in~\cite[Section~4.2]{MSY} is
\begin{equation}\label{eq_interval_flow}
S^tJ=
\begin{cases}
\displaystyle\bigcup_{i=1}^k[(1-t)\ell_i-B_i,(1-t)\ell_i+B_i],
&0\leq t\leq\tau,\\[2pt]
\displaystyle S^{(t-\tau)/(1-\tau)}(S^\tau J),
&\tau<t\leq1\quad(\tau<1).
\end{cases}
\end{equation}
At time $\tau$, merge touching intervals and apply the formula recursively to the resulting smaller collection. Each merger reduces the number of intervals, so the recursion terminates after finitely many steps, and the total length is preserved.

Applying~\eqref{eq_interval_flow} to each section over $c\in\Omega_\infty$ and taking the closure of their union gives $L_t=S_e^tL$~\cite[Definition~4.7]{MSY}; by~\cite[Corollary~4.10]{MSY} the closure does not alter these sections, by~\cite[Corollary~4.12]{MSY} we have $V(L_t)=V(L)$, and by~\cite[Proposition~5.8]{MSY} each $L_t$ is again a Lipschitz star body.
Reflection in the origin commutes with translation, merger and closure, so $L=-L$ implies $L_t=-L_t$; positive dilations commute with the path as well.
For a convex body, each section over the interior of its projection is a single interval, its centre in~\eqref{eq_interval_flow} is $(1-t)m(a)$ and its half-length is $A(a)$, so taking the closure gives $S_e^tK=K_t$, exactly as in~\eqref{eq_Kt}~\cite[Remark~5.10]{MSY}.

\begin{proof}[Proof of Lemma~\ref{lem_overlap_monotonicity}]
Fix $r>0$, and let $Z\subset e^\perp$ be the exceptional null set in the multi-graphical representation. For fixed $b$, the map $a\mapsto(a-b)/r$ is an invertible affine map, so $\{a:(a-b)/r\in Z\}$ is null. Fubini's theorem gives
\[
\bigl|\{(a,b)\in D\times D:(a-b)/r\in Z\}\bigr|=0.
\]
For the remaining pairs, set $c=(a-b)/r$. If $c$ lies outside the projection of $L$, its contribution is zero throughout the path. Otherwise, the section is a finite union of intervals. Before the first contact, write $\ell_i(c)$ and $B_i(c)$ for their initial centers and half-lengths. Integration along the two chords of $K$ and evenness of $\Psi$ give the contribution to $T_r$ denoted by $\Gamma_t(a,b,r)$ in the sketch above; the unconditional monotonicity in Lemma~\ref{lem_overlap} shows that each term, and hence $\Gamma_t(a,b,r)$ itself, is nondecreasing in $t$.
At contact, the union of touching intervals differs from the merged interval only by repeated endpoints, so the overlap integral is continuous.
Suppose a merger occurs at $\tau<1$, and let a resulting interval have center $\ell(\tau)$ and half-length $B$ at that time.
Until the next contact, formula~\eqref{eq_interval_flow} gives
\[
\ell(t)=\frac{1-t}{1-\tau}\ell(\tau),\qquad
d(t)=(1-t)[m(a)-m(b)]-r\ell(t)
=\frac{1-t}{1-\tau}d(\tau).
\]
Thus $|d(t)|\leq|d(\tau)|$, while $A(a),A(b),rB$ remain fixed. Monotonicity of $\Psi$ in $|d|$ gives the comparison on this stage. Each merger strictly reduces the number of intervals, so finitely many iterations cover $[0,1]$ for the fixed section.

The preceding argument therefore gives $\Gamma_t(a,b,r)\geq\Gamma_0(a,b,r)$ for every remaining pair. Identity~\eqref{eq_G_T} and Fubini's theorem then yield
\begin{equation}\label{eq_slice_gain}
T_r(K_t,L_t)-T_r(K,L)
=\int_{D\times D}[\Gamma_t(a,b,r)-\Gamma_0(a,b,r)]\,da\,db\geq0.
\end{equation}
The sections covered by the multi-graphical representation are unchanged by closure, so this is the overlap integral for $L_t$. When $L$ is convex, every interior section is a single interval and the same argument applies in every direction.
\end{proof}


\begin{thebibliography}{99}
\bibitem{XiZhao}
D. Xi and Y. Zhao,
\emph{Fractional affine area measures},
J. Eur. Math. Soc., to appear (2026).

\bibitem{GardnerZhang}
R. J. Gardner and G. Zhang,
\emph{Affine inequalities and radial mean bodies},
Amer. J. Math. \textbf{120} (1998), no.~3, 505--528.
\href{https://doi.org/10.1353/ajm.1998.0021}{doi:10.1353/ajm.1998.0021}.

\bibitem{HaddadLudwig}
J. Haddad and M. Ludwig,
\emph{Affine fractional Sobolev and isoperimetric inequalities},
J. Differential Geom. \textbf{129} (2025), no.~3, 695--724.
\href{https://doi.org/10.4310/jdg/1740137450}{doi:10.4310/jdg/1740137450}.

\bibitem{MSY}
E. Milman, S. Shabelman, and A. Yehudayoff,
\emph{Fixed and periodic points of the intersection body operator},
Invent. Math. \textbf{241} (2025), 509--558.
\href{https://doi.org/10.1007/s00222-025-01342-z}{doi:10.1007/s00222-025-01342-z}.

\bibitem{Brock}
F. Brock,
\emph{Continuous Steiner-symmetrization},
Math. Nachr. \textbf{172} (1995), 25--48.
\href{https://doi.org/10.1002/mana.19951720104}{doi:10.1002/mana.19951720104}.

\bibitem{Schneider}
R. Schneider,
\emph{Convex Bodies: The Brunn--Minkowski Theory},
second expanded ed., Encyclopedia of Mathematics and its Applications, vol.~151,
Cambridge University Press, Cambridge, 2014.

\bibitem{LinXi}
Y. Lin and D. Xi,
\emph{Orlicz affine isoperimetric inequalities for star bodies},
Adv. Appl. Math. \textbf{134} (2022), Article 102308.
\href{https://doi.org/10.1016/j.aam.2021.102308}{doi:10.1016/j.aam.2021.102308}.

\bibitem{HowardHug}
R. Howard and D. Hug,
\emph{Smooth convex bodies with proportional projection functions},
Israel J. Math. \textbf{159} (2007), 317--341.
\href{https://doi.org/10.1007/s11856-007-0049-z}{doi:10.1007/s11856-007-0049-z}.

\bibitem{Coxeter}
H. S. M. Coxeter,
\emph{Regular Polytopes}, third ed., Dover Publications, New York, 1973.
\end{thebibliography}
\end{document}